\crefname{hypothesis}{Hypothesis}{Hypotheses}
\begin{document}

\newcommand\relatedversion{}


\title{\Large Symplectic Weyl Laws\relatedversion}
    \author{Daniel A. Cristofaro-Gardiner\thanks{University of Maryland, College Park (\email{dcristof@umd.edu}).}}

\date{}

\maketitle


\fancyfoot[R]{\scriptsize{Copyright \textcopyright\ 20XX by SIAM\\
Unauthorized reproduction of this article is prohibited}}






\begin{abstract}  We survey a number of Weyl type laws that have recently been established in low-dimensional symplectic geometry.  These have had a number of applications, which we also introduce.  We sketch a number of proofs so that the reader can get a sense of how these formulas are proved and how they can be applied. 
\end{abstract}

\section{Introduction.}

In 1911, Weyl proved the following influential result: 
let $\Omega \subset \mathbb{R}^d$ be a bounded domain, let $N(T)$ be the number of Laplace eigenvalues, with Dirichlet boundary conditions, not more than $T$ and counted with multiplicity, and let $\omega_d$ denote the volume of the unit ball.  Then
\begin{equation}
\label{eqn:weylc}
\lim_{T \to \infty} \frac{N(T)}{T^{d/2}} = \frac{\omega_d}{ (2 \pi)^d } \text{vol}(\Omega).
\end{equation}
The famous formula \eqref{eqn:weylc} is now known as ``Weyl's law" and has inspired generations of research. 

In recent years, some formulas reminiscent of \eqref{eqn:weylc} have been found in the context of low-dimensional symplectic and contact geometry and these have had important applications.  The aim of this article is to give an accessible account for how this works; an emphasis is providing outlines of key arguments when appropriate so that the reader can get a sense of how these laws are proved and how they can be used.  

The applications we discuss are to problems in dynamics, problems about the algebraic structure of certain groups of homeomorphisms of surfaces, questions about symplectic packing problems, and questions about ``hidden boundaries" of symplectic manifolds; it is our hope that anyone interested in any of these areas will find these notes helpful.  Many of the problems we discuss are stories in their own right and Weyl laws are just one component of the proof, so for brevity we mainly focus on those aspects of the proofs for which the Weyl laws are essential, trying to give a sense for other parts of the arguments when we can.

\begin{remark}
\label{rmk:minimal}
Though it is completely beyond the scope of these notes, we would be remiss if we did not mention that there is also a remarkable story about Weyl laws in the context of minimal surface theory, see e.g. \cite{lmn}, with many intriguing parallels. 
\end{remark}

\subsection{Outline of the article}

The first known symplectic Weyl laws were in dimensions $3$ and $4$ and a certain three-manifold invariant called embedded contact homology (ECH) is central to that story.  Thus we start by reviewing those developments and reviewing ECH; this is the topic of Section~\ref{sec:ch1} and our discussion here includes both the Well laws themselves and some relevant applications.  Next, there was a flurry of interest in the case of surfaces and this is the topic of Section~\ref{sec:ch2}: here the relevant Weyl laws involve a cousin of ECH called periodic Floer homology, as well as a kind of variant of Heegaard Floer cohomology leading to the construction of ``link spectral invariants".  
Next in Section~\ref{sec:ch3} we survey some of the most recent related developments; these range from applications of the subleading asymptotics to significant progress on the topological invariance in helicity.  We end by saying a few words about the subleading asymptotics and the situation in higher dimensions.


\section{The first symplectic Weyl laws and some of their applications.}
\label{sec:ch1}

The origins of the first symplectic Weyl laws are in the context of symplectic embedding problems.  Thus we first give a sense for the relevant questions in this area.  We then state the relevant results and explain some key applications.

\subsection{Symplectic capacity theory.}

Recall that a {\em symplectic manifold} is a pair $(X,\omega)$, where $X$ is a $2n$-dimensional manifold and $\omega$ is 
 a closed two-form such that the $n$-fold wedge-product $\omega^n$ is nowhere vanishing, called a {\em symplectic form}.   Symplectic manifolds are the natural setting for Hamilton's equations of motion  and therefore of fundamental importance.  We will see numerous applications to Hamiltonian dynamics in these notes.
 
 The following seemingly naive question turns out to be of central importance: {\em how big is $(X,\omega)$?}  One natural size measurement is the {\em volume} $\text{vol}(X,\omega) := \frac{1}{n!} \int_X \omega^n.$
 This is certainly an important quantity to consider.
 However, volume alone does not tell the whole story.  For example, 
 consider the {\em ball} and the {\em infinite cylinder}, defined by 
 \[ B_n(r) = \left \lbrace \pi \left(\frac{|z_1|^2}{r} + \ldots + \frac{|z_n|^2}{r}\right) < 1 \right \rbrace \subset \mathbb{C}^n, \quad \quad Z_n(R) =  \left \lbrace   \pi \frac{|z_1|^2}{R} < 1 \right \rbrace \subset \mathbb{C}^n.\]
These are symplectic manifolds: the symplectic form is given by identifying $\mathbb{C}^n = \mathbb{R}^{2n}$ via $z_i = x_i + y_i$ and restricting the {\em standard symplectic form} $\omega_{std} = dx_1 \wedge dy_1 + \ldots + dx_n \wedge dy_n$ on $\mathbb{R}^{2n}$.  
Then the volume of $B_n(r)$ is finite while the volume of $Z_n(R)$ is infinite,  however the famous Gromov nonsqueezing theorem states the following:

\begin{theorem} \cite{gromov}
\label{thm:gromov}
There is a symplectic embedding $B_n(r)  {\stackrel{s}\hookrightarrow} Z_n(R)$ if and only if $r \le R.$
\end{theorem}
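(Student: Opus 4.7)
The ``if'' direction is immediate: when $r \le R$ the set-theoretic inclusion $B_n(r) \subset Z_n(R)$ as subsets of $(\mathbb{C}^n,\omega_{std})$ is already a symplectic embedding.

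For the ``only if'' direction, I would follow Gromov's original pseudoholomorphic curves strategy. Suppose $\phi: B_n(r) \hookrightarrow Z_n(R)$ is a symplectic embedding. Since $\phi(B_n(r))$ is compact, only a bounded portion of $Z_n(R)$ is actually used, so the first step is to pass to a closed symplectic compactification. I would embed the relevant piece of the cylinder symplectically into the product $(M,\omega_M) = S^2(R) \times S^2(L) \times \cdots \times S^2(L)$, where $S^2(a)$ denotes the two-sphere with area form of total area $a$ and $L$ is chosen so large that the image of $\phi$ still embeds.

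The central step is to produce a pseudoholomorphic sphere of total symplectic area $R$ through the distinguished point $p = \phi(0)$. I would choose an $\omega_M$-compatible almost complex structure $J$ on $M$ which agrees with $\phi_* J_{std}$ on $\phi(B_n(r))$, and consider the moduli space of $J$-holomorphic spheres in the class $A$ represented by $S^2(R) \times \{p_2\} \times \cdots \times \{p_n\}$. For the split integrable structure this moduli space manifestly sweeps out every point of $M$; the key claim is that the evaluation map remains surjective as one deforms to our chosen $J$. This relies on Gromov's compactness theorem together with the fact that $A$ is minimal in the sense that no sphere bubbles can carry strictly less symplectic area, so pathological limits are ruled out. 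The outcome is a $J$-holomorphic sphere $u: S^2 \to M$ through $p$ with $\int_{S^2} u^*\omega_M = R$.

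The conclusion then follows from monotonicity. Pulling back $u$ by $\phi^{-1}$ gives a proper $J_{std}$-holomorphic curve in $B_n(r)$ passing through the origin. The monotonicity formula for minimal surfaces (pseudoholomorphic curves are minimal with respect to the compatible K\"ahler metric) bounds its symplectic area below by the area of a flat complex disk of radius $\sqrt{r/\pi}$ through the center of the ball, which in the normalization of the statement equals $r$. Combining with the global bound $\text{area}(u) = R$ gives $r \le R$. The main obstacle is the construction and persistence of the holomorphic sphere through the prescribed point: this is where genuinely new analysis is required, and it demands a careful bubbling analysis ruling out degeneration into classes of smaller symplectic area. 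Monotonicity is comparatively routine once the sphere is in hand, though it is the step that actually converts the existence of a holomorphic curve into the desired symplectic inequality.
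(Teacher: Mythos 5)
The paper does not actually prove Theorem~\ref{thm:gromov}; it is quoted from Gromov's paper \cite{gromov}, and your sketch is precisely Gromov's original pseudoholomorphic-curve argument, so you are following the intended route. Both halves of your outline are sound: the existence of a $J$-holomorphic sphere in the class $A=[S^2(R)\times \mathrm{pt}]$ through $\phi(0)$ for a $J$ extending $\phi_*J_{std}$, and then the monotonicity (Lelong) inequality applied to the proper $J_{std}$-holomorphic curve $\phi^{-1}(u)\cap B_n(r)$ through the origin, which with the paper's normalization (Euclidean radius $\sqrt{r/\pi}$) gives area at least $r$, hence $r\le R$. One caveat concerns your compactification by a product of spheres $S^2(R)\times S^2(L)^{\,n-1}$: the assertion that no bubble can carry less area than $A$ is not automatic there, because a spherical class $(a_1,\dots,a_n)$ with some $a_i<0$ for $i\ge 2$ and $a_1$ large positive can have symplectic area strictly between $0$ and $R$, so energy considerations alone do not exclude degenerate limits; positivity of intersection with the divisors $\{z_i=\mathrm{const}\}$ is unavailable since those are not $J$-holomorphic for your $J$. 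The standard fix is to compactify the $\mathbb{R}^{2n-2}$ factor by a large torus $T^{2n-2}$ instead: then every spherical class in $S^2\times T^{2n-2}$ is an integer multiple of $A$, so a stable limit in class $A$ must have a single nonconstant component and bubbling is ruled out immediately. With that adjustment (or with a direct argument excluding the exotic classes along the chosen path of almost complex structures), your proof is complete.
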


In other words, the volume measurement alone does not capture the ``symplectic" size of the ball and the cylinder.  One would therefore like to know the following: {\em What other kinds of symplectic size measurements exist?}   To put this in a more axiomatic framework, let us define a {\em symplectic capacity in dimension $2d$} to be a rule $c$ that assigns to each symplectic $2d$ dimensional manifold $(X,\omega)$ a number $c(X,\omega) \in \mathbb{R}_{\ge 0} \cup \lbrace \infty \rbrace$, subject to the {\em Monotonicity Axiom}, which says that the existence of a symplectic embedding $(M_1,\omega_1)  {\stackrel{s}\hookrightarrow} (M_2,\omega_2)$ of $2d$-dimensional symplectic manifolds implies that 
\begin{equation}
\label{eqn:mono}
c(M_1,\omega_1) \le c(M_2,\omega_2),
\end{equation}
and the {\em Scaling Axiom}, which says that $c(X, \alpha \cdot \omega ) = \alpha c (X,\omega)$ for any $\alpha > 0$.  To distinguish from quantities derived from the volume measurement, we will call such a symplectic capacity {\em Non-Trivial} if $c(Z_{d}) < \infty$.  The following is the first example of a symplectic capacity:

\begin{example}
\label{ex:gromov}
Let $(X,\omega)$ be a $2d$-dimensional symplectic manifold.   Define the {\em Gromov Width}
\[ c_{Gr}(X,\omega) = sup \lbrace r | B_d(r)  {\stackrel{s}\hookrightarrow} (X,\omega) \rbrace.\]
It is immediate from the definition that $c_{Gr}$ satisfies the Monotonicity and Scaling axioms.  The Non-Triviality axiom follows from Theorem~\ref{thm:gromov}.  Thus, the Gromov Width is a non-trivial symplectic capacity.
\end{example}

\subsection{ECH capacities.}
\label{sec:packing}

We saw our first example of a symplectic capacity in Example~\ref{ex:gromov}.  In fact there are many more.  For example, the Gromov width can be generalized from the ball to any open domain; these are called {\em embedding capacities}.
 For more examples of capacities, we refer the reader to the beautiful article \cite{qsg}.

There is a particularly interesting family of symplectic capacities of four-manifolds, called {\em ECH capacities}.  The ECH capacities are a sequence
\[ 0 = c_0(X,\omega) \le c_1(X,\omega) \le c_2(X,\omega) \le \ldots \le \infty, \]
associated to any symplectic $4$-manifold, defined by Hutchings \cite{qech}.   We will defer the definition until \S\ref{sec:defnech} and first focus on some key properties.   

First of all, the ECH capacities are {\em strong} invariants, in that they give informative symplectic size measurements.  For example, as an important test case, let us consider {\em symplectic ball packing problems}, i.e.. the question of whether $(M_1,\omega_1)  {\stackrel{s}\hookrightarrow} (M_2,\omega_2)$ when $M_1$ is a disjoint union of symplectic balls.  We can in particular study the {\em symplectic ball-packing numbers} $p_k(M,\omega)$ of a $2n$-dimensional symplectic manifold; this is the maximum ratio of the volume of $(M,\omega)$ that can be filled by $k$ equally sized symplectic balls.  These can be very subtle.  For example, the following are the seemingly mysterious packing numbers of the four-ball.
 First of all $p_k(B_4) = 1$ for $k \ge 9$.  For $k \le 8$, we have:
\[
\begin{array}{c|cccccccc}
k & 1 & 2 & 3 & 4 & 5 & 6 & 7 & \geq 8 \\ \hline
p_k(B_4) & 
1 & \tfrac{1}{2} & \tfrac{3}{4} & 1 & 
\tfrac{20}{25} & \tfrac{24}{25} & \frac{63}{64} & \tfrac{288}{289}
\end{array}
\]
In fact, the ECH capacities are strong enough to see this: \cite{qech} shows that if $(M_1,\omega_1) = B_4(a_1) \sqcup \ldots \sqcup B_4(a_n)$ and $(M_2,\omega_2) = B_4(b)$, 
there is a symplectic embedding $(M_1,\omega_1)  {\stackrel{s}\hookrightarrow} (M_2,\omega_2)$
if and only if 
$c_k(M_1,\omega_1) \le c_k(M_2,\omega_2) \quad$ for all $k$.


The second important property of ECH capacities are that they are {\em computable}, in examples of interest.  For example, define the {\em symplectic ellipsoid} $E(a,b) = \left \lbrace \pi \frac{ |z_1|^2}{a} + \pi \frac{ |z_2|^2}{b} < 1 \right \rbrace \subset \mathbb{C}^2,$
where the symplectic form is given by the restriction of the standard form, as above.  These generalize the examples from Gromov non-squeezing, since $B_4(a) = E(a,a)$ and $Z_4(R) = E(R,\infty)$. 

\begin{proposition} \cite{qech}
\label{prop:echell}
$c_k(E(a,b))$ is the $(k+1)^{st}$ smallest entry in the matrix $(ma + nb)_{ (m,n) \in \mathbb{Z}_{\ge 0} \times \mathbb{Z}_{\ge 0} }$.  
\end{proposition}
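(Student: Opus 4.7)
The plan is to work directly on the contact boundary $\partial E(a,b)$, since the ECH capacities of a Liouville domain such as an ellipsoid are encoded in the embedded contact homology of its boundary together with the action filtration and the $U$-map. By the Monotonicity Axiom and continuity of both sides in $(a,b)$, it suffices to handle the case where $a/b$ is irrational; the rational case then follows by a limiting argument.

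In the irrational case, the Reeb vector field on $\partial E(a,b)$ associated to the standard contact form has exactly two simple closed orbits: the circle $e_1 = \{z_2 = 0\} \cap \partial E(a,b)$ of action $a$, and $e_2 = \{z_1 = 0\} \cap \partial E(a,b)$ of action $b$. Every other closed orbit is an iterate of one of these, and both are elliptic. The ECH chain complex is generated by finite formal products of Reeb orbits (with hyperbolic orbits forbidden from repeating), so the generators here are in bijection with pairs $(m,n) \in \mathbb{Z}_{\ge 0} \times \mathbb{Z}_{\ge 0}$, and the symplectic action of $e_1^m e_2^n$ is $ma + nb$.

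The next step is to compute the ECH index of $e_1^m e_2^n$. Using Hutchings' index formula --- which combines Conley-Zehnder indices of iterated elliptic orbits with rotation numbers determined by $a/b$, a relative Chern class, and a self-intersection term --- one shows that all gradings are even and that ordering the generators by ECH index agrees with ordering them by action $ma + nb$. In particular, the differential vanishes for parity reasons, so the chain complex is its own homology. One then invokes the definition of the ECH capacities as spectral invariants: $c_k$ is the infimum of actions of classes $\sigma$ whose $k$-fold image under the degree $-2$ map $U$ is the class of the empty orbit set. Since in this example $\partial E(a,b)$ has the ECH of $S^3$, where $U$ acts nontrivially between consecutive gradings, $c_k(E(a,b))$ equals the action of the generator in grading $2k$, which by the ordering statement is exactly the $(k+1)$-st smallest entry of the matrix $(ma + nb)$.

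The main obstacle is the combinatorial lemma underlying the index step: Conley-Zehnder indices of iterated elliptic orbits involve sums of floor functions of $ka/b$ and $kb/a$, and one must verify that these floor sums, combined with the relative Chern class and self-intersection contributions, line up so that the grading order matches the action order. A secondary but nontrivial point is confirming that $U$ is surjective onto each grading level, which is where the identification with ECH of $S^3$ --- and the prior knowledge of its $U$-map structure --- is essential.
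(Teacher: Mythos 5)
Your proposal is correct and follows essentially the same route as the paper's own sketch: reduce to irrational $b/a$ by continuity, identify the two elliptic simple orbits and the generators $(m,n)$ with action $ma+nb$, show the ECH index is even so the differential vanishes, and match the grading order to the action order (which the paper handles via the lattice-point interpretation of the index formula, yielding exactly one generator in each even grading). The $U$-map point you flag is likewise how the paper implicitly concludes that $c_k$ is the action of the unique grading-$2k$ generator.
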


\begin{example}
\label{ex:ball}
The ECH capacities $c_k$ of the ball are given by
\[ 0, 1, 1, 2, 2, 2, 3, 3, 3, 3, \ldots .\]
\end{example}

\subsection{The ECH capacity Weyl law.}

As we explained in the previous section, the ECH capacities are strong, computable embedding obstructions.  The most basic embedding obstruction is the volume obstruction, so one could ask: {\em are the ECH capacities strong enough to detect volume?}

The following theorem gives the state of the art of this.  An important class of symplectic 4-manifolds are ``Liouville domains". To define these, let us recall that a {\em contact form} on a $(2n+1)$-dimensional manifold is a one-form $\lambda$ such that $\lambda \wedge d \lambda^n$ is nowhere vanishing.  Contact forms are odd-dimensional cousins of symplectic forms.   A {\em Liouville domain} is a compact exact symplectic manifold $(X,\omega)$ such that $\omega = d \lambda$, where $\lambda|_{\partial X}$ is a contact form compatible with the positive boundary orientation.  For example, balls, cylinders and ellipsoids are all Liouville domains.

We now state our first symplectic Weyl law:

\begin{theorem}\cite{cghr}
\label{thm:cghr}
Let $(X,\omega)$ be a four-dimensional Liouville domain with all ECH capacities finite.  Then
\begin{equation}
\label{eqn:weyl}
\lim_{k \to \infty} \frac{c^2_k(X,\omega)}{k} = 2 \int_X \omega \wedge \omega.
\end{equation}
\end{theorem}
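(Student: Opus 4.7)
The plan is to reduce to ellipsoids, where Proposition~\ref{prop:echell} makes everything explicit. First I would verify the formula on a single ellipsoid $E(a,b)$: by Proposition~\ref{prop:echell}, $c_k(E(a,b))$ is the $(k{+}1)$st smallest value of $ma+nb$ over $(m,n)\in\mathbb{Z}_{\ge 0}^2$, so a Gauss-style lattice-point count in the triangle $\{ax+by\le L,\ x,y\ge 0\}$, whose area is $L^2/(2ab)$, gives $c_k(E(a,b))^2 \sim 2ab\,k$. Since $\int_{E(a,b)}\omega\wedge\omega = ab$, this matches the claimed formula exactly.

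Next I would extend to finite disjoint unions of ellipsoids. Hutchings' additivity formula $c_k(X_1\sqcup X_2)=\max_{i+j=k}\bigl(c_i(X_1)+c_j(X_2)\bigr)$ combined with the ellipsoid case reduces the question to an asymptotic analysis of a max-plus convolution, which a short Lagrange-multiplier calculation handles: if each piece $(X_\alpha,\omega_\alpha)$ obeys the Weyl asymptotic with leading constant $2v_\alpha$, where $v_\alpha=\int_{X_\alpha}\omega\wedge\omega$, then $\bigsqcup_\alpha X_\alpha$ does too, with leading constant $2\sum_\alpha v_\alpha=2\int\omega\wedge\omega$.

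For the lower bound on a general Liouville domain, I would invoke the dimension-four symplectic packing technology (extending McDuff--Polterovich and Biran): for every $\epsilon>0$, there exists a symplectic embedding into $(X,\omega)$ of some finite disjoint union $\bigsqcup_\alpha E_\alpha$ of ellipsoids with $\sum_\alpha\int_{E_\alpha}\omega\wedge\omega\ge(1-\epsilon)\int_X\omega\wedge\omega$. Monotonicity~\eqref{eqn:mono}, combined with the previous step applied to the inner packing, then gives $\liminf_{k\to\infty} c_k(X,\omega)^2/k \ge (1-\epsilon)\cdot 2\int_X\omega\wedge\omega$, and one sends $\epsilon\to 0$.

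The upper bound will be the main obstacle, because a general Liouville domain does not symplectically embed into any disjoint union of ellipsoids with near-optimal volume, so the same sandwich argument fails in that direction. One route I would try is to first establish the theorem for a dense class of ``concrete'' domains---for instance convex or concave toric domains, where explicit combinatorial formulas in the spirit of Proposition~\ref{prop:echell} are available and allow a matching upper bound---and then approximate a general Liouville domain by such pieces in a way that controls each $c_k$ from above. A more analytic alternative is through Taubes' isomorphism between ECH and Seiberg--Witten Floer cohomology: the capacities $c_k(X,\omega)$ become spectral invariants of a family of perturbed Dirac operators on $\partial X$, and the Weyl law becomes a Weyl asymptotic for those spectral invariants with leading coefficient exactly $2\int_X\omega\wedge\omega$. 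Either way, making the upper bound's leading coefficient match the lower bound's is where I expect the genuinely difficult work to lie.
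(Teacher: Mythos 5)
Your ellipsoid computation, the max-plus analysis of the Disjoint Union Axiom, and the packing argument for the lower bound are all correct, and they reproduce the elementary half of the argument (the survey runs the model computation with balls rather than ellipsoids, which is immaterial). The gap is exactly where you locate it: the upper bound is asserted to be hard and then left unproved, with only two candidate strategies named. Of these, route (a) --- approximation by convex or concave toric domains --- cannot work for a general Liouville domain, whose interior need not be symplectomorphic to, or approximable from the outside by, any subdomain of $\mathbb{R}^4$ (it may have nontrivial topology), so there is no dense class of combinatorially computable domains to reduce to. You also missed the elementary fix that \emph{does} close the upper bound in the special case $X\subset\mathbb{R}^4$ with smooth boundary: embed $X$ into a large ball $B$ and fill $B\setminus X$ by disjoint balls up to volume $\epsilon$; Monotonicity and the Disjoint Union Axiom then give $\max_{i+j=k}\bigl(c_i(X)+c_j(\sqcup\,\text{balls})\bigr)\le c_k(B)$, and since the asymptotics of every term except $c_i(X)$ are known, the matching upper bound drops out. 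That is how the survey completes the bounded-domain case.

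For a general Liouville domain the correct route is indeed your route (b), but naming the Taubes isomorphism is not a proof. In \cite{cghr} the theorem is deduced from the contact Weyl law (Theorem~\ref{thm:contactweyl}) applied to $Y=\partial X$: normalizing $gr([\emptyset])=0$, the capacity $c_k$ is the spectral invariant of a class of grading $2k$, and Stokes' theorem converts $\int_Y\lambda\wedge d\lambda$ into $2\int_X\omega\wedge\omega$. All the analytic work is in the contact Weyl law itself: one tracks the quantity $v(r)=-2\mathcal{F}_r(p(r))/r$ along a family of critical points of the perturbed Chern--Simons--Dirac functional attached to the class $\sigma$, shows via Taubes' estimates that $v(r)$ converges to $c_\sigma(\lambda)$ as $r\to\infty$, evaluates $v$ at small $r$ using the explicit reducible solutions together with a spectral-flow estimate relating the grading to the perturbation parameter, and controls $dv/dr$ in the intermediate regime by ODE comparison arguments. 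No version of this analysis, nor any substitute for it, appears in your proposal, so the two-sided asymptotic for general Liouville domains remains unproved.
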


We will try to give a sense of the proof later in the notes; it is involved and requires explaining quite a bit more about ECH.  For now, however, let us give a sketch of the proof in an important and more accessible special case reminiscent of the setup considered by Weyl.  To give the proof, we will need another axiom of ECH capacities, namely the Disjoint Union Axiom, which says that for a disjoint union $X \sqcup Y$, we have
\begin{equation}
\label{eqn:disjoint}
c_k(X \sqcup Y) = \max_{i + j = k} c_i(X) + c_j(Y).
\end{equation}

\begin{theorem}\cite{qech}
The Weyl law \eqref{eqn:weyl} holds in the special case where $X \subset \mathbb{R}^4$ is a bounded domain with smooth boundary.
\end{theorem}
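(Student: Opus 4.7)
The plan is to sandwich $c_k(X)$ between ECH capacities of disjoint unions of ellipsoids, for which Proposition~\ref{prop:echell} and the Disjoint Union Axiom~\eqref{eqn:disjoint} produce explicit asymptotics. For a single ellipsoid $E(a,b)$, Proposition~\ref{prop:echell} identifies $c_k(E(a,b))$ with the $(k+1)^{st}$ smallest element of $\{ma+nb:m,n\in\mathbb{Z}_{\geq 0}\}$, and a standard lattice point count gives $\#\{(m,n)\in\mathbb{Z}_{\geq 0}^2:ma+nb\leq T\}\sim T^2/(2ab)$ (the area of the relevant triangle); inverting yields $c_k(E(a,b))\sim\sqrt{2abk}$. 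Since $\int_{E(a,b)}\omega\wedge\omega=ab$, this is exactly the Weyl law for ellipsoids.

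Next, for a finite disjoint union $U=\bigsqcup_{j=1}^N E(a_j,b_j)$, iterating \eqref{eqn:disjoint} gives $c_k(U)=\max\{\sum_j c_{i_j}(E(a_j,b_j)):\sum_j i_j=k\}$. Plugging in the ellipsoid asymptotics and optimizing by Lagrange multipliers (optimum at $i_j\sim k\cdot a_jb_j/\sum_l a_lb_l$) yields $c_k(U)\sim\sqrt{2k\sum_j a_jb_j}$, hence $c_k(U)^2/k\to 2\int_U\omega\wedge\omega$. When the $N$ pieces are all equal, the optimum is the uniform partition $i_j=k/N$ by Cauchy--Schwarz.

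Finally, for a general bounded smooth $X\subset\mathbb{R}^4$, I would handle the two bounds separately, by symplectic packing. For the lower bound, given $\varepsilon>0$ one packs $X$ from the inside by a disjoint union $U_-\hookrightarrow X$ of ellipsoids with $\int_{U_-}\omega\wedge\omega\geq(1-\varepsilon)\int_X\omega\wedge\omega$; Monotonicity and the disjoint-union formula then give $\liminf_k c_k(X)^2/k\geq 2(1-\varepsilon)\int_X\omega\wedge\omega$. For the upper bound, one uses a ``many copies'' construction: pack $N$ disjoint rescaled copies of $\delta X$ into a single ellipsoid $E$ with $N\delta^4\int_X\omega\wedge\omega\geq(1-\varepsilon)\int_E\omega\wedge\omega$. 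Using the Scaling Axiom $c_m(\delta X)=\delta^2 c_m(X)$ together with the uniform-partition optimum from the previous paragraph, one obtains $N\delta^2 c_m(X)\leq c_{mN}(E)$, so $c_m(X)^2/m\leq c_{mN}(E)^2/(mN^2\delta^4)$; letting $m\to\infty$ and applying the ellipsoid asymptotic yields $\limsup_m c_m(X)^2/m\leq 2\int_X\omega\wedge\omega/(1-\varepsilon)$. Sending $\varepsilon\to 0$ concludes the proof.

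The main obstacle I anticipate is the geometric packing claim in the final step: bounded smooth domains in $\mathbb{R}^4$ must be shown to admit both interior packings by disjoint ellipsoids and ``many copies'' packings of rescalings of $X$ into a single ellipsoid, each to density arbitrarily close to $1$. In four dimensions this is supplied by symplectic packing stability, but it is a nontrivial geometric input separate from the ECH arithmetic driving the earlier steps; in contrast, Steps~1 and~2 are essentially pure combinatorics once the computability of ECH capacities for ellipsoids is granted.
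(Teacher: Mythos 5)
Your Step 1 (exact ellipsoid asymptotics from Proposition~\ref{prop:echell} plus the Lagrange-multiplier optimization of the disjoint-union formula \eqref{eqn:disjoint}) and your lower bound are essentially the paper's Steps 1--2, with ellipsoids in place of balls; the interior filling you need there is elementary (a Vitali-type filling of the open set $X$ by disjoint round balls of varying radii, which are honest symplectic balls in $(\mathbb{R}^4,\omega_{std})$), and is not ``packing stability'' in the technical sense. That half of the argument is fine.

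The upper bound is where you diverge from the paper, and it has a genuine gap. Your ``many copies'' step needs a symplectic packing of an ellipsoid $E$ by $N$ \emph{equal} rescaled copies of $\delta X$ with density tending to $1$. This is not supplied by packing stability, which concerns filling a target by equal \emph{balls}; filling a ball by equal copies of an arbitrary smooth domain $X$ is a different (and, as far as I know, unavailable) statement. The elementary tiling of $E$ by congruent cubes each containing a copy of $\delta X$ only achieves density $\mathrm{vol}(X)/\mathrm{vol}(Q_X)$, a constant strictly less than $1$, which ruins the sharp constant in \eqref{eqn:weyl}; and if you allow copies at different scales (which Vitali does provide), the uniform partition $i_j=m$ is no longer optimal and the Lagrange-multiplier choice $i_j\propto\delta_j^4$ requires the asymptotics of $c_i(X)$ that you are trying to prove --- the argument becomes circular. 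The paper's Step 3 sidesteps all of this: embed $X$ into one large ball $B$, almost fill the \emph{complement} $B\setminus \overline{X}$ by disjoint balls (again elementary Vitali), and apply Monotonicity \eqref{eqn:mono} and \eqref{eqn:disjoint} to $X\sqcup(\text{filler})\hookrightarrow B$; since the capacities of $B$ and of disjoint unions of balls are known exactly (Example~\ref{ex:ball} and Step 1), one can isolate an upper bound on $c_i(X)$ for the well-chosen index split. I would replace your upper-bound step with this complement-filling argument, or else supply a proof of the equal-copies packing claim, which I do not see how to do with the tools at hand.
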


\begin{proof}[Sketch of proof]

{\em Step 1.}  One first verifies by direct computation using the result of Example~\ref{ex:ball} that the Weyl law holds for any ball.  Thus, by the Disjoint Union Axiom \eqref{eqn:weyl} and an elementary argument, the Weyl Law holds for any disjoint union of balls.

{\em Step 2.}  Now, given $X$, we can fill $X$ by a disjoint union of balls, up to an error of $\epsilon > 0$ in the volume.  Thus by applying the Weyl Law for disjoint unions of balls established in Step 1, and the Monotonicity Axiom \eqref{eqn:mono}, we get that   
\[ \lim_{k \to \infty} \frac{c^2_k(X,\omega)}{k} \ge 2 \int_X \omega \wedge \omega,\]
since $\epsilon$ was arbitrary.

{\em Step 3.}  We can similarly include $X$ into a large ball, and fill the complement of $X$ in this ball by balls, up to an error of $\epsilon > 0$ in the volume.  Thus by again applying the Weyl Law from Step 1 and the Monotonicity Axiom, as in Step 2, we get the upper bound that matches the lower bound from Step 2, hence \eqref{eqn:weyl}.

\end{proof}

\subsection{Embedded Contact Homology}
\label{sec:defnech}

To say more about how the ECH capacities $c_k$ are defined and to give a sense of why they satisfy their key properties, we now introduce an invariant of contact three-manifolds, called {\em embedded contact homology}, defined by Hutchings.    As we aim only to give a brief overview, we do not always give full details; we refer the reader to \cite{echlecture} for more detail about any of the terms below.

Let $(Y,\lambda)$ be a closed three-manifold with contact form. 
The contact form $\lambda$ defines a canonical vector field, the {\em Reeb vector field} $R$, by the equations
\[ d \lambda(R,\cdot) = 0, \quad \quad \lambda(R) = 1.\]
The (unparametrized) periodic orbits of $R$ are called {\em Reeb orbits}.  The contact form also defines a {\em contact structure} $\xi := \text{Ker}(\lambda).$  The embedded contact homology $ECH(Y,\lambda)$ (with $\mathbb{Z}/2\mathbb{Z}$-coefficients) is the homology of a chain complex $ECC(Y,\lambda,J)$.  The chain complex is freely generated by finite sets $\lbrace (\alpha_i,m_i ) \rbrace$, called {\em ECH generators}, where the $\alpha_i$ are distinct embedded Reeb orbits and the $m_i$ are positive integers, such that $m_i = 1$ if $\alpha_i$ is a hyperbolic orbit; 
here, we are assuming the contact form $\lambda$ is nondegenerate, which holds generically, and we refer to \cite{echlecture} for the relevant definitions.  

To define the chain complex differential $\partial$ we need to review Gromov's theory of pseudoholomorphic curves \cite{gromov}.  Let $(X,\omega)$ be any symplectic manifold.  Gromov observed that the existence of the symplectic structure implies that there always exists an {\em almost complex structure}, i.e. a bundle map $J : TX \to TX$ such that $J^2 = -1$.  Moreover, the space of all such $J$ is non-empty and contractible if we assume in addition that $J$ is {\em compatible with a symplectic form}, meaning that the rule $g(u,v) = \omega(u,Jv)$ defines a Riemannian metric.  Gromov then showed that it is profitable to study {\em $J$-holomorphic maps}, namely maps $u : (\Sigma,j) \to (X,J)$ satisfying the equation
\begin{equation}
\label{eqn:jhol}
du \circ j = J \circ du;
\end{equation}
here, $(\Sigma,j)$ is a Riemann surface.  
Returning to the problem at hand, we can take $X = \mathbb{R}_s \times Y$ and then the contact structure defines a symplectic form by the rule $d(e^s \lambda)$; we call this symplectic manifold the {\em symplectization} of $(Y,\lambda)$.  The almost complex structures we will want to study are 
{\em $\lambda$-compatible} 
 in the sense that they send $\partial_s$ to $R$, fix $\xi$, compatibly with $d \lambda$, and are $\mathbb{R}$-invariant; such a $J$ is automatically compatible with $d(e^s \lambda)$.  We will want our Riemann surfaces to be finitely punctured
 and we demand that the map $u$ is asymptotic to Reeb orbits at the punctures.
We now define the chain complex differential $\partial$ by the rule
\[ \langle  \partial \alpha, \beta \rangle := \# \mathcal{M}^{I=1}_J(\alpha,\beta).\]
To elaborate, here $\mathcal{M}$ is the space of $J$-holomorphic currents, modulo translation in the $\mathbb{R}$ direction, and $J$ is chosen generically; a $J$-holomorphic current is a map $u$ satisfying \eqref{eqn:jhol}, modulo equivalence of currents; and, the condition $\alpha$ means that $\alpha$ gives the asymptotics of $u$ as $s \to \infty$, and analogously for $\beta$ as $s \to - \infty$.

Finally, to explain the condition $I  = 1$, we need to introduce the {\em ECH index} $I$ given by the formula
\begin{equation}
\label{eqn:defnech}
I(C) =  c_\tau(C) + Q_\tau(C) + CZ^I_\tau(\alpha)  - CZ^I_{\tau}(\beta).
\end{equation}
We refer to \cite{echlecture} for the precise definitions, noting that the key property of the ECH index for our purposes is that low index ECH curves tend to be embedded, which is the genesis of the name of the theory.  In particular, one can show that an $I = 1$ current has a single embedded component that is rigid modulo translation, and all other (possibly empty) components are covers of $\mathbb{R}$-invariant cylinders over orbit sets.

A proof that $\partial$ is well-defined and satisfies $\partial^2 = 0$ can be found in \cite{d2=0}.   Hence, the homology $ECH(Y,\lambda,J)$ is defined.  Moreover, Taubes has shown \cite{taubes} that there is a canonical isomorphism
\begin{equation}
\label{eqn:echswf}
ECH(Y,\lambda,J) \simeq \widehat{HM}(Y)
\end{equation}
between embedded contact homology and the {\em Seiberg-Witten Floer cohomology} from \cite{km}.  This implies, in particular, that ECH is independent of the choice of almost complex structure $J$, and even independent of the contact form.  
One consequence of the fact that the isomorphism is canonical is we can speak of a well-defined ECH class $\sigma$, independent of any choices.  In the next section we will associate a ``spectral invariant" to such a class that will play a key role.
   
\subsection{ECH spectral invariants, the $U$-map, and the definition of ECH capacities}
\label{sec:spectral}

To define the ECH capacities, we first explain a construction for associating a nonnegative real number to a nonzero class in ECH.  The construction makes use of the fact that ECH has a canonical filtration induced by the {\em action} of an ECH generator $\alpha$
\[ \mathcal{A}(\alpha) = \sum_i m_i \int_{\alpha_i} \lambda.\]
The construction also serves as a model for extracting numerical invariants from filtered Floer theory, a theme we will return to in different contexts throughout these notes

To elaborate, it is not hard to show, using the equation \eqref{eqn:jhol}, that the differential $\partial$ decreases $\mathcal{A}$, so we can define $ECH^L$ to be the homology of the subcomplex generated by ECH generators with action $\le L$, and there is then an inclusion induced map $ECH^L \to ECH.$
Now if $\lambda$ is nondegenerate and  $0 \ne \sigma \in ECH(Y,\lambda)$, we can define the associated {\em spectral invariant} to be the first filtration level where $\sigma$ appears, i.e. the smallest $L$ such that $\sigma$ is in the image of above map.  Concretely, this means that 
$c_{\sigma}(\lambda)$ is the minimum amount of action required to represent the class $\sigma$: we look at all possible cycles representing $\sigma$ and take the action of the cycle requiring the least amount of action to represent $\sigma$.  When $\lambda$ is degenerate, one can still define $c_{\sigma}(\lambda)$ by approximating with nondegenerate forms, even in the $C^0$ topology, and taking a limit.  The reason that this is well-defined is due to the existence of ECH cobordism maps and we will say a few more words about this when we discuss some of the properties of the ECH capacities below.

To define ECH capacities, the idea is then to pick out a distinguished sequence of classes $\sigma$ and look at the associated spectral invariants.  To do this, we need to introduce some additional structures on $ECH$.  First of all, it follows by an easy exercise that the empty orbit set $\emptyset$ is a cycle, so there is a canonical (possibly vanishing) class in ECH given by $[\emptyset]$.  Next, there is a map 
\[ U : ECH(Y,\lambda) \to ECH(Y,\lambda),\]
which is defined analogously to $\partial$, except that it counts $I = 2$ currents through a marked point $(0,z)$ chosen away from all Reeb orbits; it agrees with an analogous $U$-map on $HM$ via the isomorphism \eqref{eqn:echswf}.  One can now define the {\em ECH spectrum} $c_k(Y,\lambda) = min \lbrace c_\sigma(\lambda) | U^k \sigma = [ \emptyset ], \sigma \ne 0 \rbrace.$
Finally, given a Liouville domain $(X,\omega)$ we define 
\[ c_k(X,\omega) = c_k(Y,\lambda) \]
where $Y = \partial X$ and $\lambda$ is any primitive of $\omega$ that restricts to $Y$ as a contact form; it is not hard to see that this is well-defined \cite{qech}.  

Let us give a sense for why the Monotonicity and Scaling Axioms for symplectic capacities hold:

\begin{proof}[Sketch of Scaling and Monotonicity]

The Scaling Axiom is an easy consequence of the fact that there is a canonical chain complex isomorphism $ECC(Y,r \cdot \lambda,J) = ECC(Y, \lambda,J)$.  

As for Monotonicity, this is much deeper.  It is a consequence of the isomorphism \eqref{eqn:echswf} that an oriented cobordism from $Y_1$ to $Y_2$ induces a map on ECH, since this is known to hold for $\widehat{HM}$ by work of Kronheimer-Mrowka \cite{km}. Hutchings-Taubes prove that if this is a  {\em weakly exact symplectic} cobordism $(X,\omega)$ from one contact manifold $(Y_1,\lambda_1)$ to another $(Y_2,\lambda_2)$, i.e $\omega$ is exact and its restriction to the boundary satisfies $\omega = d \lambda_i$,
then the associated cobordism map counts $J$-holomorphic buildings, in the sense that a nonzero cobordism map coefficient from an ECH generator $\alpha$ to another generator $\beta$ implies the existence of a $J$-holomorphic building from $\alpha$ to $\beta$; a $J$-holomorophic building is a slight generalization of a $J$-holomorphic current, where one is allowed to have multiple levels, analogously to a ``broken flow line" in Morse homology.  It then follows from this $J$-holomorophic curve property, by the same argument as above used to show that the chain complex differential decreases action, that the cobordism map decreases action.  The Monotonicity Axiom now follows from the fact that given a symplectic embedding $(X_1,\omega_1)   {\stackrel{s}\hookrightarrow} (\text{int}(X_2),\omega_2)$, with $\text{int}$ denoting the interior, one can remove the image of the interior of $X_1$ to get a symplectic cobordism from $\partial X_2$ to $\partial X_1$.
\end{proof}

We remark that the continuity of the ECH spectral invariants under $C^0$ deformation of the contact form mentioned above, used to define spectral invariants in the degenerate case, is proved by a very similar argument.



Let us now briefly outline the proof of Proposition~\ref{prop:echell}; note that the proposition implies the Non-Triviality Axiom, since the infinite cylinder $Z$ is an $E(1,\infty)$.  

\begin{proof}[Sketch of proof of Proposition~\ref{prop:echell}]
Let $b/a$ be irrational.  One first computes that $E(a,b)$ has exactly two embedded Reeb orbits, namely $\gamma_1 = \lbrace z_2 = 0 \rbrace$ and $\gamma_2 = \lbrace z_1 = 0 \rbrace$, both are elliptic, and the contact form is nondegenerate; $\gamma_1$ has action $a$ and $\gamma_2$ has action $b$.   Thus the ECH generators are all of the form $\alpha = \lbrace (\gamma_1, m), (\gamma_2, n) \rbrace$.  One next computes that the ECH index of such a generator is given by
\begin{equation}
\label{eqn:Iellip}
 I(\alpha,\emptyset) = m + n + 2mn + \sum^m_i(2 \lfloor i a/b \rfloor + 1) +  \sum^n_j(2 \lfloor j b/a \rfloor + 1).
 \end{equation}
for the details, see \cite{echlecture}; here we are using the fact that there is a unique relative homology class between any two orbit sets in $S^3$ in order to refer to the ECH index relative to two generators without specifying a homology class.  It follows from this equation that $I$ is always even, so the chain complex differential vanishes, any such $\alpha$ gives a nonzero class, and the associated spectral invariant is the action of this class.  It is now very helpful to interpret the right hand side of \eqref{eqn:Iellip} as the number of integer lattice points in the triangle bounded by the axes and the line through $(m,n)$ with slope $-b/a$.  This implies in particular that for any even nonnegative integer $2k$, there is exactly one generator with $I = 2k$, and these generators come in order of their action.  This proves Proposition~\ref{prop:echell} in the case where $b/a$ is irrational and the general case then follows immediately from continuity.
\end{proof}

\subsection{A Weyl law for contact three-manifolds that implies Theorem~\ref{thm:cghr}}
\label{sec:contactweyl}

To prove Theorem~\ref{thm:cghr}, we prove a more general result for contact $3$-manifolds that implies it; this result is also quite useful in applications.

Let us now state this result.  To state it, we first have to introduce the grading on ECH.  It is induced by the ECH index $I$ from \eqref{eqn:defnech}.  One attempts to define
$I(\alpha,\beta) = I(Z) $
where $Z$ is any relative homology class from $\alpha$ to $\beta$; one computes that when $c_1(\xi) + 2 \text{PD}(\Gamma)$ is torsion, where  $\Gamma$ denotes the homology class of both $\alpha$ and $\beta$, this does not depend on the choice of $Z$.
It will be convenient in what follows to fix such a $\Gamma$ and define $ECH(Y,\lambda,\Gamma)$ to be the homology of the sub-complex generated by ECH generators with homology class $\Gamma$.
We can refine this relative grading to a (non-canonical) absolute grading $gr$ by declaring a fixed ECH generator to have grading $0$.  With this in mind, we can now state the {\em contact Weyl law}:

\begin{theorem} \cite{cghr}
\label{thm:contactweyl}
Let $(Y,\lambda,\Gamma)$ be a closed contact three-manifold such that $c_1(\xi) + 2 \text{PD}(\Gamma)$ is torsion.  Let $\lbrace \sigma_k \rbrace$ be any sequence of nonzero classes in $ECH(Y,\lambda,\Gamma)$ with gradings $gr$ tending to $+\infty$.  Then
\[ \lim_{k \to \infty} \frac{ c_\sigma(\lambda)^2}{gr(\sigma)} = \int_Y \lambda \wedge d \lambda.\]
\end{theorem}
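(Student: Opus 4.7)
The plan is to first establish the theorem for $(Y,\lambda)$ the boundary of an ellipsoid, where the ECH generators and their indices can be computed explicitly, and then bootstrap to general contact $3$-manifolds via the symplectic cobordism formalism underlying ECH.

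For the ellipsoid model case, I would take $(Y,\lambda) = \partial E(a,b)$ with $b/a$ irrational. By the proof sketch of Proposition~\ref{prop:echell}, the ECH generators are in bijection with lattice points $(m,n)\in\mathbb{Z}_{\ge 0}^2$, and $I(\alpha,\emptyset)$ equals twice the number of lattice points in the triangle bounded by the coordinate axes and the line through $(m,n)$ of slope $-b/a$. A direct computation gives that this triangle has area $(ma+nb)^2/(2ab)$, so standard lattice-point counting yields $gr(\sigma)\sim (ma+nb)^2/(ab)$ as $m+n\to\infty$. Since the spectral invariant equals the action $ma+nb$, I obtain $c_\sigma(\lambda)^2/gr(\sigma)\to ab = \int_{\partial E(a,b)}\lambda\wedge d\lambda$. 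Because each even grading corresponds to a unique generator, this limit is independent of the choice of sequence $\{\sigma_k\}$, and the case of rational $b/a$ follows from $C^0$-continuity of ECH spectral invariants.

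For the general case, I would sandwich $(Y,\lambda)$ between disjoint unions of ellipsoid boundaries using symplectic cobordisms. Write $V := \int_Y\lambda\wedge d\lambda$. For the lower bound $\liminf c_\sigma(\lambda)^2/gr(\sigma)\ge V$, I would pack a large disjoint collection of ellipsoids $E(a_i,b_i)$ into a Liouville filling of $(Y,\lambda)$ (when one exists), arranged so that $\sum_i a_i b_i$ approaches $V$; the cobordism-map property from the proof of Monotonicity, combined with the Disjoint Union Axiom~\eqref{eqn:disjoint} and the ellipsoid case, yields the desired inequality. For the upper bound, I would embed the filling into a slightly larger ellipsoid and run the same argument in reverse. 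When no Liouville filling is available, I would instead construct the cobordism maps from ellipsoid boundaries directly, using the Hutchings--Taubes building estimate cited in the sketch of Monotonicity.

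The hardest part will be two-fold. First, handling contact manifolds without Liouville fillings forces one to build the requisite cobordism maps by hand while maintaining careful action control via $J$-holomorphic building estimates. Second, and more delicate, the theorem asserts the limit for \emph{every} sequence of nonzero classes with $gr\to\infty$, not merely the distinguished $U$-sequence used in the definition of the ECH capacities; thus one must show that every class at a given large grading has spectral invariant close to $\sqrt{V\cdot gr}$. I expect this uniformity is most naturally achieved by transporting the problem via the isomorphism~\eqref{eqn:echswf} to Seiberg--Witten Floer cohomology and invoking Taubes's asymptotic analysis of the Seiberg--Witten equations, which exhibits $V$ as the leading coefficient of a spectral asymptotic governing all of the relevant eigenvalue-type quantities at once.
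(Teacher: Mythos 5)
Your ellipsoid computation is correct and your sandwiching strategy does reproduce the paper's proof of the \emph{special case} of the Weyl law for bounded domains in $\mathbb{R}^4$ (fill $X$ by balls from inside, embed $X$ into a large ball and fill the complement, apply Monotonicity and the Disjoint Union Axiom). But this route cannot prove Theorem~\ref{thm:contactweyl} in the stated generality, for two structural reasons. First, a general closed contact three-manifold $(Y,\lambda,\Gamma)$ with $c_1(\xi)+2\mathrm{PD}(\Gamma)$ torsion need not bound a Liouville domain at all, and even when it does, the filling need not embed into a ball nor admit the complementary ball packing that the two-sided sandwich requires; the argument really only lives in the star-shaped world. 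Second, and more fundamentally, the theorem concerns \emph{arbitrary} nonzero classes $\sigma_k \in ECH(Y,\lambda,\Gamma)$ for arbitrary torsion $\Gamma$, whereas cobordism maps to or from ellipsoid boundaries only interact usefully with the $U$-map and the contact class $[\emptyset]$ in the class $\Gamma=0$ (for $S^3$ there is no other $\Gamma$), and they can kill classes. So at best the cobordism strategy controls the ECH spectrum $c_k$, i.e.\ it could recover Theorem~\ref{thm:cghr}; it gives no handle on $c_\sigma(\lambda)$ for a general $\sigma$. Note that the paper's logical order is the reverse of yours: Theorem~\ref{thm:cghr} is \emph{deduced from} Theorem~\ref{thm:contactweyl}, precisely because the contact statement is the stronger one.

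Your closing paragraph correctly guesses that the general case must go through the isomorphism \eqref{eqn:echswf} with Seiberg--Witten Floer cohomology and Taubes's analysis, but "invoking Taubes's asymptotic analysis" is not yet an argument, and the actual mechanism is quite different from a packing argument. The proof in \cite{cghr} attaches to $\sigma$ a one-parameter family of critical points $p(r)$ of the $r$-perturbed Chern--Simons--Dirac functional $\mathcal{F}_r$ (min-max/spectral invariants on the Seiberg--Witten side), sets $v(r) = -2\mathcal{F}_r(p(r))/r$, and establishes three things: (i) $v(r) \to c_\sigma(\lambda)$ as $r\to\infty$, via Taubes's estimates relating the energy $i\int \lambda\wedge F_A$ of solutions to the action of the limiting orbit set; (ii) for $r$ below a threshold $r_j$ depending on the grading $j$, the relevant critical points are reducible, so one can solve $F_A = -2ird\lambda$ explicitly and compute both the value of $\mathcal{F}_r$ and the grading, the latter via a spectral-flow estimate of Taubes --- this eigenvalue count is where the volume $\int_Y \lambda\wedge d\lambda$ actually enters, and is the genuine "Weyl law" step; and (iii) $dv/dr = $ (an explicit expression in $E(A(r))$ and $cs(A(r))$) is controlled by ODE comparison so that the intermediate regime contributes an error that washes out as $j\to\infty$. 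None of this is reachable by ellipsoid embeddings, so you should treat the Seiberg--Witten analysis as the proof itself rather than as a patch for the uniformity issue.
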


\begin{remark}
\label{rmk:echswf}
A crucial point in applications is that whenever $c_1(\xi) + 2 \text{PD}(\Gamma)$ is torsion, an infinite sequence of nonzero classes $\lbrace \sigma_k \rbrace$ as in the statement of Theorem~\ref{thm:contactweyl} exists.  This is a highly non-trivial fact, which is a consequence of \eqref{eqn:echswf} and known properties of $HM$ \cite{km}.  In fact, one can always assume that $U \sigma_k = \sigma_{k-1}$; we call such a sequence a {\em $U$-sequence}.
\end{remark}

The contact Weyl law immediately 
implies the Weyl law for four-manifolds, as we now explain.
\begin{proof}[Proof of Theorem~\ref{thm:cghr} assuming Theorem~\ref{thm:contactweyl}]
Normalize the grading so that $gr([\emptyset]) = 0$.  Because the $U$-map decreases the grading by $2$, the number $c_k$ is the spectral invariant associated to some class of grading $2k$.  Hence, Theorem~\ref{thm:cghr} follows from Theorem~\ref{thm:contactweyl}, after relating the right hand sides of each of the two Weyl laws via Stokes' Theorem.
\end{proof}

Thus it remains to explain the proof of Theorem~\ref{thm:contactweyl}.    We will give some sense of how this works below, but first we will take a detour, as we explain in the next section.

\subsection{The Weinstein conjecture and the theorem on two orbits}   

Before continuing with explaining the proof of Theorem~\ref{thm:contactweyl}, let us sketch some notable applications.

Let us first recall the {\em Weinstein conjecture}, a problem at the heart of developments in the field.  This states that a contact form on a closed manifold always has at least one periodic orbit.  (Recall that periodic orbits of the Reeb vector field are called Reeb orbits.)  In dimension $3$, this was proved by Taubes \cite{taubesweinstein}; in fact, the three-dimensional Weinstein conjecture follows from \eqref{eqn:echswf}, via Remark~\ref{rmk:echswf}.  Motivation for studying the Weinstein conjecture comes from Hamiltonian dynamics.  Namely, if the Hamiltonian $H$ is autonomous (i.e. does not depend on time), then by conservation of energy the dynamics preserve $H$; in many situations of interest these hypersurfaces $Y$ are {\em contact}, in the sense that $\omega|_Y = d \lambda$ for some contact form $\lambda$, and then the dynamics along $Y$ are 
encoded 
by a Reeb vector field.

In addition to the Weinstein conjecture itself, the problem of finding improved lower bounds on the number of periodic orbits is of wide interest.  For example, when a level set of $H:\mathbb{R}^{2n} \to \mathbb{R}$ is {\em star-shaped}, meaning it is transverse to the radial vector field, then the level set has a natural contact form and it is an old conjecture that the corresponding Reeb flow has at least $n$ Reeb orbits.  The first application of Theorem~\ref{thm:contactweyl} was the proof of the following refinement of Taubes' result in dimension 3, answering in particular the $n=2$ case of the above conjecture.

\begin{theorem}\cite{onetwo}
\label{thm:onetwo}
Any closed $3$-manifold with contact form $(Y,\lambda)$ has at least two simple Reeb orbits.
\end{theorem}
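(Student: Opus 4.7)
The plan is to argue by contradiction. Suppose $(Y,\lambda)$ is a closed contact $3$-manifold with a unique simple Reeb orbit $\gamma$ of period $T := \int_\gamma \lambda$. I first handle the nondegenerate case via a chain-level analysis combined with Theorem~\ref{thm:contactweyl}, and then reduce the degenerate case by perturbation and continuity of ECH spectral invariants.

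In the nondegenerate case, first rule out $\gamma$ being hyperbolic: the only ECH generators would then be $\emptyset$ and $\lbrace(\gamma,1)\rbrace$, so $ECH(Y,\lambda)$ would be at most two-dimensional, contradicting the infinite $U$-sequence of nonzero classes of unbounded grading guaranteed by Remark~\ref{rmk:echswf}. Hence $\gamma$ is elliptic with irrational rotation number $\theta$, and the full list of ECH generators is $\lbrace(\gamma,m)\rbrace_{m\geq 0}$, with $\lbrace(\gamma,m)\rbrace$ of action $mT$. The index formula \eqref{eqn:defnech}, together with $CZ(\gamma^k) = 2\lfloor k\theta\rfloor+1$ and the fact that the self-intersection term $Q_\tau$ is quadratic in $m$, yields $g(m) := gr(\lbrace(\gamma,m)\rbrace) = Cm^2 + O(m)$ for some $C>0$. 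In particular $g$ is eventually strictly increasing, so all sufficiently large gradings are occupied by at most one generator and the corresponding chain groups have dimension at most one over $\mathbb{Z}/2\mathbb{Z}$.

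Now normalize the $U$-sequence so that $gr(\sigma_k) = gr(\sigma_0) + 2k$ and apply Theorem~\ref{thm:contactweyl}: one obtains $c_{\sigma_k}(\lambda)^2/(2k) \to V := \int_Y \lambda \wedge d\lambda$. On the other hand, for all large $k$ the nonzero class $\sigma_k$ must be represented by the unique generator $\lbrace(\gamma,m_k)\rbrace$ with $g(m_k) = gr(\sigma_k)$, so $c_{\sigma_k}(\lambda) = m_k T$. The gradings $gr(\sigma_k)$ are pairwise distinct, so the integers $m_k$ are pairwise distinct; and since the $U$-map counts currents, which decrease action by Stokes for $\lambda$-compatible $J$, one has $c_{\sigma_0} \leq c_{\sigma_1} \leq \cdots$, forcing the $m_k$ to be strictly increasing. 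Therefore $m_k \geq k$, and combining with $m_k T = \sqrt{2kV}(1+o(1))$ gives $k \leq 2V(1+o(1))/T^2$, which is impossible for all large $k$.

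For degenerate $\lambda$, approximate in $C^0$ by nondegenerate contact forms $\lambda_n \to \lambda$; continuity of ECH spectral invariants together with the Weyl law applied to each $\lambda_n$ still yield $c_{\sigma_k}(\lambda)^2/(2k) \to V$, and the fact that every orbit of $\lambda$ is an iterate of $\gamma$ forces $c_{\sigma_k}(\lambda) \in T\mathbb{Z}_{\geq 0}$. The main obstacle here is that the perturbed forms $\lambda_n$ can acquire many new simple orbits bifurcating from degenerate iterates of $\gamma$, so the clean ``at most one generator per grading'' dichotomy from the nondegenerate argument breaks down at the chain level of $\lambda_n$. The remedy is to exploit that these bifurcating simple orbits concentrate near iterates of $\gamma$, so their periods cluster near $T\mathbb{Z}_{\geq 0}$, and to carry out a careful counting of nonzero classes versus available actions using the $U$-map structure on $ECC(Y,\lambda_n,J_n)$, arguing that the spectral invariants still cannot grow fast enough to accommodate the $k$ distinct nonzero classes demanded by the Weyl law.
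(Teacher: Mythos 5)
Your global strategy is the same as the paper's: assume a single simple orbit $\gamma$ of period $T$, note that the spectral invariants of a $U$-sequence lie in $T\mathbb{Z}_{\ge 0}$, force them to grow linearly in $k$, and contradict the $O(\sqrt{k})$ growth demanded by Theorem~\ref{thm:contactweyl}. But the mechanism you use to force linear growth differs from the paper's, and this is where the gaps are. In the nondegenerate case you derive strict increase of the $m_k$ from the claim that, for large gradings, each grading is occupied by at most one generator $\lbrace(\gamma,m)\rbrace$; this rests on the assertion that $gr(\lbrace(\gamma,m)\rbrace)=Cm^2+O(m)$ with $C>0$, which you do not justify (the quadratic coefficient combines $Q_\tau(\gamma)$ with the rotation number and has no a priori sign, and if it vanishes your injectivity claim can fail). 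The paper avoids this entirely: it proves the strict inequality $c_{U\sigma}(\lambda) < c_\sigma(\lambda)$ of \eqref{eqn:decrease} geometrically — a $U$-map current must pass through a marked point $(0,z)$ with $z$ off $\gamma$, hence has strictly positive $d\lambda$-energy — and then strictness plus values in $T\mathbb{Z}_{\ge 0}$ immediately gives jumps of at least $T$. (You invoke only the non-strict monotonicity of spectral invariants under $U$, which is why you are forced into the chain-level grading analysis.)

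The more serious gap is the degenerate case, which the paper identifies as the technical heart of the theorem. Your proposal correctly diagnoses the difficulty (perturbations create new simple orbits, destroying the one-generator-per-grading structure), but the proposed "remedy" — that bifurcating orbits have periods clustering near $T\mathbb{Z}_{\ge 0}$ and that a "careful counting" will still bound the growth — is a description of a hoped-for argument rather than an argument; in particular it does not produce the needed strict gaps between consecutive spectral invariants of the degenerate form. The paper's resolution is to establish \eqref{eqn:decrease} directly for the degenerate $\lambda$: one approximates by nondegenerate forms $\lambda_n$ and shows via a compactness argument that the $U$-map curves $C_n$ satisfy $\int_{C_n} d\lambda_n \ge \delta > 0$ uniformly in $n$, since otherwise they would converge to a zero-energy curve through $(0,z)$, which is impossible. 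To complete your proof you would need either to carry out this uniform energy lower bound or to supply a genuine substitute for it.
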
 

Before giving the proof, let us note that this is the sharp result.  For example, as we explained in the proof of Proposition~\ref{prop:echell}, the natural contact form on the boundary of an irrational ellipsoid has exactly two embedded Reeb orbits.  We should also note that the general strategy of proof, of considerations of the growth rate of the asymptotics, comes up in various forms later in these notes as well.


\begin{proof}[Sketch of proof]
By the discussion above, we know that there is at least one orbit, so we just have to prove that there cannot be exactly one.  Assume there is exactly one orbit $\gamma$, with period $T$.   Consider a $U$-sequence $\lbrace \sigma_k \rbrace$ whose existence is guaranteed by Remark~\ref{rmk:echswf}.  
Then each
\begin{equation}
\label{eqn:spec} 
 c_{\sigma_k}(\lambda) = m_k T, \quad \quad m_k \in \mathbb{Z}_{\ge 0}:
\end{equation}
in the non-degenerate case, this is immediate from the definition of the spectral invariant associated to $\sigma_k$ (since any orbit set is of the form $\lbrace (\gamma,m) \rbrace$), and the degenerate case follows along similar lines by approximation.   
In addition, one has the inequality
\begin{equation}
\label{eqn:decrease}
c_{U \sigma}(\lambda) < c_{\sigma}(\lambda).
\end{equation} 
This inequality then contradicts Theorem~\ref{thm:contactweyl}, since combining this relation with \eqref{eqn:spec} implies that each $c_{\sigma_k}(\lambda)$ is at least $T$ more than the previous one, hence since the $U$ map decreases the grading by $2$ one has
\[ \lim_{k \to \infty} \frac{c_{\sigma_k}(\lambda)^2}{gr(\sigma_k)} = + \infty.\]

Thus it remains to explain \eqref{eqn:decrease}.  Establishing this in the case where $\lambda$ is degenerate is the most technical part of the proof.  We will be brief, focusing on the geometric idea: the point is that the $U$ map counts $J$-holomorphic currents through a fixed point $(0,z)$ with $z$ not on any Reeb orbit.  In particular, since the curves are asymptotic to $\gamma$ but have to travel a fixed distance away from $\gamma$, this takes a certain amount of energy.  More precisely, such a current $C$ has $\int_C d \lambda > 0$, which is a straightforward exercise using the $\lambda$-compatibility of $J$ and the equation \eqref{eqn:jhol}.  In the degenerate case, one approximates by nondegenerate contact forms $\lambda_n$ and shows that the analogue $\int_{C_n} d \lambda_n$ still has a positive lower bound, independent of $n$; this is proved by a compactness argument;
the idea is that otherwise, the curves $C_n$ would have to converge to a zero energy curve through the point $(0,z)$, which can not occur. 
\end{proof}    

\subsection{Hidden boundaries}
\label{sec:hidden}

Here is a seemingly different sort of application which actually follows from a similar argument to that of the previous section in combination with previously known facts.

The following is an old question of Eliashberg-Hofer \cite{eliashhofer}: {\em to what degree does the interior of a symplectic manifold determine its boundary?}  Though a symplectomorphism might not extend to the boundary, there is nevertheless more ridigidity than one might expect.  
Eliashberg-Hofer specifically single out the case of the ball as a test case \cite{eliashhofer}, and construct some examples where the interior does not determine the boundary.  Using ideas similar to the proof of Theorem~\ref{thm:onetwo}, we can prove the following which essentially answers the Eliashberg-Hofer ball question in dimension $4$:

\begin{theorem} \cite{cgm}
\label{thm:recog}
Let $(X,\omega)$ be a four-dimensional Liouville domain whose interior is symplectomorphic to the interior of a ball.  Then $(X,\omega)$ is symplectomorphic to a ball.
\end{theorem}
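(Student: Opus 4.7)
The plan is to combine the ECH capacity Weyl law (Theorem~\ref{thm:cghr}) with monotonicity of ECH capacities and a rigidity argument for the Reeb dynamics on $\partial X$ in the spirit of Theorem~\ref{thm:onetwo}. Let $\phi : \text{int}(B(r)) \to \text{int}(X)$ denote the given interior symplectomorphism. I would first observe that $\phi$ preserves the symplectic volume and that boundaries have measure zero, so $\text{vol}(X,\omega) = \text{vol}(B(r))$; in addition, for any $r' < r$ the image $\phi(B(r'))$ is a symplectic ball sitting inside $\text{int}(X) \subset X$, so the Monotonicity Axiom gives $c_k(B(r')) \le c_k(X)$ for every $k$, and letting $r' \to r$ together with continuity of ECH capacities yields the pointwise lower bound $c_k(X) \ge c_k(B(r))$ for all $k$.

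Applying the ECH capacity Weyl law to both $X$ and $B(r)$ then gives
\[ \lim_{k \to \infty} \frac{c_k(X)^2}{k} = 4\,\text{vol}(X) = 4\,\text{vol}(B(r)) = \lim_{k \to \infty} \frac{c_k(B(r))^2}{k}. \]
Since $c_k(B(r))$ grows like $\sqrt{k}$, combining this with the pointwise inequality and writing $d_k = c_k(X) - c_k(B(r)) \ge 0$ shows $d_k(2 c_k(B(r)) + d_k) = o(k)$, hence $d_k = o(\sqrt{k})$. Recalling that the capacities $c_k(B(r))$ are precisely integer multiples of $r$ arranged in the triangular pattern of Example~\ref{ex:ball}, this near-equality places a severe constraint on the action spectrum, and hence on the Reeb dynamics of $(\partial X, \lambda)$.

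The next step, which I expect to be the crux, is to upgrade this spectral near-equality into a contactomorphism $\partial X \cong \partial B(r)$, taking inspiration from the proof of Theorem~\ref{thm:onetwo}. Specifically, I would fix a $U$-sequence $\{\sigma_k\}$ on $\partial X$ as in Remark~\ref{rmk:echswf}, use the strict-decrease property $c_{U\sigma}(\lambda) < c_\sigma(\lambda)$, and argue that if the Reeb flow on $\partial X$ admitted embedded orbits whose periods were not asymptotically commensurate with $r$, or if the Morse--Bott family structure present on $\partial B(r)$ were broken, then the actions $c_{\sigma_k}(\lambda)$ would be forced to deviate from the ball's triangular spectrum by more than $o(\sqrt{k})$, contradicting what was just established. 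This should pin down the Reeb dynamics on $\partial X$ as a Morse--Bott family of orbits all of period $r$, and together with the topological input that $\text{int}(X)$ is diffeomorphic to an open ball, force $(\partial X, \lambda)$ to be contactomorphic to the standard $S^3$ of the correct volume, hence to $\partial B(r)$.

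Finally, once the boundaries are identified contactomorphically, I would extend $\phi$ to a global symplectomorphism $X \to B(r)$ by a collar construction: the nested hypersurfaces $\phi(\partial B(r'))$ for $r'$ near $r$ furnish a contact-type exhaustion of a neighborhood of $\partial X$ whose symplectization coordinates can be matched with the symplectization collar of $\partial B(r)$. The main obstacle is the dynamical rigidity step: the ball's Reeb flow is highly degenerate (an $S^1$-family of periodic orbits of common period), and ruling out every alternative Reeb flow on $\partial X$ whose ECH spectral data agrees with the ball's up to order $\sqrt{k}$ is subtle. I would expect this step to require a subleading refinement of the Weyl law together with a careful analysis of how a $U$-sequence on $\partial X$ interacts with the orbit action spectrum; the boundary extension, while delicate, should then follow by standard techniques once the contact identification is in hand.
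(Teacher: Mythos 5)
There is a genuine gap, and it comes from discarding exact information in favor of asymptotic information. The generalized ECH capacities of Remark~\ref{rmk:echgeneral} are defined as suprema over Liouville domains embedded in the interior, so they depend only on the interior; this gives the \emph{exact} equality $c_k(X,\omega) = c_k(B(r))$ for every $k$ (your monotonicity argument gives the lower bound, and the same reasoning applied to Liouville subdomains of $\text{int}(X) \cong \text{int}(B(r))$ gives the upper bound). The paper's proof leans entirely on this exact equality: since the ball's capacities repeat (Example~\ref{ex:ball}: $0,1,1,2,2,2,\ldots$), two consecutive spectral invariants coincide, i.e.\ $c_{U\sigma}(\lambda) = c_{\sigma}(\lambda)$ for the contact form $\lambda$ on $\partial X$. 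The strict-decrease inequality \eqref{eqn:decrease} holds whenever one can place the marked point $z$ off all Reeb orbits, so its failure forces every point of $\partial X$ to lie on a closed Reeb orbit; that single observation delivers the entire dynamical rigidity (all orbits closed, hence an effective circle action). By passing to the Weyl law and only retaining $c_k(X) - c_k(B(r)) = o(\sqrt{k})$, you throw away precisely the repetition in the spectrum that triggers this dichotomy, and the step you yourself flag as the crux --- ruling out every Reeb flow on $\partial X$ whose spectral data agrees with the ball's to order $o(\sqrt{k})$ --- is not something the existing technology (or any known subleading refinement) provides. The Weyl law is in fact not needed for this theorem at all.

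Your endgame also diverges from the paper's in a way that reintroduces the original difficulty. Extending $\phi$ over a collar requires controlling $\phi$ near $\partial X$, which is exactly what the Eliashberg--Hofer ``hidden boundary'' phenomenon says you cannot assume. The paper instead classifies the effective circle actions on $S^3$ (noting $\partial X = S^3$ for topological reasons), uses Moser's trick to identify $(\partial X,\lambda)$ with the standard contact boundary of the ball, and then concludes by the uniqueness of symplectic fillings of the standard $S^3$ up to blow-down --- no boundary extension of $\phi$ is ever attempted.
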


\begin{remark}
\label{rmk:echgeneral}
To prove the theorem, it is helpful to recall an extension of ECH capacities to arbitrary symplectic $4$-manifolds, due to Hutchings \cite{qech}: if $(X,\omega)$ is any symplectic $4$-manifold, Hutchings defines the ECH capacity $c_k(X,\omega)$ to be the supremum $c_k(X',\omega')$ over any Liouville domain embedded in the interior; it is an elementary exercise that this agrees with the usual ECH capacities for Liouville domains.
\end{remark}

\begin{proof}[Sketch of proof of Theorem~\ref{thm:recog}]
By Remark~\ref{rmk:echgeneral}, the ECH capacities are determined by the interior.  Hence, $(X,\omega)$ has the same ECH capacities as a ball.  Thus by Propositon~\ref{prop:echell}, two ECH capacities coincide and so, writing $\omega = d \lambda$ with $\lambda$ a contact form on the boundary, $c_{U \sigma}(\lambda) = c_{\sigma}(\lambda)$.  One should now compare to the argument used to prove \eqref{eqn:decrease}: with some more care one can show that \eqref{eqn:decrease} holds as long as one can place the point $z$ away from all Reeb orbits, so in particular since \eqref{eqn:decrease} does {\em not} hold in the present situation, we learn that every integral curve of the Reeb vector field for $\lambda$ is closed.  From this one can show that the Reeb flow in fact generates an effective circle action.  On the other hand, for topological reasons it must also be the case that $\partial X = S^3$, and the effective circle actions on $S^3$ have been classified.  It is not too hard to show from this that the Reeb flow on $\partial X$ in fact agrees with the Reeb flow on the boundary of the ball, and a computation using Moser's trick then implies that the contact form must be diffeomorphic to the contact form on the boundary of the standard ball.  However, the boundary of the ball is known to have a unique symplectic filling up to blow-down.
\end{proof}

\subsection{The closing lemma}
\label{sec:irie}

The following spectacular application of Theorem~\ref{thm:contactweyl} is due to Irie:

\begin{theorem} \cite{irie}
\label{thm:closing}
A $C^{\infty}$-generic contact form on a closed three-manifold $Y$ has a dense set of periodic orbits.  
\end{theorem}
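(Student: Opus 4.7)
The plan is to reduce density of periodic orbits to a Baire category argument and then use the contact Weyl law (Theorem~\ref{thm:contactweyl}) to force Reeb orbits into a prescribed open set via a carefully chosen perturbation, following Irie. Fix a countable basis $\{V_n\}_{n \in \mathbb{N}}$ for the topology of $Y$ and let $\mathcal{O}_n$ denote the set of contact forms $\lambda$ on $Y$ whose Reeb flow has a closed orbit meeting $V_n$. If each $\mathcal{O}_n$ is open and $C^\infty$-dense, then $\bigcap_n \mathcal{O}_n$ is a dense $G_\delta$, and any contact form in this intersection has a Reeb orbit in every basis open set, hence a dense set of periodic orbits. Openness is relatively routine: a nondegenerate closed orbit of $\lambda$ meeting $V_n$ persists under $C^\infty$-small perturbation via the implicit function theorem, and the degenerate case reduces to this after a preliminary small perturbation within $\mathcal{O}_n$.

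The heart of the matter is density. Fix $\lambda$ and an open set $V \subset Y$; I aim to produce contact forms arbitrarily $C^\infty$-close to $\lambda$ lying in $\mathcal{O}_V$. Choose a smooth $f : Y \to [0, \infty)$ supported in $V$ with $f \not\equiv 0$, and consider the one-parameter family
\[
\lambda_s := (1 + sf)\lambda, \qquad s \in [0, \epsilon],
\]
which are contact forms for $\epsilon$ small, satisfying $\lambda_s \wedge d\lambda_s = (1 + sf)^2 \, \lambda \wedge d\lambda$. Let $\{\sigma_k\}$ be a $U$-sequence as in Remark~\ref{rmk:echswf}. The contact Weyl law gives
\[
\lim_{k \to \infty} \frac{c_{\sigma_k}(\lambda_s)^2}{gr(\sigma_k)} = \int_Y (1 + sf)^2 \, \lambda \wedge d\lambda,
\]
and since the right-hand side is strictly larger at $s = \epsilon$ than at $s = 0$, there is some $k$ for which $c_{\sigma_k}(\lambda_\epsilon) > c_{\sigma_k}(\lambda_0)$. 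The continuous function $s \mapsto c_{\sigma_k}(\lambda_s)$ must therefore be strictly increasing on some subinterval of $[0, \epsilon]$.

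The geometric input enters next. On the residual set of $s$ for which $\lambda_s$ is nondegenerate, $c_{\sigma_k}(\lambda_s)$ equals the action $\mathcal{A}_{\lambda_s}(\alpha(s))$ of some ECH generator $\alpha(s) = \{(\alpha_i, m_i)\}$. Outside $\mathrm{supp}(f) \subset V$ the Reeb vector fields of all $\lambda_s$ coincide, so any orbit of $\lambda_{s_0}$ disjoint from $\mathrm{supp}(f)$ is simultaneously an orbit of $\lambda_s$ for $s$ near $s_0$, with action $\sum_i m_i \int_{\alpha_i} \lambda$ independent of the parameter. If at every such $s_0$ the minimizing generator $\alpha(s_0)$ avoided $V$, $c_{\sigma_k}(\lambda_s)$ would be locally constant, contradicting its strict monotonicity. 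Hence there exists some $s \in (0, \epsilon]$ at which a closed Reeb orbit of $\lambda_s$ meets $V$, and taking $\epsilon \to 0$ produces the required approximating forms in $\mathcal{O}_V$.

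The main obstacle I anticipate is rigorously executing the "locking" step: one must rule out the possibility that the minimizing ECH generator jumps to a different $V$-avoiding generator with a coincidentally smaller action, causing $c_{\sigma_k}$ to increase without any orbit entering $V$. This requires a careful analysis of the action spectrum under the family $\lambda_s$, using both the $C^0$-continuity of the ECH spectral invariants recalled in Section~\ref{sec:spectral} and the discreteness of the action spectrum at nondegenerate parameters, to convert the local upper bound sketched above into an actual obstruction to strict increase. The remaining items—$C^\infty$-genericity of nondegeneracy within $\{\lambda_s\}$ and the openness claim for $\mathcal{O}_n$—are comparatively standard.
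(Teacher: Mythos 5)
Your overall strategy coincides with the paper's (and Irie's): a Baire category reduction, a conformal perturbation supported in the target open set, the contact Weyl law to force $c_{\sigma_k}(\lambda_\epsilon) > c_{\sigma_k}(\lambda_0)$ for some $k$, and then an argument that the spectral invariant could not have moved if no orbit entered $V$. The first three steps are correct and match the paper. The genuine gap is precisely the step you flag at the end and leave unresolved: the claim that ``if at every nondegenerate $s_0$ the minimizing generator avoided $V$, then $c_{\sigma_k}(\lambda_s)$ would be locally constant'' does not follow from a parameter-by-parameter analysis, because the minimizing ECH generator can jump between different $V$-avoiding orbit sets, and the nondegenerate parameters form only a residual (not open) set, so local constancy there does not propagate. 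The tools you name --- discreteness of the action spectrum at nondegenerate parameters together with $C^0$-continuity --- are not sufficient to close this; indeed the action spectrum need not be discrete at degenerate parameters at all.

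The paper closes the gap with a global measure-theoretic argument rather than a pointwise one, and this is the second key observation of Irie that your write-up is missing. If no $\lambda_s$ has a closed Reeb orbit meeting $V$, then the closed orbits outside $\mathrm{supp}(f)$ are literally the same for every $s$, with the same periods; hence the set $\mathcal{S} \subset \mathbb{R}$ of all actions of ECH generators built from them is a single, $s$-independent set, and $\mathcal{S}$ has Lebesgue measure zero (a standard fact about action spectra). The function $s \mapsto c_{\sigma_k}(\lambda_s)$ is continuous and takes values in $\mathcal{S}$ for every $s$ (in the degenerate case by the approximation defining the spectral invariant), so its image is a connected subset of a measure-zero set, i.e. a single point. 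Thus $c_{\sigma_k}$ is constant in $s$ outright --- with no case analysis of which generator achieves the minimum and no restriction to nondegenerate parameters --- contradicting the strict increase supplied by the Weyl law. Inserting this argument in place of your ``locking'' step makes the proof complete and identical in substance to the one in the paper.
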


To put this in context, Theorem~\ref{thm:closing} is an example of a {\em high regularity closing lemma}.  Such statements in low regularity  are known, see e.g. the discussion in \cite{ai} and the references therein.
 However, finding closing lemmas in higher regularity has long been known to be a very challenging matter for which new ideas are needed; this problem is featured as Smale's $10^{th}$ problem for the $21^{st}$ century.
 Let us now give the idea behind Irie's short argument:

\begin{proof}[Sketch of proof of Theorem~\ref{thm:closing}]

By a Baire category theory argument, it suffices to prove the following: fix an open set $U$, then by a $C^{\infty}$-small perturbation of $\lambda$ we can guarantee that there is a periodic orbit passing through $U$.  To prove this, for $0 \le t \le 1$, consider the perturbation $\lambda \to e^{tf} \lambda$, where $f$ is a small nonnegative nonzero function supported in $U$.  Fix a nonzero sequence of ECH classes $\lbrace \sigma_k \rbrace$ as in the statement of Theorem~\ref{thm:contactweyl}.   Then the first key observation of Irie is as follows: since $e^f \lambda$ has more volume than $\lambda$, it follows from the Weyl law Theorem~\ref{thm:contactweyl}  that for some $\sigma = \sigma_k$,
$c_{\sigma}(e^{f} \lambda) > c_{\sigma}(\lambda).$
As Irie explains, this  now implies that $e^{tf} \lambda$ must have a periodic orbit passing through $U$ for some $0 \le t \le 1$; indeed, it is known that $c_{\sigma}(e^{tf}(\lambda))$ is continuous in $t$, and it is also known that the set of periods of the periodic orbits has measure $0$, hence $c_{\sigma}(e^{tf} \lambda)$ is taking values in a ($t$-dependent) measure $0$ set.  Thus, if $e^{tf} \lambda$ has no periodic orbits in $U$ for any $t$, this measure zero $t$-dependent set is constant in $t$ (since the periodic orbits away from $U$ are unchanged by the perturbation) and so  $c_{\sigma}(e^{tf}\lambda)$ must itself be constant, a contradiction.
\end{proof}

\subsection{Low action curves of controlled topology}
\label{sec:low}

We now explain some applications of the contact Weyl law to understanding the structure of the pseudoholomorphic curves in four-dimensional symplectizations themselves.  These results will figure prominently in the more recent developments that we explain in \S\ref{sec:dynamics}.

To fix the setup, let $\lbrace \sigma_k \rbrace^{\infty}_{k=1}$ be a $U$-sequence, as guaranteed by Remark~\ref{rmk:echswf}.  Fx now a nondegenerate contact form $\lambda$.  By the existence of this $U$-sequence, for any $N$, we can find an ECH generator $\alpha(N)$, of action no more than $c_{\sigma_N}(\lambda)$, such that $U^N \alpha(N) \ne 0$; in particular, there exist ECH generators $\alpha(N), \ldots, \alpha(0)$ such that there are $J$-holomorphic curves $C(i)$, $1 \le i \le N$, counted by the ECH $U$-map, from $\alpha(i)$ to $\alpha(i-1)$.

While any individual $C(i)$ can be quite complicated --- for example, the ECH index condition gives no a priori bound on the genus of the curve --- we will be interested in the nature of the $C(i)$ ``on average".   Towards this end, we say that the {\em usual} ECH $U$-curve has some property $\mathcal{P}$ if the proportion of the $C(i)$ with that property tends to $1$ as $N \to \infty$.  Here is the first such property that we will be interested in.  Define the {\em action} of a curve $C$ by the formula
$\mathcal{A}(C) = \int_C d \lambda.$
This is a nonnegative quantity that is $0$ if and only if $C$ is $\mathbb{R}$-invariant.  It is loosely analogous to the height difference between two critical points connected by a flow line in Morse theory.


\begin{lemma}
The usual ECH $U$-curve has low action.  More precisely, fix any $\epsilon > 0$.  Then, the usual ECH $U$-curve has action $< \epsilon$
\end{lemma}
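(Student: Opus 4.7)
The plan is to combine a telescoping Stokes identity for curve actions with the sublinear growth of the spectral invariants guaranteed by the contact Weyl law, and then finish with a pigeonhole argument.

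First, I would record the standard identity: for a $J$-holomorphic curve $C$ in the symplectization from an ECH generator $\alpha$ to $\beta$, the $\lambda$-compatibility of $J$ and Stokes' theorem together yield $\mathcal{A}(C) = \mathcal{A}(\alpha) - \mathcal{A}(\beta)$. Applying this to the chain $C(1),\ldots,C(N)$ from $\alpha(N)$ down to $\alpha(0)$, the right-hand side telescopes and the remaining $-\mathcal{A}(\alpha(0))$ is nonpositive, giving
\[ \sum_{i=1}^{N} \mathcal{A}(C(i)) \;=\; \mathcal{A}(\alpha(N)) - \mathcal{A}(\alpha(0)) \;\leq\; \mathcal{A}(\alpha(N)). \]
Since $\alpha(N)$ appears in a cycle representing $\sigma_N$ of total action at most $c_{\sigma_N}(\lambda)$, and the action of an ECH cycle is the maximum of the actions of its constituent generators, we have $\mathcal{A}(\alpha(N)) \leq c_{\sigma_N}(\lambda)$.

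Next I would invoke Theorem~\ref{thm:contactweyl}: because the $U$-map drops the absolute grading by $2$, the gradings of the $\sigma_k$ grow linearly in $k$, so the Weyl law gives $c_{\sigma_N}(\lambda) = O(\sqrt{N})$. Combining this with the telescoping bound,
\[ \sum_{i=1}^{N} \mathcal{A}(C(i)) \;=\; O(\sqrt{N}). \]

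Finally, I would run a pigeonhole argument. Suppose for contradiction that for some $\delta > 0$ there were arbitrarily large $N$ for which at least $\delta N$ of the curves $C(i)$ satisfied $\mathcal{A}(C(i)) \geq \epsilon$. Then the same sum would be at least $\delta \epsilon N$, which eventually dominates any $O(\sqrt{N})$ quantity, a contradiction. Hence the proportion of $C(i)$ with action at least $\epsilon$ must tend to $0$ as $N \to \infty$, which is the claim.

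I do not expect a serious obstacle here: the argument is essentially just the tension between the strictly additive behavior of action along a chain and the square-root growth forced by the Weyl law. The only mildly subtle point is the inequality $\mathcal{A}(\alpha(N)) \leq c_{\sigma_N}(\lambda)$, which, as noted, follows from the definition of the action of an ECH cycle as the maximum action of its generators together with the choice of $\alpha(N)$ from a nearly action-minimizing representative of $\sigma_N$.
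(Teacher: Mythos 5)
Your proposal is correct and follows essentially the same approach as the paper: the Stokes/telescoping identity, the bound $\mathcal{A}(\alpha(N)) \le c_{\sigma_N}(\lambda)$ coming from the choice of $\alpha(N)$, and the $O(\sqrt{N})$ growth from Theorem~\ref{thm:contactweyl}. The only difference is that you spell out the final pigeonhole step which the paper leaves implicit in the phrase ``the lemma now follows, since each $\mathcal{A}(C(i)) \ge 0$.''
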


\begin{proof}
By Stokes' Theorem, we have
\[ \sum^N_{i=1} \mathcal{A}(C(i)) = \mathcal{A}(N) - \mathcal{A}(0) \le \mathcal{A}(N) \le c_{\sigma_N}(\lambda).\]
On the other hand, by Theorem~\ref{thm:contactweyl}, $c_{\sigma_N}(\lambda)$ is $O(N^{1/2})$ as $N \to \infty$.  The lemma now follows, since each $\mathcal{A}(C(i))  \ge 0$.
\end{proof}

To explain the next property of interest, recall that an ECH index $1$ curve is always a union of a single embedded component that is not $\mathbb{R}$-invariant and a number of components that are $\mathbb{R}$-invariant.  The same is true for ECH index $2$ curves \cite{echlecture} and we write such a curve $C = C_0 \sqcup C_1$ with $C_1$ denoting the non $\mathbb{R}$-invariant component.  We now want to prove the following coarse bound:

\begin{lemma}
\label{lem:topol}
The usual ECH $U$-curve has controlled topology when $c_1(\xi) = 0$.  More precisely, under this assumption, the usual ECH $U$-curve has $-\chi(C_1) \le 5$
\end{lemma}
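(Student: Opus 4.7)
The plan is to establish the sub-linear bound $\sum_{i=1}^N -\chi(C_1(i)) = O(\sqrt{N})$ via the contact Weyl law, from which the lemma follows by pigeonhole. Specifically, $C_1(i)$ is generically a connected embedded punctured surface with at least two ends, so $-\chi(C_1(i)) \ge 0$, and
\[ \#\{i \le N : -\chi(C_1(i)) > 5\} \le \tfrac{1}{6} \sum_{i=1}^N -\chi(C_1(i)) = O(\sqrt{N}), \]
making the proportion of such $i$ at most $O(1/\sqrt{N}) \to 0$.

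To produce the $O(\sqrt{N})$ bound, I would combine two telescoped identities arising from the standard index formulas (see \cite{echlecture}). For each $i$, the embedded component $C_1(i)$ generically has Fredholm index $\operatorname{ind}(C_1(i)) = 2$, so the Fredholm formula reads
\[ -\chi(C_1(i)) = 2 - 2 c_\tau(C_1(i)) - CZ^{\mathrm{ind}}_\tau(\alpha_{C_1(i)}^{+}) + CZ^{\mathrm{ind}}_\tau(\alpha_{C_1(i)}^{-}). \]
Summed over $i$, the orbits covered by the $\mathbb{R}$-invariant component $C_0(i)$ appear symmetrically at its top and bottom and so cancel, while the remaining Conley--Zehnder contributions telescope cleanly to $CZ^{\mathrm{ind}}_\tau(\alpha(N)) - CZ^{\mathrm{ind}}_\tau(\alpha(0))$; since $c_\tau$ vanishes on trivial-cylinder covers, the $c_\tau$ sum becomes $c_\tau(Z_N)$ for a total relative homology class $Z_N$ from $\alpha(N)$ to $\alpha(0)$. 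In parallel, summing the ECH index formula $I(C(i)) = 2$ yields
\[ 2N = c_\tau(Z_N) + \sum_{i=1}^N Q_\tau(C(i)) + CZ^I_\tau(\alpha(N)) - CZ^I_\tau(\alpha(0)). \]

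Under the hypothesis $c_1(\xi) = 0$, one fixes a global trivialization $\tau$ of $\xi$ along Reeb orbits so that $|c_\tau(\gamma)|$ and $|CZ_\tau(\gamma)|$ at any orbit $\gamma$ are bounded by a universal constant times the action of $\gamma$, plus a universal additive constant. The contact Weyl law (Theorem \ref{thm:contactweyl}) then gives $\mathcal{A}(\alpha(N)) = c_{\sigma_N}(\lambda) = O(\sqrt{N})$, so every telescoped asymptotic term above has size $O(\sqrt{N})$. Eliminating $c_\tau(Z_N)$ between the two identities, and using the combinatorial identity $\sum_i Q_\tau(C(i)) = N + O(\sqrt{N})$ --- which is dictated by the index equality $I(C_1) = \operatorname{ind}(C_1) = 2$ characteristic of embedded $U$-map curves, together with control on the contribution from $Q_\tau(C_1(i), C_0(i))$ --- yields the desired $\sum_{i=1}^N -\chi(C_1(i)) = O(\sqrt{N})$.

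The main obstacle is the combinatorial step asserting $\sum_i Q_\tau(C(i)) = N + O(\sqrt{N})$. While in principle this is forced by the compatibility of the Fredholm and ECH index formulas, making it rigorous requires careful bookkeeping of partition conditions at hyperbolic orbits (where $CZ^I_\tau$ and $CZ^{\mathrm{ind}}_\tau$ differ) and of the linking contributions of the possibly non-trivial $\mathbb{R}$-invariant component $C_0(i)$. A secondary point is to rule out, generically, the exceptional case that $C_1(i)$ is a plane, which would otherwise give $-\chi(C_1(i)) = -1$; this should follow from the marked-point condition in the definition of the $U$-map together with standard transversality.
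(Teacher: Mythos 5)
Your headline claim --- that $\sum_{i=1}^N -\chi(C_1(i)) = O(\sqrt{N})$ --- is the crux of your argument, and it is not only unproven but almost certainly false; this is a genuine gap rather than a bookkeeping issue. The quantity that actually telescopes is not the Fredholm-index data but the ECH index and Hutchings' $J_0$ index, both of which depend only on the orbit sets and the relative homology class. Running your own telescoping on the ECH index formula \eqref{eqn:defnech} with a global trivialization (this is where $c_1(\xi)=0$ enters) gives $\sum_i Q_\tau(C(i)) = 2N + O(\sqrt{N})$, not $N + O(\sqrt{N})$ as you assert; your ``combinatorial identity'' is therefore inconsistent with the very index formulas you invoke. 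The deeper reason your telescoping of the $CZ^{\mathrm{ind}}$ terms fails is that these depend on the partitions of the multiplicities $m_i$ into ends of $C_1(i)$, and the outgoing partition of $\alpha(i)$ (top of $C(i)$) generically differs from the incoming partition of $\alpha(i)$ (bottom of $C(i+1)$); this mismatch contributes a bounded but nonzero amount at \emph{every} step, i.e.\ order $N$ in total. Indeed, if your $O(\sqrt{N})$ bound were correct, the usual $U$-curve would be a plane or cylinder --- a vastly stronger conclusion which, as discussed in \S\ref{sec:dynamics}, requires the delicate ``score'' arguments of \cite{twoinf} and is not available by soft telescoping.

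The intended argument instead uses the $J_0$ index: one has $-\chi(C_1) \le J_0(C)$ by \eqref{eqn:J0top} and $J_0(C) \ge -1$, and since $I - J_0 = 2c_\tau + (CZ^I_\tau - CZ^J_\tau)$ involves only orbit-set data, summing over $i$ gives $\sum_i J_0(C(i)) = 2N + O(\sqrt{N})$ after killing $c_\tau$ with the global trivialization and bounding the telescoped boundary terms $\lfloor m_i\theta_i\rfloor$ by the action, which is $O(\sqrt{N})$ by Theorem~\ref{thm:contactweyl}. The conclusion then comes from the lower bound $J_0 \ge -1$ and a Markov/pigeonhole argument applied to $\sum_i (J_0(C(i))+1) \approx 3N$, which bounds the proportion of curves with $J_0 \ge 6$. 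Your pigeonhole step is fine in shape, but it must be fed the $J_0$ sum (of size $\approx 2N$ with terms bounded below), not a fictitious $O(\sqrt{N})$ bound on $\sum -\chi(C_1(i))$. Separately, your claim that $C_1(i)$ has at least two ends so $-\chi(C_1(i)) \ge 0$ is not justified; planes are not obviously excluded by the marked-point condition, which is exactly why the lower bound $J_0 \ge -1$ (rather than $\ge 0$) is what the argument uses.
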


Stronger results are possible in a similar spirit, but we want to keep the exposition as accessible as possible.  To prove this, we need to introduce a variant of the ECH index, originally introduced by Hutchings.  Define the $J_0$ index
\[ J_0(C) = - c_\tau(C) + Q_\tau(C) + CZ^J_\tau(C).\]
Here, the terms  $c_\tau(C)$ and $Q_\tau(C)$ are just as in the definition of $I$, and $CZ^J$ is a slight variant of its cousin in $I$, defined by the property that if $\alpha = \lbrace (\gamma_i,m_i)$, then $CZ_\tau^I -CZ_\tau^J(\alpha) =  \sum_i \lfloor m_i \theta_i \rfloor  + \lceil m_i \theta_i \rceil$, where the $\theta_i$ are the rotation numbers associated to the periodic orbits, see \cite{echlecture}.  The $J_0$ index depends only on the relative homology class of $C$.  On the other hand, Hutchings shows that
\begin{equation}
\label{eqn:J0top}
- \chi(C_1) \le J_0(C).
\end{equation}
Another important property proved in \cite{hutchingsj0} is that $J_0(C) \ge -1$. 

\begin{proof}[Sketch of proof of Lemma~\ref{lem:topol}]
Because the ECH index and the $J$ index are additive, we have 
\[ 2N = \sum^N_{i=1} I(C_i) = \sum J(C_i) + 2 c_\tau(C_i) + (CZ_\tau^I -CZ_\tau^J(\alpha(N)) - (CZ_\tau^I -CZ_\tau^J(\alpha(0)),\]
Choose now a trivialization of $\xi$, and use this to define the trivializations $\tau$.  Then it follows immediately that all $c_\tau$ terms vanish.  On the other hand, one can show that there exists a constant $C$ so that each term of the form  $\lfloor m_i \theta_i \rfloor$ is bounded in absolute value by the length of the corresponding orbit, up to a constant multiple of $C$; this is proved by bounding the rate of change of rotation, measured with respect to the global trivialization, as one follows the flow.  Since the length is bounded by the spectral invariant associated to $\alpha$, it then follows from Theorem~\ref{thm:contactweyl} and the above equation that $\sum J(C_i) - 2N$ has an $O(N^{1/2})$ bound.  Since each $J(C_i) \ge -1$, the Lemma follows. 
\end{proof}

\subsection{The proof of the contact Weyl law}

We now try to give a sense of the proof of Theorem~\ref{thm:contactweyl} and explain some key ideas.   The proof builds on Taubes' proof of the isomorphism \eqref{eqn:echswf}.  Before getting into technicalities, let us give a sense for that proof from a high level point of view.  The Seiberg-Witten Floer cohomology is a kind of Morse homology, for a functional $\mathcal{F}$, the ``Chern-Simons-Dirac functional".  The idea of Taubes' isomorphism \eqref{eqn:echswf} is to perturb this functional in a direction determined by the contact form and rescale the perturbation to make the perturbation large.  Roughly speaking, letting $\mathcal{F}_r$ denote the perturbed functional, in the limit as we scale up the perturbation by a parameter $r$, Taubes shows that the critical points of $\mathcal{F}_r$ converge to ECH generators, the (index $1$) flow lines converge to pseudoholomorphic curves, and the corresponding gradings agree.  

\begin{remark}
In the spirit of Remark~\ref{rmk:minimal}, we note that in the minimal hypersurface context there is a picture that has interesting parallels with the above in the context of the Allen-Cahn equation; see e.g. \cite{cm}.
\end{remark}

Returning now to the proof, in what follows, it will be very useful to note that the critical points have the schematic form $(A(r),\psi(r))$ where $\psi$ is a section of rank $2$ Hermitian vector bundle over $Y$, and $A$ is a Hermitian connection on the determinant line bundle.

{\em The proof scheme:}  The basic idea of the proof of Theorem~\ref{thm:contactweyl} is now as follows.  Given a class $\sigma$ in ECH, we can choose a $1$-parameter family of critical points $p(r)$, for $\mathcal{F}_r$, such that $\mathcal{F}_r(p(r))$ is a continuous function of $r$; essentially, the $p(r)$ carry ``Seiberg-Witten spectral invariants" and this is used to produce this family.  There are then three key points: (i) We show that $v(r) :=  - \frac{2 \mathcal{F}_r(p(r))}{r}$ converges to $c_\sigma(\lambda)$ as $r \to \infty$; (ii) We show that $v(r)$ can be estimated for small values of $r$, essentially because the Seiberg-Witten equations can be solved for small value of $r$ when the grading of $\sigma$ is sufficiently large, and a Weyl law holds for these explicit solutions; (iii) we show that the rate of change $dv/dr$ can be understood up to error that washes out in the limit as the grading goes to $+\infty$.

Let us now try to give a little more detail about these three steps:

\begin{proof}[Remarks on the proof of Theorem~\ref{thm:contactweyl}]

We refer to the proof scheme above:

\vspace{2 mm}

(i): This follows from work of Taubes after some book keeping of filtered chain complexes.  The key geometric point, proved by the Taubes, is that the {\em energy} $E(A,\psi) :=  i \int \lambda \wedge F_A$
of a solution $(A,\psi) = (A(r),\psi(r))$ converges as $r \to \infty$ to the action of the corresponding ECH generator;  another estimate proved by Taubes shows that $v(r)$ and $E(r)$ have the same limit as $r \to \infty$.  Here $F_A$ denotes the curvature of the corresponding connection.  

\vspace{1 mm}

(ii): there are a few key points to make.  A critical point $(A,\psi)$ of $\mathcal{F}_r$ is called {\em reducible} if $\psi = 0$.  Now the first point is that because the unperturbed equations have only finitely many irreducible solutions \cite{km}, if the grading $j$ is sufficiently high then the critical points $p(r)$ of $\mathcal{F}_r$, determined as explained above by the class $\sigma$ in grading $j$ are {\em reducible} for sufficiently small $r$ depending on $j$, say $r < r_j$.
Because in this case $\psi = 0$, one can actually solve for $(A(r),\psi(r))$ explicitly.  Of course we have $\psi(r) = 0$, and then the critical point equations reduce essentially to $F_A =  -2  i r d \lambda$.
Thus one sees that at $r = r_j$ the energy is exactly the volume, up to a constant term depending on $r_j$.   To show that a Weyl law holds in this case, it remains to relate $j$ to $r_j$.  The grading on the Seiberg-Witten Floer homology is defined by spectral flow, and to do this one applies an estimate of Taubes \cite[Prop. 5.1]{taubesweinstein}.

(iii): one first notes that the functional $\mathcal{F}_r$ has the schematic form $\mathcal{F}_r = \frac{1}{2} ( cs(A)  - r E(r))$
where the energy is as above and $cs$ is another term, called the Chern-Simons term.  Setting aside some subtleties of differentiability since this is just a sketch,  because we are interested in critical points $p(r) = (A(r),\psi(r))$, we obtain that $\frac{d  \mathcal{F}_r}{dr} = -  \frac{1}{2}  E(A(r))$.   This gives a formula for $dv/dr$ in terms of $r$, $E(A(r))$ and $cs(A(r))$.  Some further estimates of Taubes are used to control  $E(A(r))$ and $cs(A(r))$, and some intricate comparison principles for ODEs beyond the scope of this survey are then used to get the necessary handle on $|dv/dr|$: what one finds is that there are two phases to $dv/dr$: it first changes by a substantial but controlled amount until some parameter $\tilde{r}_j$ and then further changes are very small; for the details see \cite{cghr}.
\end{proof}

\section{Weyl laws for surfaces and their applications}
\label{sec:ch2}

The next symplectic Weyl laws and their applications were for surfaces.  We now survey this story.

\subsection{Motivating questions} 

In the author's view, critical motivations for the next phase of development of the Weyl laws came from the following two longstanding questions:

\vspace{2 mm}

{\em The Simplicity Conjecture.}  The study of the algebraic structure of transformation groups has a long history; see for example the review in \cite{simp}.   While much has been understood about these questions, a stubborn case had been the case of area-preserving homeomorphisms of a surface.  In the 1970s, the work of Fathi \cite{fathi} explained the normal subgroup structure for the case of groups of volume-preserving transformations of a manifold of dimension at least three.  However, Fathi's proof failed in the two-dimensional case and he asked whether or not similar results nevertheless hold.  After some time, an intriguing conjecture emerged that the two-dimensional case should in fact contrast the other cases because in this case the volume form is symplectic.  In particular, the {\em Simplicity Conjecture} asserted that the group of area-preserving homeomorphisms of the standard (open) two-dimensional disc that are the identity near the boundary is not a simple group.  If true, this would contrast Fathi's results, which, in the case of the disc of dimension at least $3$ implies that the corresponding group is simple.

A closely related question, also due to Fathi, is whether or not the {\em Calabi homomorphism}, which is the obstruction to simplicity of the group of area-preserving {\em diffeomorphisms} of the two-disc that are the identity near the boundary, makes sense for homeomorphisms; an extension as a homomorphism would answer Fathi's question but it was known that Calabi can not just be extended by continuity. We will say more about the Calabi homomorphism in \S\ref{sec:calabi}; it a measurement of the ``average rotation" of the map.  Let us call this the {\em Calabi extension question.}

\vspace{3 mm}

{\em The Closing Lemma.}  As we explained in \S\ref{sec:irie}, where we also explained the context for the question, Irie had proven a spectacular closing lemma for three-dimensional Reeb flows.  The closing lemma for area-preserving diffeomorphisms of surfaces had long been desired
 and so it was natural to attempt this.  Asoaka-Irie \cite{ai} proved the closing lemma for ``Hamiltonian" diffeormphisms, i.e. those that arise as time-$1$ flows of Hamilton's equations of motion.
 However, many area-preserving diffeomorphisms are not Hamiltonian and the sense was that to establish the full closing lemma in the (conservative) surface case, one would need further Weyl laws.

\vspace{3 mm}

Let us now explain the new Weyl laws that were introduced and explain how these questions were resolved.

\subsection{The link spectral invariant Weyl law.}
\label{sec:calabi}

We will now depart somewhat from the historical order of things for expository reasons.  To say a few words about the history first, though, the first work on Weyl laws for surfaces established a special case of a conjecture of Hutchings about spectral invariants defined by a cousin of embedded contact homology, called {\em periodic Floer homology (PFH) spectral invariants}; this Weyl law is also at the heart of the closing lemma, and was proved in full generality, so we will return to it in \S\ref{sec:closing}; the established special case was strong enough to settle the Simplicity Conjecture and this was the first proof \cite{simp}.  In this section, we instead concentrate on the {\em link spectral invariants} which came later.

Let us now get into specifics.  Denote by $G = \text{Homeo}_c(D^2,\mu_{std})$ the group of compactly supported area-preserving homeomorphisms of the standard two-disc.  This is what we want to show is not simple.  The link spectral invariants we will use are a sequence $f_d: G \to \mathbb{R}$
satisfying the following two key properties.


\begin{theorem} \cite{cghmss}
\label{thm:quasi}
The link spectral invariants are  homogeneous quasimorphisms of defect $2/d$, i.e.
\begin{equation}
\label{eqn:quasi}
|f_d(ab) - f_d(a) - f_d(b) | \le \frac{2}{d}, \quad f_d(a^k) = k f_d(a) \quad \quad a, b \in G. 
\end{equation} 
When $g \in G$ is smooth they satisfy the Weyl law
\[ \lim_{d \to \infty} f_d(g) = Cal(g).\]
\end{theorem}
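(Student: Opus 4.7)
The plan is to construct $f_d$ from a Lagrangian Floer spectral invariant on a symmetric product. Concretely, fix a link $L_d = \gamma_1 \sqcup \cdots \sqcup \gamma_d \subset D^2$ of disjoint circles enclosing annuli of equal area $1/d$; compactify $D^2$ to $S^2$ (with total area normalized to $2$ so that $\mathbb{L}_d := \mathrm{Sym}^d(L_d) \subset \mathrm{Sym}^d(S^2)$ is a monotone Lagrangian torus), and define $f_d(g) = c(g^{[d]}; [\mathbb{L}_d])/d$, where $g^{[d]}$ is the symmetric product diffeomorphism and $c$ is the Floer spectral invariant for the fundamental class of $\mathbb{L}_d$, taken after homogenization. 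Continuity in $C^0$ then extends $f_d$ from Hamiltonian diffeomorphisms to all of $G$.

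For the quasimorphism inequality, I would use the pair-of-pants (triangle) product on Lagrangian Floer cohomology, which gives
\[
c(g_1 g_2;[\mathbb{L}_d]) \le c(g_1;[\mathbb{L}_d]) + c(g_2;[\mathbb{L}_d]) + \Delta,
\]
where $\Delta$ measures the chain-level discrepancy between $[\mathbb{L}_d] \cup [\mathbb{L}_d]$ and $[\mathbb{L}_d]$. In the monotone model this is bounded by the area of the largest holomorphic triangle contributing to the product, which is at most the total area of $S^2$, so $\Delta \le 2$; dividing by $d$ gives the defect $2/d$ of \eqref{eqn:quasi}. Homogeneity on iterates is then obtained either directly (the fundamental class of a monotone torus is idempotent at the chain level for an iteration-equivariant perfect model) or by passing from $c_d/d$ to its homogenization $\lim_{k\to\infty} c_d(g^k)/(kd)$, which absorbs the error into a vanishing limit.

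For the Weyl law, for smooth $g$ generated by a mean-normalized Hamiltonian $H_t$, the spectral invariant $c(g^{[d]};[\mathbb{L}_d])$ is realized at minimax by a distinguished generator corresponding to the $d$ small chords near the components of $L_d$. Its action asymptotes to the Riemann sum
\[
\sum_{j=1}^d \int_0^1 \langle H_t \rangle_{A_j}\, dt,
\]
where $A_j$ is the $j$-th annulus cut out by $L_d$. After dividing by $d$, equidistribution of the $A_j$ and uniform continuity of $H_t$ imply convergence to $\int_0^1 \int_{D^2} H_t\, d\mu_{std}\, dt = \mathrm{Cal}(g)$. A standard Floer energy/positivity argument rules out alternative chain representatives achieving a lower action at leading order.

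The main obstacle I anticipate is the sharp defect bound $2/d$ in the quasimorphism step. Soft Floer arguments readily give a defect of order $1/d$, but pinning down the constant $2$ requires quantitative analysis of the chain-level cup product in the Heegaard-Floer model of $\mathrm{Sym}^d(S^2)$: one must identify the holomorphic triangles contributing to $[\mathbb{L}_d]\cup[\mathbb{L}_d]$ and show their area is controlled exactly by $\mathrm{area}(S^2) = 2$, rather than by some multiple depending on $d$. A secondary technicality is the $C^0$-extension from Hamiltonian diffeomorphisms to $G$, handled via Hofer-type bounds on spectral invariants, but these are by now standard.
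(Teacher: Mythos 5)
Your construction of $f_d$ and your treatment of the Weyl law essentially match the paper's: the invariant is the Lagrangian Floer spectral invariant of the fundamental class of the monotone torus in $\mathrm{Sym}^d(S^2)$ (one point you should flag is that the symmetric product of an area form is only an orbifold form near the diagonal, and a smoothing due to Perutz is needed before any of this makes sense), nonvanishing of $HF$ is obtained by matching disc potentials with the Clifford torus, the extension to all of $G$ is by uniform continuity, and the Weyl law follows from the Lagrangian control property
\[
\frac{1}{d}\sum_i \int \min_{L_i} H(t)\,dt \;\le\; c_{\mathcal L}(H) \;\le\; \frac{1}{d}\sum_i \int \max_{L_i} H(t)\,dt,
\]
which is exactly your Riemann-sum argument (the paper's control \eqref{eqn:control} is stated on the curves $L_i$ rather than on the complementary annuli, but equidistribution gives the same limit).

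Where you diverge, and where the genuine gap sits, is the quasimorphism property. The paper does not run a direct triangle-product estimate on $HF(T_d)$; it uses a closed-open map from the quantum cohomology of $\mathrm{Sym}^d(S^2)=\mathbb{C}P^d$ to $HF(T_d)$ and compares the link spectral invariant with the quasimorphism already known to come from $QH(\mathbb{C}P^d)$, the defect $2/d$ being inherited from that comparison together with homogenization. Your proposed bound --- that the discrepancy in $[\mathbb{L}_d]\cup[\mathbb{L}_d]$ is controlled by ``the area of the largest holomorphic triangle contributing to the product, which is at most the total area of $S^2$'' --- does not work as stated: holomorphic triangles in $\mathbb{C}P^d$ with boundary on $\mathbb{L}_d$ have symplectic area quantized by Maslov index times the monotonicity constant, and nothing ties the contributing Maslov indices to a $d$-independent bound, nor their areas to the area of the base sphere, without a substantive further argument. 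Note also that for the unit class the triangle inequality gives subadditivity with \emph{no} error; the defect really enters through the reverse inequality (controlling $c(g)+c(g^{-1})$ via duality and the spectral norm), which is precisely what the comparison with the quantum-cohomology quasimorphism is designed to handle. So the obstacle you flag at the end is the real one, and the remedy in \cite{cghmss} is not a sharper area estimate but the algebraic detour through the closed-open map.
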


\begin{remark}
In fact, it had also been an open question of interest, called the ``Quasimorphism question" \cite{mcduffsalamon},  whether any non-trivial quasimorphisms on $G$ exist at all.  
\end{remark}


Here, $Cal(g)$ denotes the aformentioned Calabi invariant of diffeomorphisms, defined as follows.  Let $g \in G$ be in addition smooth.  Then it is known in this setting that $g$ is Hamiltonian, as defined above, i.e. $g$ is the time-$1$ flow associated to some (possibly time varying) Hamiltonian $H(t)$; because $g$ is compactly supported, we can demand that $H = 0$ near the boundary.  We now define $Cal(g) := \int \int H dxdy dt.$
One can show that this does not depend on the choice of such $H$, after which it is not too hard to see that it is a surjective homomorphism.
Before continuing, let us give an example of a representative element of $G$ for which it was not clear how to extend Calabi.

\begin{example}
\label{ex:infinite}
Consider the {\em twist map} $T_f$ defined by fixing $0$ and, in polar coordinates $(r,\theta) \in (0,1) \times [0,2\pi]$,   sending $(r,\theta) \to (r, \theta + f(r))$
where $f:(0,1) \to \mathbb{R}_{\ge 0}$ is some function that is zero near $1$.  Then $T_f \in G$, but it is not generally smooth; when $f$ is rapidly increasing with a singularity at $0$,  we call $T_f$ an {\em infinite twist}.
\end{example}

\subsubsection{Non-simplicity}
\label{sec:nonsimp}

We will say more about the definition of the link spectral invariants and their properties soon, but let us first explain how to resolve the Simplicity Conjecture and the  Calabi extension question from the above Theorem.  In other words, we explain the proof of the following:

\begin{theorem}
(i) The group $G = \text{Homeo}_c(D^2,\mu_{std})$ is not simple.  (ii)  The Calabi homomorphism $Cal: \text{Diffeo}_c(D^2,\mu_{std})$ extends as a group homomorphism to $G$. 
\end{theorem}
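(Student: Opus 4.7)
The plan is to exploit the fact that the $f_d$ form an asymptotic homomorphism family — their defects $2/d$ tend to zero — to extract both a proper normal subgroup (for (i)) and, in the limit, a genuine homomorphism (for (ii)). The first move is to introduce the candidate normal subgroup
$$N := \{\, g \in G : \sup_d |f_d(g)| < \infty \,\}.$$
Closure under multiplication follows from the defect bound $|f_d(ab)| \le |f_d(a)| + |f_d(b)| + 2/d$; closure under inverses from homogeneity $f_d(g^{-1}) = -f_d(g)$; and normality from the standard fact that every homogeneous quasimorphism is conjugation invariant (apply the defect bound to $(aga^{-1})^n = a g^n a^{-1}$, divide by $n$, and send $n \to \infty$).

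For part (i), the subgroup $N$ is nontrivial: by the Weyl law in Theorem~\ref{thm:quasi}, any smooth compactly supported area-preserving $g$ with $Cal(g) = 0$ satisfies $f_d(g) \to 0$ and hence lies in $N$. The remaining task, and the main obstacle of the whole argument, is to show $N \ne G$ by exhibiting an element whose link spectral invariants $f_d$ are unbounded in $d$. The natural candidate is an infinite twist $T_f$ from Example~\ref{ex:infinite} whose profile is so singular near the origin that its formal Calabi integral diverges. Establishing $\sup_d f_d(T_f) = \infty$ requires opening the construction of the link spectral invariants — they are filtered Floer-theoretic invariants in the spirit of \S\ref{sec:spectral} — and tracking how they read off the rotation profile of a twist map; this is where the real work lies.

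For part (ii), I would fix a non-principal ultrafilter $\omega$ on $\mathbb{N}$ and set
$$\widetilde{Cal}(g) := \lim_{d \to \omega} f_d(g) \quad \text{for } g \in N.$$
The ultrafilter limit exists in $\mathbb{R}$ because the sequence is bounded; it is an honest group homomorphism because the defect $2/d \to 0$ forces $|f_d(gh) - f_d(g) - f_d(h)| \to 0$; and it restricts to $Cal$ on smooth elements directly by the Weyl law. To extend to all of $G$, I would apply the quasimorphism inequality three times to a commutator $[a,b] = aba^{-1}b^{-1}$ and combine with homogeneity to obtain $|f_d([a,b])| \le 6/d$. Hence $[G,G] \subseteq N$ and $\widetilde{Cal}$ vanishes on $[G,G]$, so it descends to a homomorphism from the subgroup $N/[G,G]$ of the abelian group $G/[G,G]$ into $\mathbb{R}$. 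Since $\mathbb{R}$ is a divisible and therefore injective abelian group, this homomorphism extends to all of $G/[G,G]$; pulling back along the quotient $G \to G/[G,G]$ produces the required extension $\widetilde{Cal}: G \to \mathbb{R}$ of the Calabi homomorphism.
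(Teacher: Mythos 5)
Your part (ii) is correct and is essentially the paper's argument in different packaging: where you take an ultrafilter limit of the bounded sequence $(f_d(g))_d$ and then invoke divisibility of $\mathbb{R}$ to extend off a subgroup, the paper sends $g$ to the class of $(f_1(g),f_2(g),\dots)$ in $\mathbb{R}^{\mathbb{N}}/\!\sim$ (sequences identified when their difference tends to $0$) and chooses, via Zorn's lemma, a group-theoretic section of the diagonal $\Delta:\mathbb{R}\to\mathbb{R}^{\mathbb{N}}/\!\sim$. Both uses of choice are the same in spirit, and your verification that the vanishing defect makes the limit a homomorphism restricting to $Cal$ on smooth elements is exactly right.

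Part (i) as you have written it, however, has a genuine gap, and one that is avoidable. You chose $N=\{g: \sup_d|f_d(g)|<\infty\}$, and properness of this subgroup requires an element of $G$ with \emph{unbounded} link spectral invariants. You correctly identify this as ``where the real work lies'' and then do not do it: it does not follow from Theorem~\ref{thm:quasi}, and establishing it for an infinite twist as in Example~\ref{ex:infinite} requires the Lagrangian control estimate \eqref{eqn:control} together with an approximation argument, since such a twist is only a homeomorphism. A priori your $N$ could equal $G$. The paper's subgroup is different precisely to dodge this: it is the kernel of the homomorphism $F:G\to\mathbb{R}^{\mathbb{N}}/\!\sim$, whose properness is immediate from the Weyl law applied to any smooth element with $Cal\neq 0$ (e.g.\ a compactly supported rotation), and whose nontriviality follows since $F$ lands in an abelian group while $G$ is not abelian. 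Note also that your own part (ii) already repairs part (i): the kernel of your extension $\widetilde{Cal}:G\to\mathbb{R}$ is normal, proper because $\widetilde{Cal}$ is surjective (it restricts to $Cal$ on diffeomorphisms), and nontrivial because it contains $[G,G]\neq\{1\}$. So you should either switch to the kernel-type subgroup or deduce (i) from (ii); the unbounded-twist computation is only needed for the finer results of \S\ref{sec:nonsimp} and \S\ref{sec:new}, not for non-simplicity of $G$ itself.
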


Of course, item $(ii)$  implies item $(i)$, but as we will see in the proof, the extension in (ii) is quite non-canonical, so in the author's view it is pedagogically sounder to separate these statements and first explain $(i)$.

\begin{proof}

(i) Define $R' = \mathbb{R}^{\mathbb{N}}/\sim$, where $s \sim t$ if and only if $\lim s - t = 0$.  There is a natural map  
\[ F: G \to R', \quad \quad g \to (f_1(g), f_2(g), \ldots, ).\]
By Theorem~\ref{thm:quasi} this is a homomorphism.  Let $N$ denote its kernel.  This is a normal subgroup; it is also proper: it follows from Theorem~\ref{thm:quasi} that $N \ne G$, and  $N \ne 0$, since $G$ is not abelian.  Hence  $G$ is not simple.

(ii) There is a natural map 
\[ \Delta: \mathbb{R} \to  \mathbb{R}^{\mathbb{N}}/\sim, \quad \Delta(t) = (t, t, \ldots).\]
  Choose a section $s$ of this map by way of Zorn's lemma.  Then $s \circ F$ extends the Cal by Theorem~\ref{thm:quasi}, 

\end{proof}

\begin{remark}
As one sees from the proof, though the extensions used to establish item (ii) answer Fathi's question, they have a rather arbitrary form.  In fact, a certain amount of choice can not be avoided if one seeks an extension taking values in $\mathbb{R}$, though such an extension is not required to prove the Simplicity Conjecture.  Indeed, as explained to the author by Rosendal, there are models of ZF where the axiom of choice is false and any homomorphism $G \to \mathbb{R}$ would automatically be continuous; as we stated above, there can not be a continuous extension of Calabi.  As one sees from the proof, it is better to think of the extension of Calabi as taking values in $R'$.  We will say more about a very recent ``universal" extension due to Edtmair-Seyfaddini in \S\ref{sec:es}.

\end{remark}

\subsubsection{Construction of the link spectral invariants}

Let us now give a sense of how the link spectral invariants are constructed in \cite{cghmss}.

\begin{remark}
Before getting into details,  let us make some remarks about continuous symplectic geometry.  A major challenge in applying symplectic techniques to Fathi's question is that this is a question about {\em homeomorphisms}, while the notion of symplectic map usually requires differentiability.  In fact, there is an extensive literature on applying symplectic techniques to continuous phenomena, see e.g. the references in \cite{simp}.  This is a beautiful story that is beyond the scope of our notes, but in what follows ideas from this field will play a key role.   One theme that we have already seen in the definition of ECH spectral invariants in the degenerate case is that quantities defined by Floer theory tend to have good continuity properties; this will come up again, though deeper techniques from continuous symplectic geometry are also required in what follows.
\end{remark}

Continuing now with the construction,  the general strategy, crucial in many constructions in continuous symplectic geometry, is that we first define them for diffeomorphisms and then extend them to homeomorphisms by continuity.
To start, we embed $D^2$ into $S^2$ as the southern hemisphere.  A basic principle in symplectic geometry is that it is very profitable to study {\em Lagrangian} submanifolds $L \subset M$; these are submanifolds on which the restriction of the symplectic form vanishes.  We are particularly interested in {\em non-displaceable} ones, i.e. manifolds that cannot be displaced from themselves by a Hamiltonian flow.  These have their own cohomology theory $HF(L)$ called {\em Lagrangian Floer cohomology} and in favorable situations there are associated spectral invariants after a choice of Hamiltonian on $M$, defined via a construction analogous to the construction in \S\ref{sec:spectral}.  On $S^2$, there is only one non-displaceable Lagrangian up to equivalence, namely an equator; we want a whole sequence of invariants so we would like to find more.  The idea in the construction of the link spectral invariants is then to find more Lagrangians as follows.  Call a collection $\mathcal{L}$ of $k$ disjoint curves in $S^2$ a {\em Lagrangian link}, and call this link {\em monotone} if it decomposes $S^2$ into segments of equal areas.   Fix, now, a choice of monotone Lagrangian link $\mathcal{L} = L_1 \sqcup \ldots L_k$.  We now consider the submanifold $T_k$ of the $k^{th}$-symmetric product $Sym^k(S^2)$ induced by the projection
\[  L_1 \times \ldots \times L_k \subset  (S^2)^k \to Sym^k(S^2).\]
This is a Lagrangian torus.  To be precise, the natural structure on $Sym^k(S^2) = \mathbb{C}P^k$ induced by the symplectic form on $S^2$ gives it the structure of a symplectic orbifold; however, by a construction due to Perutz, we can smooth this form out in a neighborhood of the diagonal to get a smooth symplectic form on $\mathbb{C}P^k$, for which $T_k$ is Lagrangian.   The monotonicity condition on the link ensures that $T_k$ is non-displaceable and monotone.   Let us now try to give a very impressionistic sense for how the $f_k$ are defined from the $T_k$:

{\em Step 1.}  We need to show that $HF(T_k) \ne 0$.   There is a much studied Lagrangian torus $C_k$ in $\mathbb{C}P^k$ called the Clifford torus; this is a fiber of the usual momentum map on $\mathbb{C}P^k$ and its Lagrangian Floer cohomology is the same as its singular cohomology.  We show that in fact $HF(T_k) = HF(C_k)$.  The Floer cohomology is defined by counting pseudoholomorphic discs; these discs are encoded in a ``disc potential", and to prove the claimed isomorphism of Floer cohomologies, it suffices to show these have the same disc potential.  The disc potential of the Clifford torus is well-known; the relevant discs in our case can be understood by the ``tautological correspondence", relating them to maps into $S^2$, and from this description the potential can be calculated and shown to agree with the Clifford one.

{\em Step 2.}  We now want to construct spectral invariants.  The construction is completely analogous to the construction in \S\ref{sec:spectral}.  As we said above, to specify the filtration on the Floer complex, we have to specify a Hamiltonian on $Sym^k(S^2)$.  Given an area-preserving diffeomorphism $g$, we write $g$ as the time $1$-flow of a Hamiltonian $H$;  this induces a Hamiltonian $Sym^k(H)$ on $Sym^k(S^2)$.   Thus to each non-zero class in $HF(T_k)$ we get an associated spectral invariant of $g$; by Step 1, $HF(T_k) = H_*(T_k)$ and we define $f_k(g)$ to be the spectral invariant associated to the fundamental class under this equivalence.  

{\em Step 3.}  We show that the $f_k$ are uniformly continuous --- we say more about this in the next section --- and so extend to homeomorphisms.

\begin{remark}
Our definition of link spectral invariants has a number of inspirations.  Most immediately, Polterovich-Sheulukhin defined closely related invariants in the case of $S^2$ \cite{ps} using a version of orbifold Floer cohomology that builds on work of Mak-Smith \cite{ms}, and used them to give another proof of the Simplicity Conjecture.  Going back earlier, our definition of link spectral invariants is reminiscent of similar constructions in Heegaard Floer cohomology, see e.g. \cite{osvath}.
\end{remark}

\subsubsection{Proofs of the key properties.}

Let us now try to give an idea of how the key properties of the link spectral invariants are proved:

\begin{proof}[Sketch of proofs]

{\em Weyl Law.}  The basic geometry is that as the number of components of the Lagrangian link tends to $+\infty$, the link sees the entire manifold.  To implement this, the key point is a {\em Lagrangian control} property that states that if $\mathcal{L} = \sqcup_i L_i$ is a monotone Lagrangian link and $H$ is a (possibly time-varying) Hamiltonian then the corresponding link spectral invariant $c_\mathcal{L}$ satisfies
\begin{equation}
\label{eqn:control}
  \frac{1}{k} \sum_i \int min_{L_i} H(t) dt \le c_\mathcal{L}(H) \le  \frac{1}{k} \sum_i \int max_{L_i} H(t).
  \end{equation}
The Weyl law then follows from this via the same kind of argument that shows that a Riemann sum converges to the corresponding integral.  The proof of the Lagrangian control property follows a standard line for Lagrangian spectral invariants associated to monotone Lagrangians and so we will skip it.


{\em Uniform Continuity.}   This uses some techniques from continuous symplectic geometry that are not at all related to Weyl laws and so beyond the scope of these notes; we will skip it for brevity.


{\em Quasimorphism.}  This also uses some properties that are not particularly related to Weyl laws and so also mostly beyond the scope of these notes.   The key point is an ``closed-open" map, relating the Floer cohomology of the Lagrangians in $\mathbb{C}P^n$ constructed via the link spectral invariants to the Floer theory of $\mathbb{C}P^n$; it was previously known that the latter can be used to define a quasi-morphism, and comparing the  link spectral invariants to this quasimorphism via the closed-open map gives Theorem~\ref{thm:quasi}.

\end{proof}

\subsection{Other surfaces and McDuff's perfectness question }

For other surfaces, there is a mostly parallel story.  To elaborate, let $S$ be such a surface, possibly with boundary, and let $\text{Homeo}_0(S,\omega)$ be the group of area-preserving homeomorphisms that are the identity near the boundary.  Fathi has constructed a {\em mass-flow} homomorphism $Homeo_0(S,\omega) \to H_1(S)/\Gamma,$
where $\Gamma$ is a certain discrete subgroup that we will not say much more about here, see \cite{fathi}.
The analogue of the above results for other surfaces is now the following:

\begin{theorem}\cite{cghmss}
The kernel of the mass-flow homomorphism is never simple.
\end{theorem}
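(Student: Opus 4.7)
The plan is to mirror the non-simplicity argument from \S\ref{sec:nonsimp}, replacing the Calabi homomorphism with a family of Lagrangian link invariants tailored to $S$. First I would construct, for each integer $d \ge 1$, a monotone Lagrangian link $\mathcal{L}_d \subset S$ partitioning $S$ into $d$ regions of equal area; on a general surface (possibly with boundary or higher genus) this requires combinatorial care but can be arranged using suitable embedded curves away from the boundary region. Passing to Perutz's smoothed symmetric product $\text{Sym}^d(S)$, the link defines a monotone Lagrangian torus $T_d$; one needs to verify $HF(T_d) \ne 0$ by a disc-potential computation in the style of Step 1 of \S\ref{sec:calabi}, transported via the tautological correspondence into the Heegaard-Floer-style setting dictated by the topology of $S$. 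For $g$ in the mass-flow kernel, an appropriate Hamiltonian isotopy from the identity to $g$ exists (this being essentially what the mass-flow kernel is designed to capture), and the spectral construction of \S\ref{sec:spectral} yields a link spectral invariant $f_d(g)$.

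The key properties to establish are the analogues of Theorem~\ref{thm:quasi}: each $f_d$ should be a $C^0$-continuous homogeneous quasimorphism on $\ker(\text{mass-flow})$ with defect $O(1/d)$, and should satisfy a Weyl law
\[ \lim_{d \to \infty} f_d(g) = \mathrm{Cal}_S(g) \]
for smooth $g$, where $\mathrm{Cal}_S$ denotes an appropriate average-Hamiltonian functional on the kernel. The Weyl law should follow, exactly as in the disc case, from a Lagrangian control inequality analogous to \eqref{eqn:control} combined with a Riemann-sum argument, while the quasimorphism estimate should follow from a closed-open map relating $HF(T_d)$ to the ambient Floer theory of $\text{Sym}^d(S)$.

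Granted these inputs, the rest of the argument is purely formal and parallels the proof of item (i) in \S\ref{sec:nonsimp}: the map
\[ F: \ker(\text{mass-flow}) \longrightarrow \mathbb{R}^{\mathbb{N}}/\!\sim, \qquad g \mapsto (f_1(g), f_2(g), \ldots), \]
is a bona fide group homomorphism because the quasimorphism defects tend to $0$. Its kernel $N$ is a normal subgroup of $\ker(\text{mass-flow})$; it is non-trivial because one can produce infinite-twist-type elements modeled on Example~\ref{ex:infinite}, supported in a small embedded disc in $S$, whose $f_d$-values are bounded uniformly in $d$ and hence collapse in the quotient; and it is proper because the Weyl law separates any two smooth Hamiltonian diffeomorphisms with distinct $\mathrm{Cal}_S$-values.

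The main obstacle will be the Floer-theoretic input: proving $HF(T_d) \ne 0$ when $S$ has positive genus or punctures, where $\text{Sym}^d(S)$ is no longer $\mathbb{C}P^d$, its symplectic geometry is substantially richer, and the disc potential for $T_d$ must be analyzed in a genuinely Heegaard-Floer setting rather than by the Clifford-torus comparison available on $S^2$. A secondary but still substantial technical point is the $C^0$-continuity of $f_d$ on the mass-flow kernel, since the continuous-symplectic arguments used in the disc case must be adapted to accommodate non-contractible loops and the boundary behavior permitted on $S$.
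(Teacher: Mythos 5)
Your overall plan --- construct link spectral invariants on $S$, verify a quasimorphism property with defect $O(1/d)$, and then run the homomorphism-to-$\mathbb{R}^{\mathbb{N}}/\sim$ argument of \S\ref{sec:nonsimp} --- breaks at a point you did not flag. For surfaces other than the disc and the sphere, the link spectral invariants do \emph{not} satisfy the quasimorphism property. The estimate \eqref{eqn:quasi} in Theorem~\ref{thm:quasi} is obtained by comparing the link invariants, via a closed-open map, to the Floer theory of the ambient manifold $Sym^d(S^2) = \mathbb{C}P^d$, where a Calabi quasimorphism is available; for positive genus, or for surfaces with boundary other than the disc, the ambient symmetric product is a different symplectic manifold and no such comparison is known. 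Consequently the map $F: g \mapsto (f_1(g), f_2(g), \ldots)$ cannot be shown to land homomorphically in $\mathbb{R}^{\mathbb{N}}/\sim$, and the formal part of your argument --- which is the entire engine producing the normal subgroup --- collapses. This is not the ``secondary technical point'' you identify; it is the central structural obstruction.

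The proof in \cite{cghmss} therefore takes a different route, following the strategy of the original proof of the Simplicity Conjecture in \cite{simp}: one explicitly constructs a candidate normal subgroup, the group of finite Hofer energy homeomorphisms, which is essentially the largest normal subgroup on which Hofer's bi-invariant metric can still be defined. One then shows that membership in this subgroup forces the link spectral invariants to remain bounded, and proves properness by exhibiting an infinite twist as in Example~\ref{ex:infinite} with unbounded asymptotics. Your instinct to use infinite-twist elements supported in a small disc is in the right spirit for the properness half of the argument, but the normal subgroup itself must come from the Hofer-metric construction rather than from the kernel of a quasimorphism-induced homomorphism.
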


The proof is somewhat similar to the proof in the case of the disc or sphere explained above.  One difference, however, is that the link spectral invariants no longer satisfy the quasimorphism property.  So, the proof in \cite{cghmss} goes via explicitly constructing a normal subgroup and showing that it is proper.  The construction of this normal subgroup follows the strategy in \cite{simp}, which was in fact the original proof of the Simplicity Conjecture.  

\begin{proof}[Outline of the proof] To briefly explain, the group of Hamiltonian diffeomorphisms of a symplectic manifold has a remarkable bi-invariant metric, called {\em Hofer's metric}; we will not review the definition here,  but the key fact for our purposes is that one can construct a normal subgroup, the group of ``finite Hofer energy homeomorphisms", which is essentially the largest normal subgroup subgroup such that one can still define Hofer's metric.  One then shows that membership in this group implies that the link spectral invariants remain bounded, so properness is proved by showing that a twist map as in Example~\ref{ex:infinite} can have unbounded asymptotics.
\end{proof}

For non-compact surfaces, if one drops the condition that the map is compactly supported then the group is never simple, because it contains the subgroup of compactly supported maps.  Nevertheless, interesting questions remain.   In particular, recall that a group is {\em perfect} if it is equal to its commutator subgroup.   The following is a longstanding question of McDuff's:

\begin{question} \cite{mcduff}
\label{que:mcduff}
Is the group of area-preserving diffeomorphisms of $(\mathbb{R}^2,\omega)$ perfect?
\end{question}

There are two cases to the question, namely the infinite area and the finite area cases.  McDuff shows that in higher dimensions, the analogous groups are perfect.   In \cite{cghsjems}, we answered McDuff's question in the finite area case:

\begin{proof}[Sketch of proof]
Since the symplectic form has finite area, we can embed $(\mathbb{R}^2,\omega)$ in $S^2$ as the complement of the north pole $p_+$.  Consider an infinite twist centered at $p_+$ as in Example~\ref{ex:infinite}.  Its restriction to the complement of $p_+$ is a diffeomorphism.  If it was a commutator, then by the quasimorphism property Theorem~\ref{thm:quasi} its limit in the Weyl law Theorem~\ref{thm:quasi} would remain bounded; this is an easy exercise using the quasimorphism property.  Since one can construct a twist with unbounded asymptotics, the result follows.
\end{proof}


\subsection{The Closing Lemma}
\label{sec:closing}

We will not say a huge amount about this for brevity, but we do want to make a few key points.

\subsubsection{Periodic Floer homology and PFH spectral invariants}  

The starting point for the proof is a cousin of ECH, called {\em periodic Floer homoloy} (PFH), also defined by Hutchings.  This is defined for area-preserving diffeomorphisms $\phi: (\Sigma,\omega) \to (\Sigma,\omega)$
of a closed surface.  To define $PFH(\phi)$, one first forms the mapping torus 
\begin{equation}
\label{eqn:mapping}
Y := \Sigma \times [0,1]_t \sim, \quad \quad (x,1) \sim (\phi(x), 0).
\end{equation}
This has a natural vector field $R := \partial_t$, a natural two-form $\omega_\phi$ induced by $\omega$, a natural two-plane field $V$ given by the vertical tangent bundle, and $\mathbb{R}_s \times Y$ carries a natural symplectic structure $ds \wedge dt + \omega_\phi$.  With this understood, one can now essentially copy the definition of ECH, recall \S\ref{sec:defnech}, to define PFH.

One should note, however, that $R$ is not in general the Reeb vector field for a contact form.  Instead, the natural structure on $Y$ is a pair $(dt,\omega_\phi)$, called a {\em stable Hamiltonian structure}.  In particular, in contrast to ECH, PFH does not have a natural interesting filtration.  An important insight of Hutchings is that to get interesting spectral invariants, one should instead work with a ``twisted" version $\widetilde{PFH}$ of PFH which mimics an analogous construction on ECH;  to define this, we choose a reference loop $\gamma$ in $Y$.   A generator of $\widetilde{PFH}(\phi,\gamma)$ is then a pair $(\alpha,Z)$, where $\alpha$ is a finite set of orbits of $R$, just as in the ECH case, such that $[\alpha] = [\gamma] \in H_1(Y)$, and $Z \in H_2(Y,\alpha,\gamma)$ is a relative homology class from $\alpha$ to $\gamma$.  The differential $\partial$ is defined just as in the ECH case, except that we now keep track of the homology class of the curve: the coefficient of $\partial (\alpha,Z)$ on $(\beta,Z')$ is a count of $I = 1$ currents $C$ from $\alpha$ to $\beta$, such that $[C] + Z' = Z$ in relative homology.  The same arguments as in the ECH case show that the homology is well-defined and arguments of Lee-Taubes \cite{leetaubes} show that it agrees with a version of Seiberg-Witten Floer theory, analogously to \eqref{eqn:echswf}.  The key point is that there is now a natural action filtration given by $\mathcal{A}(\alpha,Z) = \int_Z \omega_\phi.$
Given this, one can define spectral invariants just as in the ECH case and these are the PFH spectral invariants.  There is also a grading $I$ just as in the ECH case.

\subsubsection{The Weyl Law and the proof of the closing lemma}

We can now state the PFH Weyl law, which is the key ingredient in proving the closing lemma.  We think of this as a ``relative" Weyl law.  We will state a slightly simplified version, referring to \cite{cghpz} for the general statement.  Call the pair $(\phi,\gamma)$ {\em PFH monotone} if $c_1(V) + 2 PD([\gamma])$ is proportional to $[\omega_\phi]$ and define the degree of $\gamma$ to be its image in $H_1(S^1) = \mathbb{Z}$ under the projection of the mapping torus to $S^1$  

\begin{theorem} \cite{cghpz}
\label{thm:pfhweyl}
Fix a closed surface with area form $(S,\omega)$ and genus $G$.  Let $\varphi$ be an area-preserving diffeomorphism, fix a Hamiltonian $H$ supported in the complement of PFH monotone reference cycles $\Theta_m$ with degrees $d_m \to \infty$, and let   $\phi' = \phi \circ \psi^1_H$, where $\psi^1_H$ denotes the time-$1$ flow.  Then
\[ \lim_{k \to \infty} \frac{ c_{\tau_m}(\varphi') - c_{\sigma_m}(\varphi) }{d_m + 1 - G} - \frac{I(\tau_m) - I (\sigma_m) }{2 (d_m + 1 - G)^2} = \int_{\Sigma \times [0,1]} H \omega \wedge dt.\]
\end{theorem}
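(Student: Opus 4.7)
The plan is to adapt the Seiberg--Witten theoretic proof scheme of Theorem~\ref{thm:contactweyl} to the twisted PFH setting via the Lee--Taubes isomorphism. Since $\psi^1_H$ is smoothly isotopic to the identity through $\{\psi^t_H\}$, the mapping tori of $\varphi^t := \varphi \circ \psi^t_H$ are canonically diffeomorphic as $t$ varies, and one may regard the passage from $\varphi$ to $\varphi'$ as a deformation of the stable Hamiltonian structure on a fixed three-manifold $Y$. Moreover, because $H$ is supported in the complement of the reference cycles $\Theta_m$, the same cycle $\Theta_m$ serves as a reference for the twisted PFH of every $\varphi^t$, so that $\sigma_m$ and $\tau_m$ can be compared through a continuous family $\sigma^t_m$ of classes.

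First I would set up the two-parameter family of functionals $\mathcal{F}^t_r$ (Chern--Simons--Dirac perturbed by the stable Hamiltonian structure of $\varphi^t$, scaled by $r > 0$). Following the three-step scheme of the contact Weyl law, for each $t \in [0,1]$ I would produce a continuous family of critical points $p^t(r)$ carrying $\sigma^t_m$ so that $v^t(r) := -2\mathcal{F}^t_r(p^t(r))/r \to c_{\sigma^t_m}(\varphi^t)$ as $r \to \infty$. The PFH monotonicity of $\Theta_m$, together with the fact that the degrees $d_m$ tend to infinity, is what guarantees these families exist and, crucially, that the relevant critical points remain reducible up to a threshold $r = r_m$ that grows with $d_m$.

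The heart of the argument is then twofold. On the reducible locus, the critical point equations force $iF_A = -r\,\omega_{\varphi^t} + (\text{correction in } dt)$, so that $\mathcal{F}^t_r$ splits into a piece determined by $\int_\Sigma \omega$, which is $t$-independent, and a Calabi-like piece. Differentiating in $t$ along the reducible family and using $\partial_t \omega_{\varphi^t} = d(H\,dt)$ in an appropriate sense, a direct computation identifies $v^1(r_m) - v^0(r_m)$, after the correct normalization, with $\int_{\Sigma \times [0,1]} H\,\omega \wedge dt$. For the flow from $r = r_m$ to $r = \infty$, the identity $\partial \mathcal{F}^t_r/\partial r = -\tfrac12 E(A(r))$, combined with Taubes' energy and Chern--Simons estimates applied uniformly in $t$, shows the contribution is $o(d_m + 1 - G)$. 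The factor $d_m + 1 - G$ enters as the Riemann--Roch/spectral-flow correction on a genus-$G$ mapping torus of degree $d_m$, and the subtracted term $(I(\tau_m) - I(\sigma_m))/[2(d_m+1-G)^2]$ is exactly the adjustment needed to move between the gradings of $\sigma_m$ and $\tau_m$, extracted from the leading asymptotic $c_\sigma \sim (\text{const})\sqrt{I(\sigma)}$.

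The main obstacle, I expect, lies in extracting the Calabi integral cleanly as the leading difference. Because the area contribution $\int_\Sigma \omega$ is the same for $\varphi$ and $\varphi'$, it cancels in $c_{\tau_m}(\varphi') - c_{\sigma_m}(\varphi)$, and the Calabi-type quantity $\int H\,\omega \wedge dt$ only appears at the next order. Consequently one cannot tolerate the same absolute error size as in Theorem~\ref{thm:contactweyl}: one needs refinements of Taubes' estimates producing errors genuinely of size $o(d_m + 1 - G)$ rather than merely $o(\sqrt{I})$, and these must hold uniformly in the interpolation parameter $t \in [0,1]$. Ensuring this uniformity, and handling intermediate $t$ at which $\Theta_m$ might fail to remain PFH monotone for $\varphi^t$, is where I would expect the substantial technical work to reside.
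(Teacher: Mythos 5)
Your proposal follows essentially the route the paper indicates for Theorem~\ref{thm:pfhweyl}: the survey offers no detailed proof, saying only that the argument runs along the lines of the Seiberg--Witten scheme for Theorem~\ref{thm:contactweyl} (via the Lee--Taubes isomorphism) with new subtleties, and your three-step adaptation --- reducible solutions below a degree-dependent threshold $r_m$, the identity $\partial\mathcal{F}_r/\partial r = -\tfrac{1}{2}E(A(r))$, and uniform control of the large-$r$ flow --- is exactly that scheme, and you correctly isolate the key new difficulty, namely that the cancellation of the leading area term forces error control at the finer scale $o(d_m+1-G)$. One small correction: the grading adjustment $\frac{I(\tau_m)-I(\sigma_m)}{2(d_m+1-G)^2}$ reflects the PFH regime's (approximately) linear relation between action and grading at fixed large degree, roughly $\partial c/\partial I \approx 1/(2(d_m+1-G))$, not the ECH-type $c_\sigma \sim \mathrm{const}\cdot\sqrt{I(\sigma)}$ asymptotic you invoke, which belongs to the fixed-manifold, grading-to-infinity regime of Theorem~\ref{thm:contactweyl}.
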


The proof follows a broadly similar line to the proof in \S\ref{sec:contactweyl}, though there are many subtleties that required new ideas; for a summary, see \cite{cghpz}.  Given this, to prove the Weyl law given this we need just one further ingredient.  

\begin{theorem}
\label{thm:nontriviality}
Let $(\phi,\gamma)$ be PFH monotone, with $\gamma$ of sufficiently large degree.  Then $\widetilde{PFH}(\phi,\gamma) \ne 0$ 
\end{theorem}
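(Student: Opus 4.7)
The plan is to reduce the non-triviality of $\widetilde{PFH}(\phi,\gamma)$ to a non-vanishing statement in Seiberg--Witten Floer cohomology of the mapping torus $Y_\phi$, using the Lee--Taubes isomorphism invoked in the construction of PFH above, which is the PFH analogue of \eqref{eqn:echswf}. Under this isomorphism, the pair $(\phi,\gamma)$ corresponds to a spin$^c$ structure $\mathfrak{s}_\gamma$ on $Y_\phi$ whose first Chern class is determined by $c_1(V)+2\mathrm{PD}([\gamma])$; the PFH monotone hypothesis is precisely the condition that makes $\mathfrak{s}_\gamma$ ``monotone'' in the Seiberg--Witten sense, so that the twisted version of $\widehat{HM}^*(Y_\phi,\mathfrak{s}_\gamma)$ matched by Lee--Taubes is well-defined without appealing to a Novikov completion that could collapse the theory. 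The problem thus becomes: show that for $\gamma$ of sufficiently large degree, $\widehat{HM}^*(Y_\phi,\mathfrak{s}_\gamma)$ is non-zero.

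The key input for the non-vanishing is the same structural result of Kronheimer--Mrowka used in Remark~\ref{rmk:echswf}: for any closed oriented 3-manifold and any spin$^c$ structure, $\widehat{HM}^*$ carries a $U$-tower whose bottom is non-zero as soon as one is outside the ``balanced'' perturbation range. First I would verify, by direct computation in $H_*(Y_\phi)$ using the fibration $Y_\phi \to S^1$ induced by the mapping torus construction \eqref{eqn:mapping}, that as $\mathrm{deg}(\gamma)$ grows, the corresponding spin$^c$ structures $\mathfrak{s}_\gamma$ exhaust an infinite family of \emph{distinct} monotone spin$^c$ structures on $Y_\phi$; in particular, for all sufficiently large degree, one is in the regime where Kronheimer--Mrowka guarantees an infinite $U$-sequence. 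Pulling back this $U$-sequence through Lee--Taubes gives an infinite sequence of non-zero classes in $\widetilde{PFH}(\phi,\gamma)$, which in particular implies $\widetilde{PFH}(\phi,\gamma)\neq 0$.

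The main obstacle I anticipate is the book-keeping required to ensure that the twisted coefficient system built into $\widetilde{PFH}$---the decoration of generators by a relative homology class $Z \in H_2(Y_\phi,\alpha,\gamma)$---is carried through the Lee--Taubes identification in a way that preserves the non-vanishing conclusion. This is a genuine subtlety because the naïve, untwisted PFH does not have an interesting action filtration and the whole point of the twisted construction is to remember these relative classes. On the Seiberg--Witten side the corresponding decoration is a choice of local coefficient system of Novikov type; one has to check that this local system, in the monotone case, does not kill the $U$-tower generators. In practice this should follow from the fact that monotonicity forces the Novikov ring to be essentially a Laurent ring in a single variable, for which $\widehat{HM}^*$ tensored with the coefficient system remains non-zero in the relevant spin$^c$ structures.

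A secondary technical point worth flagging is matching the grading: the PFH grading $I$ is only relative until one fixes a base generator, and one must confirm that the gradings at which the $U$-sequence in $\widehat{HM}^*$ is known to be non-zero correspond, under Lee--Taubes, to gradings that are actually realized by PFH generators when $[\alpha]=[\gamma]$. For $\gamma$ of sufficiently large degree this is automatic because the chain complex $\widetilde{PFC}(\phi,\gamma)$ is non-empty in every grading consistent with the index formula, so the Kronheimer--Mrowka non-vanishing transfers without loss.
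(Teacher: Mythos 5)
Your overall reduction is the same as the paper's: apply the Lee--Taubes isomorphism to translate the claim into non-vanishing of the Seiberg--Witten Floer cohomology of the mapping torus in the spin-c structure determined by $\gamma$, and then appeal to Kronheimer--Mrowka. The gap is in how you justify that non-vanishing. The structural result behind Remark~\ref{rmk:echswf} --- an infinite $U$-tower in $\widehat{HM}$ --- is a statement about \emph{torsion} spin-c structures. Here large degree forces the relevant spin-c structure to be non-torsion (its first Chern class pairs with the fiber class as $2d-(2g-2)$, growing linearly in the degree), and for non-torsion spin-c structures with ordinary coefficients the bar version of $\widehat{HM}$ vanishes and the whole group reduces to the irreducible part, which is killed by adjunction precisely in the large-degree regime you need. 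So there is no off-the-shelf $U$-tower to pull back; your cited "key input" is false as stated. What actually happens --- and what the paper's sketch records --- is that in large degree the irreducibles are excluded, the group with the \emph{monotone local coefficient system} is computed entirely from the reducibles as a coupled Morse homology of the torus of flat connections, and the non-vanishing of that object is a genuine classical algebraic topology fact proved by a spectral sequence argument as in \cite{km}. Relatedly, your third paragraph inverts the role of the twisting: the local system is not a potential obstruction to be checked away but the very mechanism that produces a non-zero group from the reducibles in a non-torsion spin-c structure; "tensoring $\widehat{HM}$ with the coefficient system" is not meaningful here because the untwisted group you would be tensoring is zero.
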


We should note that an assumption on $(\phi,\gamma)$ is certainly necessary for such a non-vanishing theorem.  For example, an irrational shift of a $2$-torus has no periodic points at all.  The proof of Theorem~\ref{thm:nontriviality} uses the Lee-Taubes isomorphism: via the analogue of \eqref{eqn:echswf}, it is equivalent to prove non-vanishing of the corresponding Seiberg-Witten Floer theory; it is known that in sufficiently large degree this Floer group is determined by the reducibles and can in fact be described in terms of classical algebraic topology.  Hence the theorem reduces to a classical topology fact which can be proved by a spectral sequence argument similar to the argument in \cite{km}.

We can now give the promised proof of the closing lemma, which essentially reproduces the Irie argument.  In other words, we prove the following:

\begin{theorem}
A $C^{\infty}$-generic area preserving diffeomorphism of a closed surface has a dense set of periodic points.
\end{theorem}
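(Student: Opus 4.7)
The plan follows Irie's scheme from Section~\ref{sec:irie}, now with the PFH Weyl law (Theorem~\ref{thm:pfhweyl}) and PFH non-vanishing (Theorem~\ref{thm:nontriviality}) replacing the ECH inputs. First, I would reduce to a local statement by a Baire category argument: fix a countable basis $\{U_n\}$ of open sets of $\Sigma$ and note that the set of $\phi$ having a periodic point in $U_n$ is $C^\infty$-open (a nondegenerate periodic point persists under perturbation, and one can always arrange nondegeneracy by a further $C^\infty$-small perturbation). Hence it suffices to show that this set is $C^\infty$-dense; that is, given an area-preserving $\phi$ and a nonempty open $U\subset\Sigma$, I must produce an arbitrarily $C^\infty$-small perturbation $\phi'$ of $\phi$ with a periodic orbit meeting $U$.

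Next, I would set up the perturbation family exactly as Irie does. Choose a smooth non-negative $H$ on $\Sigma\times[0,1]$, not identically zero, supported in $U\times[0,1]$, with $\int H\,\omega\wedge dt$ as small as we like, and set $\phi_t:=\phi\circ\psi^1_{tH}$ for $t\in[0,1]$. By Theorem~\ref{thm:nontriviality}, for each sufficiently large admissible degree $d_m$ there is a nonzero PFH class $\sigma_m$, so the PFH spectral invariants $c_{\sigma_m}(\phi_t)$ are defined, provided the reference cycles $\Theta_m$ are chosen disjoint from $U\times[0,1]$ (which is automatic up to isotopy, since $U$ is a proper open subset of $\Sigma$). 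Applying Theorem~\ref{thm:pfhweyl} to the pair $(\phi,\phi_1)$, the normalized difference of spectral invariants converges to $\int H\,\omega\wedge dt>0$; so for $m$ large enough we obtain the strict inequality $c_{\sigma_m}(\phi_1)>c_{\sigma_m}(\phi_0)$.

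The final step is to convert this strict increase into a periodic orbit through $U$. As in Irie's argument, the function $t\mapsto c_{\sigma_m}(\phi_t)$ is continuous (this is a standard property of Floer-theoretic spectral invariants, proved along the same lines as continuity for ECH spectral invariants under $C^0$ deformation), while simultaneously taking values in the action spectrum of $\phi_t$. Suppose, for contradiction, that no $\phi_t$ has a periodic orbit meeting $U$. Then the periodic orbits of $\phi_t$ are identical to those of $\phi$ (since the perturbation is supported in $U$), and the set $\mathcal{A}_t$ of actions of PFH generators depends on $t$ only through an affine continuous shift coming from the change of $[\omega_{\phi_t}]$ along a fixed family of cappings; one argues that $\mathcal{A}_t$ varies continuously as a measure-zero subset of $\mathbb{R}$ in such a way that $c_{\sigma_m}(\phi_t)$ is forced to be constant in $t$, contradicting the Weyl law inequality just obtained.

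The main obstacle is precisely this last measure-zero/rigidity step. In Irie's ECH setup, the action spectrum is visibly $\{\sum m_i T_i\}$, so it is countable and the rigidity is immediate. In PFH, a generator carries a relative $H_2$ class, and the action $\int_Z \omega_{\phi_t}$ involves the cohomology class $[\omega_{\phi_t}]$ on the mapping torus, which itself drifts with $t$. One must therefore check carefully that this drift can be decoupled from the underlying orbit data, so that, provided the set of orbits is fixed, the achievable action values trace out a (possibly translating) measure-zero subset whose continuous intersection with the value of a continuous function forces that function to be locally constant. Once that is in hand, the rest of the argument is essentially formal.
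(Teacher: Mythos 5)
Your proposal follows the paper's argument in all essentials: it is Irie's scheme run with the PFH Weyl law (Theorem~\ref{thm:pfhweyl}) and the non-vanishing result (Theorem~\ref{thm:nontriviality}) in place of the ECH inputs, including the reduction to a local density statement, the positive-Calabi perturbation supported in $U$, the resulting strict increase of a spectral invariant, and the continuity/measure-zero rigidity contradiction. You also correctly identify that the delicate point, compared with the ECH case, is that the PFH action spectrum involves relative homology classes and the drifting cohomology class $[\omega_{\phi_t}]$, so the ``spectral invariant takes values in a measure-zero set, hence is locally constant'' step needs care; this matches where the real work lies.

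There is, however, one step you skip that would make the argument fail as written for an arbitrary $\phi$: you invoke Theorem~\ref{thm:nontriviality} as if PFH monotone reference cycles $\Theta_m$ of arbitrarily large degree always exist for the given $\phi$. The monotonicity condition requires $c_1(V)+2\,\mathrm{PD}([\gamma])$ to be proportional to $[\omega_\phi]$, and for a general area-preserving diffeomorphism (e.g.\ one whose mapping-torus class $[\omega_\phi]$ is irrational) there need be no such $\gamma$ of large degree, so neither Theorem~\ref{thm:nontriviality} nor Theorem~\ref{thm:pfhweyl} applies. The paper handles this by a preliminary $C^\infty$-small approximation of $\phi$ by some $\phi'$ admitting monotone pairs $(\phi',\gamma_d)$ with $d\to\infty$; since the goal is only a density statement, this costs nothing, but it must be done before the rest of your argument can start. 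A second, more minor, point: the Weyl law as stated carries a grading correction term $\bigl(I(\tau_m)-I(\sigma_m)\bigr)/\bigl(2(d_m+1-G)^2\bigr)$, so to conclude the strict inequality $c_{\tau_m}(\phi_1)>c_{\sigma_m}(\phi_0)$ you should specify that the classes $\sigma_m,\tau_m$ are chosen so that this term is controlled (e.g.\ corresponding classes of equal grading), rather than passing over it silently.
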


\begin{proof}[Sketch of Proof]
Given $\phi$, a straightforward computation shows that we can first approximate it by some $\phi'$ such that $\phi'$ admits PFH monotone pairs $(\phi',\gamma_d)$ with $d \to \infty$.  One now mimics the Irie argument: it suffices to show that given $U$ an open disc, we can perturb $\phi'$ so that it has a periodic point in $U$; we do this by composing $\phi'$ with a small Hamiltonian diffeomorphism supported in $U$ with positive Calabi invariant and arguing that if no periodic point appeared under composition with the time $t$-flow, each spectral invariant would be the same, in contradiction to the Weyl law Theorem~\ref{thm:pfhweyl}.
\end{proof}

\begin{remark}
Edtmair-Hutchings simultaneously and independently proved a related Weyl law in \cite{eh} and also used this to prove the closing lemma, with further refinements, assuming the existence of ``$U$-cyclic" classes in PFH; these were shown to exist in the monotone case (which suffices to prove the closing lemma) in \cite{cghpz}.

\end{remark}









\section{More recent developments}
\label{sec:ch3}

We now briefly survey some recent related developments.

\subsection{More elementary spectral invariants} 

The link, ECH, and PFH spectral invariants are all defined in terms of Floer homology.  However, it turns out that one can define related invariants using only pseudoholomorphic curves.  Specifically, Hutchings defines {\em elementary ECH spectral invariants} $c^{elem}_k$ in \cite{hutchingsjmd, hutchingspnas} and shows that these also satisfy a Weyl law like that in Theorem~\ref{thm:contactweyl}.  (It should be noted that the proof of this Weyl law uses Theorem~\ref{thm:contactweyl}). 
Essentially, $c^{elem}_k$ is the minimum amount of action required so that for any $J$, there is a $J$-holomorphic curve in the symplectization passing through any collection of $k$ marked points.  Edtmair then similarly defines {\em elementary PFH spectral invariants} \cite{e} and shows that these can replace the PFH spectral invariants in the proof of the Simplicity Conjecture.   Both of these constructions are inspired by work of McDuff-Siegel, who introduced the idea of defining analogues of Floer-theroetic invariants this way in \cite{mcduffsiegel}.

\subsection{The subleading asymptotics and applications}
\label{sec:subl}

In view of the above Weyl laws, it is natural to study the {\em subleading asymptotics}.    Many questions remain about what is true in this case but there have also been some interesting results and applications.

\subsubsection{Packing stability, complicated hidden boundaries, and the fractal Weyl law}

 Let us make this precise by first focusing on the case of ECH capacities.  Let $\text{Vol}$ denote the volume and define 
\[ e_k : = c_k - \sqrt{2 k \text{Vol}}.\]
It is then natural to ask about asymptotics of the $e_k$.  
We start by explaining an application to ``packing stability".  Recall from $\S\ref{sec:packing}$ that the packing numbers of the ball stabilize at $1$ for $k \ge 9$, a striking contrast from the Euclidean case.  In fact, this is a quite general phenomenon, called {\em packing stability}: celebrated work of Biran \cite{biran2} showed that the same holds for any closed symplectic $4$-manifold $(M,\omega)$ with $[\omega] \in H^2(M;\mathbb{Q})$; Buse-Hind \cite{bh} extended this result to arbitrary dimension, and Buse-Hind-Opshtein \cite{bho} extended Biran's result to all closed $4$-manifolds.  After this, it was natural to study the case with boundary and it had been an open question whether there were any symplectic manifolds at all for which packing stability failed.    The packing numbers are invariants of the interior, so the question naturally connects to the Eliashberg-Hofer question from \S\ref{sec:hidden} about the extent to which the boundary sees the interior.

Returning now to the $e_k$, the following is the starting point of \cite{cghjems}: {\em if the $e_k(X) \to -\infty$, then packing stability fails for $X$}.  This follows from the Monotonicity Property \eqref{eqn:mono}, because for any finite union of balls, the $e_k$ are readily shown to remain bounded in $k$ by Example~\ref{ex:ball} and the Disjoint Union Property \eqref{eqn:disjoint}.  The paper \cite{cghjems} then shows that the domains $X_p := \lbrace (z_1,z_2) |  |z_2|^2 < (1 + |z_1|^2)^{-p} \rbrace $ with $1 < p < 2$  have the property that the $e_k(X_p)$ are $O(k^{1/2p})$ as $k \to \infty$, approaching $-\infty$.  Thus packing stability fails for the $X_p$.  

In addition, these domains are shown to be symplectomorphic to bounded domains in $\mathbb{R}^4$. {\em What does their image look like?}  It is shown in  \cite{cghjems} that the condition on the growth rate of the $e_k$ forces any symplectomorphic image of the $X_p$ to have complicated boundary.  More precisely, the following ``fractal Weyl law" is proved:

\begin{theorem} \cite{cghjems}
Let $Z$ be a relatively compact and open subset of a symplectic four-manifold.  Then the inner Minkowski dimension $dim_M(\partial Z)$  of $\partial Z$ satisfies
\[ dim_M(\partial Z) \ge 2 + 4 \limsup_{k \to \infty} \left( \frac{ln( - e'_k(Z))}{ln(k)}\right),\]
where $e'_k(Z) = min(e_k(Z),0)$.
\end{theorem}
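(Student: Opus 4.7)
The plan is to prove this by contradiction. Suppose $\dim_M(\partial Z) < 2 + 4\alpha$, where $\alpha$ denotes the $\limsup$ on the right-hand side. By the definition of inner Minkowski dimension, there exist $\beta > 2 - 4\alpha$ and $C_0 > 0$ such that $v_\epsilon := \mathrm{Vol}(N_\epsilon(\partial Z) \cap Z) \le C_0 \epsilon^\beta$ for all sufficiently small $\epsilon > 0$. The target is the quantitative Weyl error estimate
$$-e_k(Z) \le C\, k^{(2-\beta)/4} \text{ for large } k,$$
which, since $(2-\beta)/4 < \alpha$, contradicts the definition of $\alpha$ as a $\limsup$ of $\ln(-e'_k(Z))/\ln k$ and thereby completes the proof.

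To establish this estimate, I would pack $Z$ with disjoint symplectic balls via a multiscale Whitney-type decomposition. For a cutoff $\epsilon > 0$, decompose $\{z \in Z : \mathrm{dist}(z, \partial Z) > \epsilon\}$ into dyadic cubes $\{Q_i\}$ with side $\ell_i$ comparable to $\mathrm{dist}(Q_i, \partial Z)$, all with $\ell_i \ge \epsilon$. In each $Q_i$ one can pack (via Darboux charts) symplectic balls of total capacity $a_i$ comparable to $\ell_i^2$ and total symplectic volume close to $\mathrm{Vol}(Q_i)$. Applying the Disjoint Union Axiom, the Scaling Axiom, and Proposition~\ref{prop:echell} (which gives $c_j(B_4(a)) = a\, d(j)$ with $d(j) = \sqrt{2j} + O(1)$), and choosing integer partitions $\sum k_i = k$ with $k_i$ close to $k a_i^2/V'$ where $V' := \sum_i a_i^2$, yields the lower bound
$$c_k(Z) \ge \max_{\sum k_i = k} \sum_i a_i\, d(k_i) \ge \sqrt{2k V'} - C_1 \sum_i a_i.$$

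The Whitney structure combined with the hypothesis $v_\epsilon \le C_0 \epsilon^\beta$ supplies two key estimates. First, the total packed volume satisfies $V' \ge \mathrm{Vol}(Z) - O(v_\epsilon)$. Second, if $N_j$ counts Whitney cubes of side $2^{-j}$, then $N_j \cdot 2^{-4j} \le v_{2^{-j}} \le C_0\, 2^{-j\beta}$, so $N_j \le C \cdot 2^{j(4-\beta)}$; the total capacity error is therefore
$$\sum_i a_i \le C \sum_{j\, :\, 2^{-j} \ge \epsilon} N_j\cdot 2^{-2j} \le C' \sum_{j\, :\, 2^{-j} \ge \epsilon} 2^{j(2-\beta)} \le C'' \epsilon^{\beta-2}.$$
Combining these two bounds gives $-e_k(Z) \le C\sqrt{k}\, v_\epsilon + C'' \epsilon^{\beta-2} \le C\sqrt{k}\,\epsilon^\beta + C'' \epsilon^{\beta-2}$, and the optimal balance $\epsilon \sim k^{-1/4}$ then yields the claimed $-e_k(Z) \le C\, k^{(2-\beta)/4}$.

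The main obstacle I anticipate is executing the multiscale packing rigorously: confirming that each Euclidean Whitney cube (via Darboux) admits a symplectic packing by balls of total capacity $\sim \ell^2$ covering almost all of its volume, correctly applying the Disjoint Union Axiom over many balls and handling the integer rounding in the partition $\sum k_i = k$, and checking that the optimization in $\epsilon$ stays within the regime of validity of the hypothesis $v_\epsilon \le C_0 \epsilon^\beta$. As a sanity check, a naive single-scale packing with balls of size $\sim \epsilon$ yields the weaker error $\sum_i a_i \le C\, \epsilon^{-2}$ and consequently only the exponent $1/(\beta+2)$ in place of $(2-\beta)/4$; it is precisely the scale-by-scale accounting afforded by the Whitney decomposition that extracts the sharp fractal exponent appearing in the theorem.
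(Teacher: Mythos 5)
Your architecture is the natural one and the exponent bookkeeping checks out: the contrapositive reduction, the Whitney count $N_j \le C\, 2^{j(4-\beta)}$, the resulting capacity error $\sum_i a_i \le C\,\epsilon^{\beta-2}$, and the balance $\epsilon \sim k^{-1/4}$ all fit together, and your observation that a single-scale packing only yields the exponent $1/(\beta+2)$ correctly identifies why the multiscale decomposition is the essential point. (The survey states this theorem without proof, so I am judging the proposal on its own terms.) One minor point first: the Whitney cubes must be \emph{symplectically} standard, so in a general ambient four-manifold you should first fix a finite Darboux atlas and decompose within each chart; the artificial smooth walls this introduces add an extra $O(\epsilon^{-1})$ to $\sum_i \ell_i^2$, which is harmless in the relevant range $\beta \le 1$ (recall $\dim_M(\partial Z)\ge 3$ for a separating boundary, so $\beta$ may be taken less than $1$).

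The serious gap is the per-cube packing. You require each $Q_i$ to be filled by balls of total capacity $a_i \le C\ell_i^2$ \emph{and} of total volume $(1-\delta)\mathrm{Vol}(Q_i)$; since the volume deficit enters your final estimate multiplied by $\sqrt{k}$, you actually need $\delta \le C\epsilon^{\beta} \sim k^{-\beta/4} \to 0$, not merely $\delta$ small and fixed. These two requirements are in tension: if the balls in $Q_i$ have capacities $b_{i,1},\dots,b_{i,n}$, then $\sum_j b_{i,j} \ge \bigl(\sum_j b_{i,j}^2\bigr)/\max_j b_{i,j}$, so a filling by many small balls --- which is all that the elementary ``fill any open set up to volume error $\delta$ by Darboux balls'' fact provides, and all that the survey's own sketch of the leading-order Weyl law uses --- forces the total capacity to blow up relative to $\ell_i^2$ as $\delta \to 0$, and the resulting $\delta$-dependence of your constant destroys the exponent $(2-\beta)/4$ once you substitute $\delta \sim k^{-\beta/4}$. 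To keep $a_i \le C\ell_i^2$ with $C$ independent of $\delta$, every ball must have capacity at least $c\,\ell_i^2$, i.e.\ you need a full filling of the standard cube (equivalently, of the polydisc) by a number of balls that stays bounded as $\delta \to 0$. This is true, but it is a genuinely nontrivial input (packing stability for the polydisc, or an explicit Traynor/McDuff-type full filling); alternatively one can iterate a fixed-fraction filling $O(\log(1/\delta))$ times at the cost of a logarithmic loss that the theorem tolerates. You flagged this step as the main anticipated obstacle; it is a real one, and without one of these inputs the argument as written does not reach the stated exponent.
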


For example, it follows from this that the domains $X_p$, however they are compactified, have boundary with $dim_M(\partial Z) \ge 2 + 2/p$.  Given the above discussion, \cite{cghjems} asked what happens for domains with more regular boundary.  Recently, Edtmair proved the following remarkable result:

\begin{theorem} \cite{edtmair}
Any compact symplectic $4$-manifold with smooth boundary satisfies packing stability.  
\end{theorem}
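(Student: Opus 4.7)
The plan is to combine an asymptotic bound on the ECH subleading term $e_k(X) := c_k(X) - \sqrt{2k\,\text{Vol}(X)}$, coming from the smoothness of $\partial X$, with a constructive packing technique that converts this bound into actual embeddings. The starting point is the principle from the paragraph preceding the theorem: $e_k(X) \to -\infty$ is the obstruction to packing stability, so the goal is to show that smoothness prevents this behavior, and then to extract actual ball packings from its absence.

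For the asymptotic side, the fractal Weyl law in the preceding theorem already yields $|e_k(X)| = O(k^{1/4+\varepsilon})$, because $\dim_M(\partial X) = 3$, and this is $o(\sqrt{k})$. Combining this with Monotonicity and the Disjoint Union Axiom applied to any packing $\sqcup^N B_4(a) {\stackrel{s}\hookrightarrow} X$ by equal balls of capacity $a$ gives $a\sqrt{2kN} + O(1) \le \sqrt{2k\,\text{Vol}(X)} + e_k(X)$, which after squaring rearranges to $Na^2 \le \text{Vol}(X)\bigl(1 + O(k^{-1/4+\varepsilon})\bigr)$; thus the ECH obstruction to filling $X$ by $k$ equal balls vanishes in the limit. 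If a sharper $e_k(X) = O(1)$ were required, I would revisit the proof of Theorem~\ref{thm:cghr} and track the error terms, using smoothness-induced uniform control over the Reeb or characteristic dynamics near $\partial X$.

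The harder half is producing the actual packings, since ECH capacities are known to be a complete obstruction only for ball-into-ball embeddings. Here the plan is to adapt Biran's decomposition strategy to the bordered case: approximate $X$ from inside by a codimension-zero Liouville-type submanifold $X_0$ with $\text{Vol}(X_0) \ge \text{Vol}(X) - \varepsilon$ whose contact-type boundary can be glued to a thin symplectic cap to form a closed symplectic $4$-manifold $\widetilde X$ with $\text{Vol}(\widetilde X) \le \text{Vol}(X) + \varepsilon$; then apply the Biran--Buse--Hind--Opshtein packing stability for closed $4$-manifolds to $\widetilde X$, and finally transport the resulting packings back into $X_0 \subset X$. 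The smoothness of $\partial X$ is used crucially in constructing the Liouville approximation $X_0$ and in controlling the volume of the capping region.

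The principal obstacle is the last step: arranging the ball packings of $\widetilde X$ produced by Biran's theorem to lie entirely inside $X_0$. Symplectic embeddings are famously rigid, so a generic packing of the cap $\widetilde X$ will intersect the capping region and cannot simply be translated away. Overcoming this requires a delicate repositioning mechanism, presumably via Hamiltonian isotopy or a symplectic inflation/squeezing trick that displaces individual balls into $X_0$ while losing only a negligible fraction of the total volume filled. Smoothness of $\partial X$ enters again here, providing enough local flexibility in a collar of $\partial X$ to perform this rearrangement without violating the capacity constraints that the $e_k$ bound makes available.
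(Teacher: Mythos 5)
The survey does not reproduce Edtmair's proof, but it does tell you the logical direction of his argument, and your proposal runs it backwards. The paper states explicitly that the $O(1)$ subleading asymptotics are \emph{deduced from} packing stability, not the other way around; the only implication available in the other direction is the one quoted before the theorem ($e_k \to -\infty$ implies failure of packing stability), whose converse is false as a matter of logic and unproven as a matter of geometry. Your first half therefore does no work toward the theorem: the fractal Weyl law for a smooth boundary gives only the one-sided bound $e_k \ge -O(k^{1/4+\varepsilon})$ (not $|e_k| = O(k^{1/4+\varepsilon})$, since $e'_k = \min(e_k,0)$), and the conclusion you extract --- that ECH capacities do not obstruct a full filling --- produces no embeddings, because ECH capacities are a complete obstruction only for ball packings of a ball (and a few other special targets), not for packings of a general domain. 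Absence of a known obstruction is not a packing.

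That leaves your second half carrying the entire content of the theorem, and it has two unresolved gaps. First, the reduction itself: a compact symplectic $4$-manifold with smooth boundary need not admit an exhausting sequence of Liouville subdomains of nearly full volume (contact type is a nontrivial global condition on the boundary, not something you get by perturbation), and even when the boundary is contact type, symplectic caps of arbitrarily small volume are not generally available. Second, and more seriously, the ``repositioning'' step that you yourself flag as the principal obstacle is exactly the point where symplectic rigidity bites: the packings of the closed manifold $\widetilde X$ produced by Biran's method are anchored to a Donaldson hypersurface and its complementary Lagrangian skeleton, and there is no reason they avoid the cap; displacing a ball of nearly full capacity inside a manifold of barely larger volume is precisely what the capacities you have been computing forbid. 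Invoking ``a Hamiltonian isotopy or inflation/squeezing trick, presumably'' names the difficulty rather than resolving it. A correct proof must construct full fillings of the bordered domain directly, with the smoothness of $\partial X$ entering through the local geometry of the boundary (e.g.\ a normal form for the characteristic foliation permitting an efficient decomposition of a collar into packable pieces), rather than by passing through a closed capped manifold.
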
 

As a consequence of the above theorem, Edtmair is able to deduce $O(1)$ subleading asymptotics in a wide range of situations.  For example,  he is able to show that the $e_k$ are $O(1)$ for any compact smooth domain in $\mathbb{R}^4$;  for a much weaker bound on the ECH subleading asymptotics that holds in some other situations, see \cite{cgs}.  Edtmair is also able to show that PFH spectral invariants always have $O(1)$ subleading asymptotics for any smooth map.

\subsubsection{New normal subgroups}
\label{sec:new}

There is a story for the algebraic structure of homeomorphism groups that is in some sense parallel to that of the previous section.   Recall the normal subgroup $\mathcal{N}$ from \S\ref{sec:nonsimp}.  It is natural to ask whether this is simple.  Considerations of the subleading asymptotics allow us to define some more subgroups.    
A key point is the following.   Recall the group $G = \text{Homeo}_c(D^2,\omega_{std})$ and consider the subgroup $G^{\infty}$ of smooth elements.  By the Weyl Law Theorem~\ref{thm:quasi}, any $g \in G^\infty \cap Ker(Cal)$ has the $f_k$ decaying to $0$.  Considerations of the decay rate allow us to define the following normal subgroup $S'$ of $G^\infty \cap Ker(Cal):$ we let $S'$ denote the subgroups of elements such that $k f_k$ remains bounded.  In fact, this is the entire group.  This follows from the much more general results of Edtmair mentioned above, but the first proof was the following simple argument:

\begin{proof}[Sketch of proof]
It is an easy exercise that homogeneous quasimorpshims are automatically conjugation invariant, so it follows from Theorem~\ref{thm:quasi} that $S'$ 
is a normal subgroup; one can show by considering rotations and computing with \eqref{eqn:control}  that it is non-empty.  However, Banyaga has shown that  the kernel of Calabi on the group of smooth diffeomorphism is simple, so $S' = G^\infty \cap Ker(Cal)$.
\end{proof}

To define a new normal subgroup, the idea is then as follows.   We first want to make sense of extending Calabi in a more canonical way than we did  in \S\ref{sec:nonsimp}. To do this, it is helpful to note that there is a normal subgroup $\text{Hameo}(D^2,\mu_{std}) \subset G$, the group of {\em hameomorphisms}, defined in \cite{ohmueller}; 
these are essentially those elements of $G$ that can be said to have a reasonable Hamiltonian associated to them.  The Calabi homomorphism then extends canonically to this group by integrating the Hamiltonian and we showed in \cite{cghmss} that this is well-defined.
 Now let $S$ be those elements of the kernel of this extension with $O(1)$ subleading asymptotics.

 The above computation for smooth elements shows that $S$ is non-trivial, and we can construct infinite twist maps as in Example~\ref{ex:infinite} which do not have $O(1)$  subleading asymptotics to show that it is proper; these twists are very analogous to the toric domains $X_p$ above and in fact inspired the construction of the $X_p$.  Thus we have constructed a new normal subgroup.   In fact, the question of simplicity of the kernel of Calabi on Hameo was raised by Oh-Mueller in \cite{ohmueller} and this construction resolves their question.  Of course it is natural to ask if this new normal subgroup is simple, but we do not know this; we will return to this in \S\ref{sec:open}.







\subsection{Two or infinity and the Le Calvez - Yoccoz property}
\label{sec:dynamics}
We now explain some further applications to dynamics.  The proofs of these go well-beyond the relevant Weyl laws, and so to some extent will be beyond the scope of these notes, but we want to give some sense of how the Weyl laws come in.   


\begin{theorem} \cite{twoinf, twoinfold}
Let $(Y,\lambda)$ be a closed three-manifold with contact form, such that the contact structure $\xi$ has $c_1(\xi) \in H^2(Y;\mathbb{Z})$ torsion. Then Reeb flow for $\lambda$ has two or infinitely many simple orbits.
\end{theorem}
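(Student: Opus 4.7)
The plan is to extend the strategy used to prove Theorem~\ref{thm:onetwo}, combining the contact Weyl law with the refined control provided by the low-action and bounded-topology lemmas of \S\ref{sec:low}. Assume for contradiction that the Reeb flow admits only finitely many simple orbits $\gamma_1,\ldots,\gamma_N$; by Theorem~\ref{thm:onetwo} we already have $N\ge 2$, so it suffices to rule out the case $3\le N<\infty$.

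First, fix a $U$-sequence $\{\sigma_k\}$ in some summand $ECH(Y,\lambda,\Gamma)$, guaranteed by Remark~\ref{rmk:echswf} since $c_1(\xi)$ is torsion. By Theorem~\ref{thm:contactweyl}, $c_{\sigma_k}(\lambda) \sim \sqrt{k \int_Y \lambda \wedge d\lambda}$. The ``commensurable'' subcase, in which the periods $T_1,\ldots,T_N$ all lie in $T\mathbb{Z}$ for some $T>0$, is immediate by the argument of Theorem~\ref{thm:onetwo}: every action is in $T\mathbb{Z}$, and combining \eqref{eqn:decrease} with this discreteness forces $c_{\sigma_k}(\lambda) \ge kT$, contradicting the $\sqrt{k}$ asymptotics. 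Thus I may assume that at least one pair $T_i,T_j$ is incommensurable.

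Next I would invoke the low-action lemma together with Lemma~\ref{lem:topol} from \S\ref{sec:low}. The latter was stated under the hypothesis $c_1(\xi)=0$; since we only assume $c_1(\xi)$ is torsion, I would extend it either by passing to a finite cover or by choosing a trivialization of $\xi$ away from a $1$-complex and bounding the resulting Chern-type corrections uniformly in terms of the finitely many orbits. The adapted lemmas then produce, from the $U$-sequence, infinitely many $I=2$ currents $C(i)$ with $\mathcal{A}(C(i))<\epsilon$ and $-\chi(C_1(i))\le B$ for universal constants $\epsilon,B$. Since the asymptotic orbits are drawn from the finite set $\{\gamma_1,\ldots,\gamma_N\}$, Gromov/SFT compactness restricts the possible ``moves'' $\sigma_k\mapsto\sigma_{k-1}$ to a bounded alphabet of transitions on the finite-dimensional lattice $\mathbb{Z}^N_{\ge 0}$ that parametrizes ECH generators.

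Finally, I would combine this geometric rigidity with the ECH index formula \eqref{eqn:defnech} and the $J_0$-index estimate from \S\ref{sec:low}. A careful bookkeeping should show that when $N\ge 3$, any candidate infinite $U$-sequence is forced into a low-dimensional affine sublattice of orbit multiplicities, along which either the gradings cannot grow to $+\infty$ or the actions cannot grow like $\sqrt{k}$; either conclusion contradicts Theorem~\ref{thm:contactweyl} applied to $\{\sigma_k\}$. The degenerate case is handled by $C^0$-approximation, using the continuity of ECH spectral invariants discussed in \S\ref{sec:spectral}. The main obstacle I expect is this last combinatorial/index-theoretic step: with $N\ge 3$ the possible arithmetic relations among the rotation numbers of elliptic orbits (and the placement of hyperbolic orbits) are varied, and extracting a uniform contradiction from the interplay of action growth, ECH index, and $J_0$-index along multi-dimensional sublattices requires delicate case analysis absent from the simpler one-orbit situation of Theorem~\ref{thm:onetwo}.
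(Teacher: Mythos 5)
Your proposal diverges fundamentally from the route taken in \cite{twoinf, twoinfold} and sketched in \S\ref{sec:dynamics}: there, the finiteness assumption is used to produce a low-action ECH $U$-map \emph{cylinder} which projects to an annular global surface of section, and the conclusion ``infinitely many orbits'' is then extracted from Franks' theorem on area-preserving annulus maps. Your plan replaces this genuinely dynamical input with a purely spectral/combinatorial contradiction, and the crucial step --- ``a careful bookkeeping should show that when $N\ge 3$ \dots either the gradings cannot grow to $+\infty$ or the actions cannot grow like $\sqrt{k}$'' --- is asserted rather than argued, and I do not believe it can be carried out in the form you describe. Having more orbits makes the action spectrum $\lbrace \sum m_i T_i \rbrace$ \emph{denser}, so incommensurability of the periods creates no arithmetic tension with the $\sqrt{k}$ growth of $c_{\sigma_k}$; nothing in the ``bounded alphabet of transitions on $\mathbb{Z}^N_{\ge 0}$'' picture forces the $U$-sequence into a low-dimensional sublattice. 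The counting argument that does exist (and is part of \cite{twoinfold}) runs differently: when every simple orbit is elliptic the ECH index is always even, the differential vanishes, each generator is a nonzero class whose spectral invariant equals its action, and comparing the $\sim L^N$ generators of action $\le L$ with the boundedness of $ECH$ in each (even) grading $\le O(L^2)$ forces $N\le 2$. That argument collapses as soon as positive hyperbolic orbits are present (the differential need not vanish, and generators no longer represent distinct classes), and handling that case is precisely the hard part of \cite{twoinfold}; your index/$J_0$ bookkeeping does not address it.

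The second serious gap is your treatment of the degenerate case by ``$C^0$-approximation, using the continuity of ECH spectral invariants.'' Continuity of spectral invariants lets one define $c_\sigma(\lambda)$ for degenerate $\lambda$, but it does not transfer the conclusion ``two or infinitely many orbits'' through a limit: the approximating nondegenerate forms $\lambda_n$ generically have infinitely many orbits regardless of what $\lambda$ does, so no contradiction survives the limit. This is exactly why the degenerate case required the separate and much later argument of \cite{twoinf}, whose whole point is that the global surfaces of section for $\lambda_n$ (not their spectral invariants) persist in the limit. If you want to pursue a proof along the lines you propose, you should (i) isolate the all-elliptic nondegenerate case, where the counting argument above is complete; (ii) confront the positive hyperbolic case directly rather than via ``bookkeeping''; and (iii) accept that some dynamical ingredient in the spirit of a surface of section plus Franks' theorem appears unavoidable once $\lambda$ is allowed to be degenerate.
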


This answers as a special case a longstanding conjecture of Hofer-Wysocki-Zehnder \cite{hwz}.  The proof is quite intricate so we will only say a few words, mainly trying to convey the overall strategy and explain where the Weyl law comes in.  

\begin{proof}[Remarks on the proof] The idea is to show that if there are only finitely many simple orbits, then the flow has an annular ``global surface of section" (GSS): this is an annulus with boundary on Reeb orbits, whose interior is embedded and transverse to the flow, and such that every flow line hits the surface both forwards and backwards in time, and the idea of finding global surface of sections via pseudoholomrphic curve techniques was pioneered by Hofer-Wysocki-Zehnder \cite{hwz}.   Given such a GSS, one can define a {\em first return map}, which maps a point to its next intersection under the flow with the GSS.   If one can arrange in addition that this map is smooth and area-preserving, then the theorem follows from a celebrated result of Franks \cite{franks}, which says that an area-preserving map of an annulus has infinitely many periodic points if it has any at all.        

To find this GSS, the general strategy is to approximate $\lambda$ with nondegenerate contact forms $\lambda_n$, show that each $\lambda_n$ has an annular GSS, and show that one can take a limit.  The GSS for each $\lambda_n$ is obtained by projecting an ECH $U$-map curve.   It is a remarkable fact that one can actually take a limit and keep a GSS in this situation; this is beyond the scope of these notes and it makes use of a kind of ECH miracle using the partition conditions; we refer the reader to \cite{twoinf} for a fuller discussion.  

The main place where the Weyl law comes in is in finding the GSS for each $\lambda_n$.  The proof of this builds on the discussion of low action curves of controlled topology in \S\ref{sec:low}.   There, some first considerations gave curves $C$ with some sort of bound.
However, in our proof, it turns out to be crucial to need cylinders.  It is not too hard to get curves with $- \chi(C) \le 2$ by an argument similar to the argument in \S\ref{sec:low}.  Getting cylinders, however, is much more delicate.  The basic idea is that there is an additional term in $J_0$ involving 
$\mathbb{R}$-invariant cylinders.  If there are at least two such cylinders for ``most" $U$-map curves, then the argument in \S\ref{sec:low} still works without much change to get cylinders.  Thus it suffices to argue that there are $U$-map curves with at least two $\mathbb{R}$-invariant cylindrical components.  The proof of this uses a new quantity called the ``score", which is tuned to the ``ECH partition conditions"; all of this is beyond the scope of these notes and we refer the reader to \cite{twoinf}.

Once one has the desired low-action cylinder counted by the ECH $U$-map for a $\lambda_n$, the basic idea is that it is not too hard to show that the moduli space of such cylinders is compact.  At the same time, an intersection theory argument, using the fact that $J$-holomorphic curves must intersect positively shows that no two distinct curves in the moduli space intersect.  Thus the curves sweep out the entire symplectization, and one can argue that since $J$ is admissible, any one of them projects to a GSS.   For the details, see \cite{twoinf}.
\end{proof}

In a different work, we show that the case of two orbits has substantial rigidity:

\begin{theorem} \cite{twoorbits}
Let $(Y,\lambda)$ be a closed three-manifold such that the associated Reeb flow has exactly two simple Reeb orbits.  Then $
\lambda$ is nondegenerate, $Y$ is a lens space, both orbits bound global surfaces of section, and the period of the two orbits satisfy $T_1T_2 = \frac{ \int _Y \lambda \wedge d \lambda}{ | \pi_1(Y) | }$.
\end{theorem}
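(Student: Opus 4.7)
My plan extends the scheme of Theorem~\ref{thm:onetwo} (two orbits) by exploiting the lattice structure available when there are exactly two orbits. Write $T_1,T_2$ for the periods of $\gamma_1,\gamma_2$. Assuming nondegeneracy and that both orbits are elliptic (to be justified below), every ECH generator has the form $\alpha_{m,n}=\{(\gamma_1,m),(\gamma_2,n)\}$ with action $mT_1+nT_2\in\Lambda:=\{mT_1+nT_2:m,n\in\mathbb{Z}_{\ge 0}\}$. I would first show, by computing parities of the ECH index exactly as in the proof of Proposition~\ref{prop:echell} (suitably adjusted for the topology of $Y$), that the differential vanishes and each generator represents a nonzero class; the chain complex then decomposes by homology class $\Gamma\in H_1(Y)$, and within each $\Gamma$ the nonzero classes biject with the sub-lattice $\Lambda_\Gamma\subset\Lambda$ cut out by $m[\gamma_1]+n[\gamma_2]=\Gamma$ in $H_1(Y)$.

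Next I would combine the Weyl law Theorem~\ref{thm:contactweyl} with lattice counting. Picking a $\Gamma$ with $c_1(\xi)+2\mathrm{PD}(\Gamma)$ torsion, the $U$-sequence of Remark~\ref{rmk:echswf} walks through $\Lambda_\Gamma$ in order of action (using that the $U$-map strictly decreases the spectral invariant, as in the proof of Theorem~\ref{thm:onetwo}). The $k$-th smallest element of $\Lambda_\Gamma$ is asymptotically $\sqrt{2k|H_1(Y)|\, T_1T_2}$, while Theorem~\ref{thm:contactweyl} yields $c_{\sigma_k}^2\sim 2k\int_Y\lambda\wedge d\lambda$; matching gives
\[
\int_Y\lambda\wedge d\lambda\ =\ |H_1(Y)|\cdot T_1T_2.
\]
Finiteness $|H_1(Y)|<\infty$ is forced (else $\Lambda_\Gamma$ would be too sparse to support the $\sqrt{k}$-growth along the $U$-sequence), so $Y$ is a rational homology sphere with $\pi_1(Y)$ abelian, so $|H_1(Y)|=|\pi_1(Y)|$.

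Hyperbolic orbits and degeneracy are ruled out as preliminary steps: a hyperbolic $\gamma_i$ restricts one of $m,n$ to $\{0,1\}$, making $|\Lambda\cap[0,R]|=O(R)$, incompatible with the Weyl law's $\sqrt{k}$-growth along the $U$-sequence; degeneracy is ruled out either by arguing that a degenerate iterate bifurcates into extra simple orbits under a $C^0$-small nondegenerate perturbation (contradicting the hypothesis), or by passing the lattice count through an approximation using continuity of ECH spectral invariants. Having identified $Y$ as a rational homology sphere carrying a nondegenerate Reeb flow with two elliptic orbits satisfying the period identity above, I would classify it as a lens space by a model comparison in the spirit of the end of the proof of Theorem~\ref{thm:recog}, matching the flow to the quotient of the irrational-ellipsoid flow by a free finite cyclic action via Moser. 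Finally, the global surfaces of section come from the pseudoholomorphic curve technology of \cite{twoinf} outlined in Section~\ref{sec:dynamics}: low-action, low-genus ECH $U$-map curves project to embedded annuli transverse to the flow, bounded by $\gamma_1$ and $\gamma_2$, with positivity of intersection and scarcity of asymptotics forcing the moduli spaces to be compact and to sweep out $Y$. I expect the main obstacle to be establishing vanishing of the ECH differential in sufficient generality; in the ellipsoid case this is an elementary parity argument, but for general lens-space topology the parity of the $c_\tau+Q_\tau$ terms must be tracked carefully, and an alternative route via Taubes' isomorphism and the known structure of $\widehat{HM}$ on rational homology spheres may be required to secure the asymptotic saturation of $\Lambda_\Gamma$ by the $U$-sequence.
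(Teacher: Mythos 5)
Your overall architecture for the quantitative part --- vanishing of the differential for two elliptic orbits, lattice-point counting against Theorem~\ref{thm:contactweyl} to extract $\int_Y\lambda\wedge d\lambda=|H_1(Y)|\,T_1T_2$, and a growth-rate exclusion of hyperbolic orbits --- does match the actual strategy for those pieces. But two steps have genuine gaps. First, nondegeneracy, which is the hardest part of the theorem, is not secured by either of your proposed routes: a $C^0$-small nondegenerate perturbation produces a \emph{different} contact form, so extra simple orbits born in a bifurcation cannot contradict a hypothesis that concerns $\lambda$ itself; and ``passing the lattice count through an approximation'' is not yet an argument, since for a degenerate form there is no integer grading to count against. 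The mechanism that actually works is to introduce an approximate ECH index for the possibly degenerate form and use the Weyl law to pin down the rotation numbers $\theta_1,\theta_2$ of the two orbits (they are forced to satisfy explicit identities in terms of $T_1$, $T_2$ and $d=|H_1(Y)|$) and to show they are irrational; irrationality of the rotation numbers is precisely what makes every iterate of an elliptic orbit nondegenerate. You do not identify this step, and it is the heart of the proof.

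Second, your route to ``lens space'' fails as stated: a rational homology sphere need not have abelian fundamental group (the Poincar\'e sphere has $|H_1|=1$ and $|\pi_1|=120$), so $|H_1(Y)|<\infty$ does not give $|H_1(Y)|=|\pi_1(Y)|$; and the Moser/circle-action comparison from the proof of Theorem~\ref{thm:recog} is unavailable here because you cannot show that all orbits are closed --- only two are. In the actual argument the topological identification is \emph{downstream} of the global surfaces of section: the low-action, controlled-topology $U$-map curves of \S\ref{sec:low} produce pages bound by the two orbits, exhibiting $Y$ as two solid tori glued along a torus, i.e.\ a genus one Heegaard splitting with $\gamma_1,\gamma_2$ as core circles, whence $Y$ is a lens space and $|\pi_1|=|H_1|=d$. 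So the GSS construction, which you treat as a final add-on, is logically prior to the lens-space conclusion. (You also implicitly assume the $U$-sequence saturates $\Lambda_\Gamma$ in order of action, which needs the rank-one-in-each-even-degree structure of $\widehat{HM}$ in large gradings, and an argument that $b_1(Y)=0$; your sparsity remark does not give the latter when the classes $[\gamma_i]$ are torsion.)
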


For brevity, we will not say much about the proof; the Weyl law is used to show that the contact form is nondegenerate, which is the hardest part, by determining the rotation numbers of the orbits via their contribution to the grading; a kind of approximate ECH index in the degenerate case is useful for this. 


The other dynamical result that we want to briefly recall goes beyond periodic orbits.  Recall that as explained above the union of the periodic orbits is generically dense.  In the non-generic situation, this certainly does not hold; in the irrational ellipsoid example in Proposition~\ref{prop:echell}, for example, there are exactly two periodic orbits.   On the other hand, periodic orbits are a special kind of {\em non-trivial invariant set}: this is a closed invariant set that is required to be proper and non-empty.  We showed the following:

\begin{theorem} \cite{cgp}
Let $(Y,\lambda)$ be a closed contact $3$-manifold such that $c_1(\xi) \in H^2(Y; \mathbb{Z})$ is torsion.  Then the union of the non-trivial invariant sets is dense for the Reeb flow.
\end{theorem}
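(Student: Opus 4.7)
The plan is to combine the Irie argument for the closing lemma (proof of Theorem~\ref{thm:closing}) with a compactness argument, producing a non-trivial invariant set as the Hausdorff limit of periodic orbits of small perturbations of $\lambda$. Fix a nonempty open set $U \subset Y$, a relatively compact open subset $U' \subset U$, and a nonnegative smooth function $f$ supported in $U'$ with $f \not\equiv 0$. Set $\lambda_t = e^{tf}\lambda$ for $t \in [0,1]$; a direct computation gives $\lambda_t \wedge d\lambda_t = e^{2tf}\lambda \wedge d\lambda$, so the volume is strictly increasing in $t$. Fix a $U$-sequence $\{\sigma_k\}$ as in Remark~\ref{rmk:echswf}.

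Next I mimic Irie's perturbative argument to produce short periodic orbits meeting $U'$. Using Theorem~\ref{thm:contactweyl} together with the monotonicity of $c_{\sigma_k}$ under $\lambda_t \le \lambda_{t'}$, I fix a single sufficiently large index $k$ for which $c_{\sigma_k}(\lambda_t)$ must strictly increase on every subinterval $[0,\epsilon]$. By $C^0$-continuity of $c_{\sigma_k}$ in $t$ and the fact that the period spectrum of $\lambda_t$ has measure zero, this yields $t_\epsilon \in (0, \epsilon)$ and a simple Reeb orbit $\gamma_\epsilon$ of $\lambda_{t_\epsilon}$ meeting $U'$, with period $T_\epsilon \le c_{\sigma_k}(\lambda_\epsilon) \le c_{\sigma_k}(\lambda_0) + 1$. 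Now I let $\epsilon \to 0$: the forms $\lambda_{t_\epsilon}$ converge to $\lambda$ smoothly, so the Reeb vector fields and flows converge uniformly on compact time intervals. After passing to a subsequence, $T_\epsilon \to T$ and the $\gamma_\epsilon$ converge in Hausdorff distance to a nonempty compact set $K \subset Y$. Uniform convergence of the flows implies $K$ is invariant under the Reeb flow of $\lambda$; $K$ meets $\overline{U'} \subset U$; and since $K$ is a Hausdorff limit of curves of uniformly bounded length, it has vanishing three-dimensional measure, so $K \ne Y$. This produces a non-trivial invariant set meeting $U$, as desired.

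The hard part will be ruling out the degenerate collapse $T_\epsilon \to 0$ with $\gamma_\epsilon$ contracting to a single point $p$: the Hausdorff limit $\{p\}$ would fail to be Reeb-invariant since $R_\lambda$ is nowhere zero. Preventing this requires a quantitative lower bound on the periods $T_\epsilon$, which is the key new ingredient beyond Irie's original setup. One natural approach is to exploit that the Irie mechanism produces $\gamma_\epsilon$ as an orbit whose period realizes a value of the period spectrum of $\lambda_{t_\epsilon}$ that is at least the spectral jump $c_{\sigma_k}(\lambda_{t_\epsilon}) - c_{\sigma_{k-1}}(\lambda_{t_\epsilon})$; for the fixed index $k$ this spectral gap is bounded below by a positive constant as $\epsilon \to 0$, which would give the needed uniform $T_\epsilon \ge c > 0$. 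A complementary approach is a direct dynamical estimate: a Reeb orbit of $\lambda_{t_\epsilon}$ supported in a region of uniformly bounded vector field norm cannot have period smaller than the diameter of its image divided by $\sup |R|$, so as long as $\gamma_\epsilon$ does not shrink to a point in $U'$, its period is bounded below. Once such a lower bound on $T_\epsilon$ is in hand, the compactness argument above genuinely produces a non-trivial invariant set of $\lambda$ meeting $U$, completing the proof.
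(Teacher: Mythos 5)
There is a genuine gap at the crux of your argument: the claim that you can ``fix a single sufficiently large index $k$ for which $c_{\sigma_k}(\lambda_t)$ must strictly increase on every subinterval $[0,\epsilon]$.'' The Weyl law (Theorem~\ref{thm:contactweyl}) only guarantees that for each \emph{fixed} $t>0$ there is \emph{some} $k$ (depending on $t$) with $c_{\sigma_k}(e^{tf}\lambda) > c_{\sigma_k}(\lambda)$; since the volume gap $\int e^{2tf}\lambda\wedge d\lambda - \int \lambda\wedge d\lambda$ shrinks to $0$ as $t\to 0$, the index needed to detect it blows up, and for a fixed $k$ the nondecreasing function $t\mapsto c_{\sigma_k}(\lambda_t)$ may well be constant near $t=0$. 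Consequently your period bound $T_\epsilon \le c_{\sigma_k}(\lambda_0)+1$ evaporates: the orbit produced by the Irie mechanism at scale $\epsilon$ only has period controlled by a spectral invariant of index $k(\epsilon)\to\infty$, i.e. by a quantity of order $\sqrt{k(\epsilon)}$. This is not a technicality. The irrational ellipsoid of Proposition~\ref{prop:echell} has exactly two periodic orbits, so for a small ball $U$ disjoint from both, any periodic orbit of $e^{t_\epsilon f}\lambda$ through $U$ must have period tending to $+\infty$ as $\epsilon\to 0$. Without a uniform length bound, the Hausdorff limit $K$ of the $\gamma_\epsilon$ need not be proper --- a sequence of longer and longer orbits can equidistribute, so the ``vanishing three-dimensional measure'' step fails and $K$ could be all of $Y$. (By contrast, the collapse $T_\epsilon\to 0$ that you flag as the hard part is not an issue: periods of Reeb orbits for a $C^2$-compact family of contact forms on a closed manifold are uniformly bounded below, and in any case Hausdorff limits of orbits are invariant regardless of the periods. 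Also, the proposed fix via the gap $c_{\sigma_k}-c_{\sigma_{k-1}}$ cannot give a constant lower bound: by the Weyl law $c_{\sigma_k}\sim\sqrt{k}$, so consecutive gaps are $O(k^{-1/2})$ on average.)

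For comparison, the proof in \cite{cgp} does not perturb $\lambda$ and does not pass through periodic orbits at all. It uses the Weyl law as in \S\ref{sec:low} to produce $J$-holomorphic curves in the symplectization with action tending to $0$ and controlled topology (in the torsion $c_1$ setting), and then proves a new compactness theorem in the spirit of the Fish--Hofer feral curve theory: suitable limits of these low-action, topologically controlled curves detect closed, proper, non-empty invariant sets near any prescribed point. The ``properness'' that your Hausdorff-limit argument cannot secure is exactly what the low-action hypothesis and the curve compactness supply there. If you want to salvage a perturbative scheme, you would need some a priori control (uniform in $\epsilon$) on the orbits produced --- precisely the kind of quantitative input that the holomorphic-curve approach is designed to provide and that Irie's argument alone does not.
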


We prove a similar result for surfaces.  These results are versions of a celebrated result of Le Calvez - Yoccoz for any homeomorphism (not necessarily area-preserving) of $S^2$ \cite{lcy}.  The proof, which uses the ``feral curve theory" of Fish-Hofer \cite{fh} and some further refinements due to Prasad  is again beyond the scope of these notes.   Where Weyl laws come in is in finding curves of controlled topology with actions going to $0$, along the lines of the arguments in \S\ref{sec:low}, generalized to the torsion or surface cases.  Given these curves, one proves a new compactness theorem, extracting nontrivial invariant sets in the limit.  For the details we refer the reader \cite{cgp}.



\subsection{The topological invariance of helicity}
\label{sec:es}

The {\em helicity} of an exact volume preserving flow $X$ on a three-manifold with volume-form $(Y,\mu)$ is the integral $\int_Y \alpha \wedge d \alpha,$ where $d \alpha = \iota_X \mu$; that such an $\alpha$ exists is the definition of the flow being exact.
The helicity is 
the fundamental conserved quantity under the action of $\text{Diff}(Y,\mu)$. 
 In the $70s$, Arnold asked if helicity extends to topological flows and is 
invariant under the action of the group of volume and orientation preserving homeomorphisms.

Helicity is connected to the Calabi invariant as follows, as shown by Gambaudo-Ghys \cite{gg}:
Given $\psi \in \text{Diff}_c(D^2,\omega)$, let $Y$ denote the mapping torus as in \eqref{eqn:mapping}.
Then $\partial_t$ generates an exact volume preserving flow, and Gambaudo-Ghys show that its Calabi invariant is exactly its helicity.
Starting from this, and using the explained extension of Calabi, Edtmair-Seyfaddini recently answered Arnold's question for all flows without rest points in a remarkable work.   To do this, they construct a ``universal extension" of Calabi, taking values in $Homeo_c(D^2,\mu_{std})^{ab}$, where the superscript denotes the abelianization; this abelianization is non-trivial by arguments similar to the arguments we explained above to prove the Simplicity Conjecture, and one can think of it as a replacement for the range of the Calabi homomorphism on diffeomorphisms, which one normally regards as $\mathbb{R}$ but which is also $\text{Diffeo}_c(D^2,\mu_{std})^{ab}$ by Banyaga's theorem from above.  This is more canonical than the extensions previously discussed in \S\ref{sec:nonsimp}, which depend on a choice of link spectral invariants.  This facilitates keeping track of how inserting a mapping cylinder  like the one above into the flow changes the helicity; the fact that Calabi is homomorphism is crucial for this.  For more details, we refer the reader to \cite{es}.






\section{Questions}
\label{sec:open}
We close with a few open questions related to possible directions for future developments of symplectic Weyl laws.


\begin{question}
What more can be said about the subleading asymptotics?
\end{question}

As we reviewed in many cases these are now known to be $O(1)$.  One would  like to know if they extract a geometrically meaningful quantity in analogy with the volume extracted from the leading order term.  In the case of Weyl's original law, conjecturally one expects to extract the perimeter and this has been proven in the generic case \cite{weylivrii}.
 Hutchings has a heuristic
 \cite{hutchingsruelle} 
recovering the 
  ``Ruelle" invariant, measuring the average rotation of the linearized flow, in some cases.  Similar questions can be asked about the other Weyl laws. 
In the case of toric domains, \cite{cgmm} shows that one can recover the affine perimeter in the convex case, without any genericity required.    

\begin{question}
Is there anything like the above story in higher dimensions?
\end{question}

In fact for many of the kinds of applications we studied above, there are analogous questions one can ask in higher dimensions, but much less is known.  For example, the Weinstein conjecture remains open in higher dimensions;  it is open whether a star-shaped domain must have $n$ Reeb orbits; it is open whether or not packing stability can fail; and, the symplectic ball packing numbers of a ball are also not fully known.
 

 
\section*{Acknowledgments.}

My understanding of the mathematics discussed here has profited immensely from conversations with a number of mathematicians.   First and foremost, I want to thank all of my collaborators on the works surveyed here, who have taught me so much.  I also want to thank Oliver Edtmair, Richard Hind, Helmut Hofer, Michael Hutchings, Rohil Prasad and Rich Schwartz for very helpful feedback on a draft of these notes.  These notes are dedicated to my father, Martin Gardiner,  a devoted parent and scholar who passed away in $2023$.

\end{document}